\documentclass[10pt,a4paper]{amsart}%

\usepackage{MyCommA}
\usepackage[final]{pdfpages}
\newcommand{\RHom}{\operatorname{RHom}}

\newcommand{\simpAr}[2][r]{%
\ar@{}[#1]|-*[@]_{#2}%
}
\renewcommand{\colorlinks}{true}
\renewcommand{\linkcolor}{lblue}
\renewcommand{\citecolor}{lblue}
\renewcommand{\urlcolor}{dblue}
\renewcommand{\linkbordercolor}{red}
\renewcommand{\citebordercolor}{green}
\renewcommand{\urlbordercolor}{cyan}
\hypersetup{colorlinks=\colorlinks,linkbordercolor=\linkbordercolor, linkcolor=\linkcolor, citecolor=\citecolor, urlcolor=\urlcolor,urlbordercolor=\urlbordercolor,citebordercolor=\citebordercolor}%
\newcommand{\CH}{\mathfrak{X}}

\renewcommand{\bfM}{\mathbf{M}}
\renewcommand{\bfG}{\mathbf{G}}
\renewcommand{\bfH}{\mathbf{H}}

\newcommand{\Eitan}[1]{{{#1}}}
\newcommand{\EitanB}[1]{{{#1}}}
\newcommand{\Rami}[1]{{{#1}}}
\newcommand{\RamiD}[1]{{{#1}}}
\newcommand{\RamiE}[1]{{{#1}}}
\newcommand{\RamiF}[1]{{{#1}}}
\newcommand{\RamiG}[1]{{{#1}}}
\newcommand{\RamiH}[1]{{{#1}}}

\newcommand{\EitanG}[1]{{{#1}}}
\newcommand{\EitanH}[1]{{{#1}}}
\newcommand{\RamiK}[1]{{{#1}}}
\newcommand{\NextVer}[1]{}



\usepackage{amsmath}

\newcommand{\zerodel}{.\kern-\nulldelimiterspace}

\begin{document}

\author{Avraham Aizenbud}
\address{Avraham Aizenbud,
\RamiE{The Incumbent of Dr. A. Edward Friedmann Career Development Chair in Mathematics,}
Faculty of Mathematical Sciences,
Weizmann Institute of Science,
76100 Rehovot, Israel }
\email{aizenr@gmail.com}
\urladdr{http://www.wisdom.weizmann.ac.il/~aizenr/}
\author{Eitan Sayag}
\address{Eitan Sayag, Department of Mathematics, Ben-Gurion University of the Negev, ISRAEL}
\email{sayage@math.bgu.ac.il}
\urladdr{http://www.math.bgu.ac.il/~sayage/}
\title{On the Cohen-Macaulay Property of Spherical Varieties}
\date{\today}

\keywords{Branching \RamiG{laws}, Homological multiplicities, spherical spaces}
\subjclass{22E45,20G25}
%
%
%
%
%
%
%
%

\title[Homological multiplicities]{Homological multiplicities in representation theory of $p$-adic groups}
\begin{abstract}
We study homological multiplicities of spherical varieties of reductive \RamiK{group $G$} over \RamiK{a} $p$-adic field \RamiK{$F$}. Based on Bernstein\RamiE{'s} decomposition of the category of smooth representations of a $p$-adic group, we introduce a sheaf that measures these multiplicities.

We show that these multiplicities are finite \EitanB{whenever the usual mutliplicities are finite}, in particular this holds for symmetric \RamiD{varieties}, conjectured for \RamiD{all} spherical \RamiD{varieties} and known for \RamiD{a large class of}  spherical \RamiD{varieties}. Furthermore, we show that the Euler-Poincar\'e characteristic is constant in families induced from \EitanH{admissible} 
representations
of a Levi $M.$
In the case when $M=G$ we compute these multiplicities more explicitly.

\end{abstract}
\maketitle
\tableofcontents

\section{Introduction}

Let $\bf{G}$ be a \RamiF{connected} reductive group defined over a non-Archimedean local field $F$. 
Let $X$ be \RamiE{an} $F$-spherical  transitive $G=\bfG(F)$-space. That is, a $G$-transitive space which \RamiE{has} finitely many $P_0$ orbits,
where $P_0$ is a minimal parabolic
subgroup of $G$. Let $\Sc(X)$ be the space of Schwartz (i.e. locally constant compactly supported) functions on $X$. Harmonic analysis on $X$ requires a deep understanding of the multiplicity spaces  $\Hom_{G}(\Sc(X),\pi)$, where $\pi$ is \RamiE{an} admissible representation of $G.$ A considerable amount of research was dedicated to the determination of the dimension of these multiplicity spaces (e.g. \cite{GK,Sha,JR,Pra_m,vD,AGRS,AGS,HM,AG_HC,AG_AMOT,SZ,GGP,OS}) and to \RamiE{the} construction of bas\RamiE{es} to these spaces (e.g. \cite{vB,OS_e,GSS}).

Assume that for any admissible representation $\pi$ of $G$, the multiplicity \Eitan{$$m_{X}(\pi):=\dim\RamiK{\Hom_G}(\Sc(X),\pi)$$ is finite.} This is known to be true in a number of cases (see \cite{Del,SV}), including the case of symmetric spaces, and conjectured to be true for any $F$-spherical space.

In this paper we study the homological version of those multiplicities:
$$m^{i}_{X}(\pi)=\dim\RamiK{\Ext^{i}_G}(\Sc(X),\pi), \RamiG{\text{ for } i\geq 0}.$$

The first result of this paper is the following:
\begin{introtheorem}[See Theorem \ref{thm.f.mul} {for  \RamiE{a} more general result}]\label{thm:intro.f.mul}
Let $\pi$ be an admissible representation of $G.$ Under the assumption above, the homological multiplicities
  $m^{i}_{X}(\pi)$
are finite for every $\RamiG{i\geq 0}.$

\end{introtheorem}
Special cases of this result were studied in \cite[\S5]{Pra} using a different method.

The category $\mathcal{M}(G)$ of smooth representations of $G$ is of finite homological dimension (\cite[Theorem 29]{BerLec}).
Thus, we can define
$$e_{X}(\pi):= EP_{G}(\Sc(X),\pi):=\sum (-1)^i \dim\RamiK{\Ext^i_G}(\Sc(X),\pi).$$
More generally we will consider
$$p_{X}(\pi):= P_{G}(\Sc(X),\pi):=\sum  \dim\RamiK{\Ext^i_G}(\Sc(X),\pi) t^i\in \Z[t].$$
The second result of this paper is the following:
\begin{introtheorem}[See Theorem \ref{thm:ep} \Rami{for more general result}]\label{thm:intro.ep}
Let $\bfM\subset \bfG$ be a Levi subgroup of $\bfG$. Let $\rho$ be \RamiG{an irreducible cuspidal} representation of $M=\bfM(F).$ For an unramified character $\chi$ of $M$, let $\pi_\chi:=i_M^G(\rho \cdot \chi)$ be the normalized parabolic induction. 
 Then $e_{X}(\pi_\chi)$ is independent of $\chi$.
\end{introtheorem}


In the case \RamiK{where} $\bfM=\bfG$ we also prove:
\begin{introtheorem}[ \RamiD{See \S\ref{sec:cusp}}]\label{thm:intro.cusp}
Let $\rho$ be \RamiG{an irreducible cuspidal} representation of $G$. Let $\fX_G$ be the torus of unramified characters of $G$. Then there is a smooth subvariety $\fX' \subset \fX_G$  \RamiG{such that} $$p_X(\chi\rho)=\begin{cases}0 & \text{if }\chi\notin\fX' \\
(1+t)^{\dim \fX_G-\dim \fX'}& \text{if }\chi\in\fX' \\
\end{cases}$$
\end{introtheorem}
In order to prove  Theorems \ref{thm:intro.ep} and \ref{thm:intro.cusp} above we introduce, for any cuspidal data (i.e. a Levi subgroup $\bfM<\bfG$ and \RamiG{an irreducible cuspidal} representation $\rho$ of $M=\bfM(F)$%
)  a \EitanG{coherent} sheaf $\cF_{X}((M,\rho))$ over the torus $\fX_M$ of unramified characters of $M$. We prove:
\begin{introtheorem}[See Theorem \ref{lem: mutl_via_fiber} and Theorem \ref{thm: StrCuspShf} \Rami{for more general results}]\label{thm:intro.mutl_via_fiber}
$ $
\begin{enumerate}
\item\label{thm:intro.mutl_via_fiber:1} In the notations of Theorem \ref{thm:intro.ep} we have
$m^i_{X}(\pi_\chi)=\dim\Ext^i_{O_{\fX_M}}(\cF_{X}((M,\rho)),\delta_\chi),$ where $\delta_\chi$ is the skyscraper sheaf at $\chi\in \fX_M$
\Eitan{and $O_{\fX_M}$ is the sheaf of regular functions on the torus of unramified characters $\fX_M$}
\item\label{thm:intro.mutl_via_fiber:2} In the notations of Theorem \ref{thm:intro.cusp} we have $\cF_{X}((G,\rho))=i_*(\cL)$, where \RamiG{$X'$ is as above and} $i:\fX'\to\fX_G$ is \Eitan{the embedding} and $\cL$ is a locally free sheaf over $\fX'$
\end{enumerate}
\end{introtheorem}
\RamiH{
Infact, for every $V\in \cM(G)$ we introduce a \EitanG{quasi-coherent} sheaf $\cF_{V}((M,\rho))$ which coincide with $\cF_{X}((M,\rho))$ when $V=\Sc(X)$. We prove analogous results for this sheaf.

In Appendix \ref{sec:app} we prove a generalization of theorem \ref{thm:intro.ep} where we replace the cuspidal representation $\rho$ with a general admissible representation $\tau$. For this we need to generalize the construction of the sheaf $\cF_{X}$ on $\fX(M)$ to the case when the representation of $M$ is not cuspidal. We are thus led to introduce an object of the derived category of sheaves on $\fX(M)$, denoted $\cG_{V}(M,\tau)$, that plays a similar role to the that of $\cF_{V}((M,\rho))$.

\begin{introtheorem}[See Appendix \ref{sec:app} for a more general result]\label{thm:intro.der_mutl_via_fiber}
$ $
Let $\bfM\subset \bfG$ be a Levi subgroup of $\bfG$. Let $\tau$ be {an admissible} representation of $M=\bfM(F),$ and let $\chi$ be an unramified character  of $M$. Let $V\in\cM(G)$. \EitanG{Then the following holds:}
\begin{enumerate}
\item\label{thm:intro.der_mutl_via_fiber.1} $\RHom_{G}(V,i_M^G(\chi \tau))\cong \RHom_{O(\RamiD{\CH_M})}(\cG_{V}(M,\tau), \delta_{\chi}).$
\item\label{thm:intro.der_mutl_via_fiber.2} If $\tau$ is cuspidal then $\cG_{V}(M,\tau)\cong\cF_{V}((M,\tau))$.
\item\label{thm:intro.der_mutl_via_fiber.3}  If $V=\Sc(X)$ \EitanG{then} $\cG_{V}(M,\tau)$ is perfect.
\item\label{thm:intro.der_mutl_via_fiber.4}  $e_{X}(i_M^G(\chi \tau))$ is independent of $\chi$.
\end{enumerate}
\end{introtheorem}
Our approach to Theorem \EitanG{\ref{thm:intro.der_mutl_via_fiber}} is based on a slight modification of the the notion of \RamiK{multiplicities}. Namely we replace $\Hom$ by the tensor product. The relation between the two notions of  multiplicity is given in Corollary \ref{cor:tens.mult}
}

\subsection{Structure of the paper} In \S\ref{sec:prel} we give  a brief overview  on homological multiplicities and recall few properties of the Bernstein decomposition and Bernstein center.
In \S\ref{sec:hom.fin}  we prove Theorem \ref{thm:intro.f.mul}. In \S\ref{sec:mult.shif} we prove Theorem \ref{thm:intro.mutl_via_fiber}\eqref{thm:intro.mutl_via_fiber:1}. In \S\ref{sec:eu} we prove Theorem \ref{thm:intro.ep}. In \S\ref{sec:cusp}  we prove theorem \ref{thm:intro.mutl_via_fiber}\eqref{thm:intro.mutl_via_fiber:2}
\RamiD{and deduce} Theorem \ref{thm:intro.cusp}. \RamiH{
In Appendix \S\ref{sec:ten.mult} we study the relation between tensor multiplicity and usual multiplicity.
In Appendix \S\ref{sec:app} we introduce the object $\cG_{V}(M,\tau)$, and prove Theorem \ref{thm:intro.der_mutl_via_fiber}.}
\subsection{Acknowledgments}
We thank Dipendra Prasad for a number of inspiring lectures on Ext braching laws. We also thank
Dmitry Gourevitch for many useful discussions \RamiE{and Yotam Hendel for his careful proof reading}. 

This project was conceived while both authors were in Bonn as part of the program ``Multiplicity problems in harmonic analysis" in the Hausdorff Research Institute for Mathematics.
\RamiE{A.A. was partially supported by ISF grant 687/13,
and a Minerva foundation grant.}

\section{Preliminaries}\label{sec:prel}
\subsection{Homological multiplicities}\label{ssec:prel.hom}
Fix $H \subset G$ a closed subgroup.
\Eitan{As in the case of multiplicity spaces, one can define  homological multiplicity  in several ways.}  The relation between  them  is given by the following:
\begin{proposition}[{\cite{Cas},\cite[Proposition 2.5]{Pra}}]
Let $\pi$ be an admissible representation of $G$ and $\eta$ a character of $H.$ Then
$$\RamiK{\Ext}^i_H(\pi|_H,\eta)=\RamiK{\Ext}^i_{G}(\pi,Ind_{H}^{G}(\eta))=\RamiK{\Ext}^i_G(ind_{H}^{G}(\eta'),\tilde{\pi}),$$ where $\eta'=\eta^{-1} \Delta_{H}^{-1}$ and $\Delta_{H}$ is the modulus character of $H.$
\end{proposition}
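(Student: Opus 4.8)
The statement asserts two natural isomorphisms: a Frobenius-reciprocity identification and a "passage to contragredient" identification. I would prove each separately, both following the pattern of first establishing the isomorphism at the level of Hom-functors (or the relevant adjoint pair) and then deriving the Ext-version by a $\delta$-functor / derived-functor argument.

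**Step 1: Frobenius reciprocity for Ext.** First I would recall the smooth Frobenius reciprocity isomorphism $\Hom_H(\pi|_H,\eta)\cong\Hom_G(\pi,\operatorname{Ind}_H^G(\eta))$, valid for any smooth $\pi$, which is the adjunction between restriction and smooth induction. The key point is that $\operatorname{Ind}_H^G(-)$ is exact on smooth representations (induction has both a left and a right adjoint in this setting, hence is exact), and restriction to $H$ sends injectives in $\cM(G)$ to injectives in $\cM(H)$ — this last fact follows because restriction is left adjoint to an exact functor ($\operatorname{Ind}_H^G$), or alternatively because $\operatorname{Ind}_H^G$ is exact and preserves injectives one way round. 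Then the isomorphism of Hom-functors, together with the fact that restriction preserves injective resolutions in the appropriate sense (or: $\operatorname{Ind}$ takes an injective resolution of $\eta$ to an injective resolution of $\operatorname{Ind}_H^G\eta$), yields the isomorphism $\Ext^i_H(\pi|_H,\eta)\cong\Ext^i_G(\pi,\operatorname{Ind}_H^G(\eta))$ by comparing the two universal $\delta$-functors extending the common $\Hom$. Here admissibility of $\pi$ is not really needed for this half, but it does no harm.

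**Step 2: Passage to the contragredient.** For the second isomorphism I would use the identity $\operatorname{Ind}_H^G(\eta)\cong\widetilde{\operatorname{ind}_H^G(\eta^{-1}\Delta_H^{-1})}$ (smooth induction is the smooth dual of compact induction with the modulus twist — this is the standard duality between $\operatorname{Ind}$ and $\operatorname{ind}$), combined with the fact that on admissible representations the contragredient functor $\pi\mapsto\widetilde\pi$ is exact and involutive, so that $\Ext^i_G(\pi,\widetilde\sigma)\cong\Ext^i_G(\sigma,\widetilde\pi)$ whenever one of $\pi,\sigma$ is admissible. Applying this with $\sigma=\operatorname{ind}_H^G(\eta')$, $\eta'=\eta^{-1}\Delta_H^{-1}$, gives $\Ext^i_G(\pi,\operatorname{Ind}_H^G(\eta))=\Ext^i_G(\pi,\widetilde{\operatorname{ind}_H^G(\eta')})\cong\Ext^i_G(\operatorname{ind}_H^G(\eta'),\widetilde\pi)$, as claimed. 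The duality $\Ext^i_G(\pi,\widetilde\sigma)\cong\Ext^i_G(\sigma,\widetilde\pi)$ itself is proved by taking a resolution of $\pi$ by finitely generated projectives (possible since $\cM(G)$ has finite homological dimension and enough projectives of finite type for admissible $\pi$), applying the exact contravariant functor $\widetilde{(-)}$, and identifying the two resulting complexes — one computes $\Ext^\bullet_G(\pi,\widetilde\sigma)$ and the other $\Ext^\bullet_G(\sigma,\widetilde\pi)$, using $\widetilde{\widetilde\pi}\cong\pi$ for admissible $\pi$.

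**Main obstacle.** The routine parts (smooth Frobenius reciprocity, exactness of $\operatorname{Ind}$) are standard; the delicate point is the contragredient duality in Step 2, specifically justifying $\Ext^i_G(\pi,\widetilde\sigma)\cong\Ext^i_G(\sigma,\widetilde\pi)$ under only a one-sided admissibility hypothesis and making sure the needed finiteness (existence of a resolution by finitely generated projectives, so that $\widetilde{(-)}$ behaves well and double-dual is the identity on the relevant terms) is available — this is where admissibility of $\pi$ is genuinely used, and where I would either cite Casselman or Bernstein's notes rather than reprove it. Since the proposition is explicitly attributed to \cite{Cas} and \cite[Proposition 2.5]{Pra}, in the actual write-up I expect one would simply refer to those sources; my reconstruction above is the argument underlying them.
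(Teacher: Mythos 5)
The paper itself gives no argument for this proposition; it is simply quoted from Casselman and from \cite[Proposition 2.5]{Pra}, so there is no in-house proof to compare with, and your reconstruction of the standard argument is judged on its own. In outline it is correct: derived Frobenius reciprocity for the first equality, and the duality $Ind_{H}^{G}(\eta)\cong\widetilde{ind_{H}^{G}(\eta')}$ plus a contragredient-swap for the second. Two local slips should be repaired. First, your justification that restriction sends injectives to injectives is backwards: a left adjoint of an exact functor preserves \emph{projectives}, not injectives; the correct statements are that restriction preserves injectives because it is right adjoint to the exact functor $ind_{H}^{G}$, or (your alternative, which is the cleaner route) that $Ind_{H}^{G}$ is exact and preserves injectives because its left adjoint, restriction, is exact, so applying $\Hom_G(\pi,-)$ to $Ind_{H}^{G}$ of an injective resolution of $\eta$ computes both $\Ext^i_G(\pi,Ind_H^G(\eta))$ and, via adjunction, $\Ext^i_H(\pi|_H,\eta)$. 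Second, in Step 2 you ground the swap $\Ext^i_G(\pi,\tilde\sigma)\cong\Ext^i_G(\sigma,\tilde\pi)$ on a resolution of $\pi$ by finitely generated projectives ``since $\pi$ is admissible''; admissible representations need not be finitely generated, so that justification fails as stated --- but it is also unnecessary. For any smooth $A$ and projective $P$ one has $\Hom_G(A,\tilde P)\cong\Hom_G(P,\tilde A)$ (the bilinear-forms identification, i.e.\ the $i=0$ case of the next proposition in the paper), and since $A\mapsto\tilde A$ is exact this shows $\tilde P$ is injective. Hence, taking any projective resolution $Q^\bullet\to\sigma$ with $\sigma=ind_{H}^{G}(\eta')$, the complex $\widetilde{Q^\bullet}$ is an injective resolution of $\tilde\sigma\cong Ind_{H}^{G}(\eta)$, and $\Hom_G(Q^\bullet,\tilde\pi)\cong\Hom_G(\pi,\widetilde{Q^\bullet})$ gives $\Ext^i_G(ind_{H}^{G}(\eta'),\tilde\pi)\cong\Ext^i_G(\pi,Ind_{H}^{G}(\eta))$ with no admissibility or finiteness hypotheses and no appeal to $\tilde{\tilde\pi}\cong\pi$. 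With these corrections your write-up is a complete proof, and it is in the same spirit as the technique the paper does display for the adjacent proposition (a Hom-level identification promoted to Ext using projectives such as $\Sc(G)\otimes V$ that behave well under the relevant functors).
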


Following this proposition we introduce some notation:

\begin{definition}[Homological Multiplicities]
In the setting of the proposition above we define
$$\widetilde m_{(H,\eta)}^{i}(\pi):=\dim \RamiK{\Ext}_{H}^{i}(\pi,\eta) \text{ and }  m_{(H,\eta)}^{i}(\pi):=\dim \RamiK{\Ext}_{G}^{i}(ind_{H}^{G}(\eta),\pi).$$
If $\eta$ is trivial, we will omit it from the notation.
\end{definition}
To relate those notions to the ones introduced in \cite{Pra} we need:
\begin{proposition}
Let $\pi_1$ be a smooth representation of $G$ and $\pi_2$ be an  admissible representation of $H$. Then
$\Ext_H^i(\pi_1,\pi_2)\cong\Ext_H^i(\pi_1\otimes\tilde\pi_2,\C).$ In particular we have
$$\dim\Ext_H^i(\pi_1,\pi_2)=\tilde m_{H}^{i}(\pi_1 \otimes  \tilde \pi_2).$$
\end{proposition}
\begin{proof}
$ $
\begin{enumerate}[{Step} 1]
\item Proof for $i=0$.\\
We have
$\Hom_\C(\pi_1,\tilde\pi_2^*)\cong Bil(\pi_1,\tilde\pi_2;\C) \cong\Hom_\C(\pi_1\otimes\tilde\pi_{2},\C)$.
We get
$$\Hom_H(\pi_1,\pi_2)\cong\Hom_H(\pi_1,\tilde{\tilde\pi}_2)=\Hom_H(\pi_1,\tilde\pi_2^*)\cong\Hom_H(\pi_1\otimes\tilde\pi_{2},\C).$$
\item Proof for the general case.\\
To deduce the general case from the previous step we need to show that there are enough projective representations of $G$ that stay projective after restrictions to $H$. This follows from the fact that  $\Sc(G)\otimes V$ is a projective representation of $G$ for any vector space $V$ (See \cite{Cas}, \cite[Propositions 2.5,2.6]{Pra} for more details).
\end{enumerate}
\end{proof}
\subsection{Bernstein center}
We summarize the parts of the theory of Bernstein center and Bernstein decomposition that are used in this paper.
\EitanB{We need few notations.
\begin{notation}
Let $\bf P$ be a \RamiG{parabolic subgroup} of $\bf G$ and  $\bfM$ be its Levi factor. Let $\rho$ be a cuspidal representation of $M=\bfM(F)$. Denote
\begin{itemize}
\item $\RamiE{i^G_M}: \cM(M) \to \cM(G)$ the (normalized) parabolic induction of $M$ w.r.t. $P$
\item $ \RamiE{\overline i^G_M}(\rho): \cM(M) \to \cM(G)$   the (normalized) parabolic induction from $M$ w.r.t. an opposite \RamiG{parabolic subgroup}  $\bar \bfP$.
\item
 $\RamiE{r^G_M}:  \cM(G) \to \cM(M)$  the (normalized) Jacquet functor.
\end{itemize}
\end{notation}
}
\EitanG{
Given $M\subset G$, the objects defined above depend on the choice of a parabolic $P \subset G.$ However, unless stated otherwise the result will not depend on this choice and we will ignore it. 
\begin{theorem} [Bernstein]\label{thm:cent}
$ $
\begin{enumerate}
\item\label{Ber.cent:basis}
There exists a local base $\mathcal{B}$ of the topology at the unit $e \in G,$ such that every $K \in \cB$ is a compact (open) subgroup satisfying the following:

\begin{enumerate}

\item \label{Ber.cent:1}

The category $\cM(G,K)$ of representations generated by their $K$-fixed vectors is a direct summand of $\cM(G).$

\item \label{Ber.cent:2}

The functor $V \to V^{K}$ is an equivalence of categories from $\cM(G,K)$ to the category of modules over the Hecke algebra  $\cH(G,K)$.
\item\label{Ber.cent:not} The algebra  $\cH(G,K)$ is Noetherian.

\item For any Levi subgroup $M \subset G$ and for any $V \in \cM(G)$ the map
$$V^K\to r_{M}^G(V)^{K\cap M}$$ is onto. 
\end{enumerate}
We will call an open compact subgroup $K$ satisfing these properties, a splitting subgroup.

\item\label{Ber.cent:2.5} \RamiE{The f}unctor $\RamiE{r^G_M}$ is right adjoint to $\RamiE{\overline i^G_M}$, that is $$\RamiK{\Hom_G}(\RamiE{\overline i^G_M}(V),W) \cong \RamiK{\Hom_M}(V,r^G_M(W)).$$

\item\label{Ber.cent:3}  For a cuspidal data $(M,\rho)$ let \EitanB{$\Psi_{G}(M,\rho)=\RamiE{\overline i^G_M}(\rho \otimes O(\fX_M))$} be the normalized parabolic induction of $\rho \otimes O(\fX_M)$ where the action of $M$ is diagonal. Then $\Psi_{G}(M,\rho)\in \cM(G)$ is a projective generator of a direct summand of the category $\cM(G).$
\item \label{thm:ber:azu0}  Let $\rho$ be \RamiG{an irreducible cuspidal} representation of $G$, and set \Eitan{$ \fI_\rho:=\{\psi\in\fX_G|\psi \rho\simeq \rho\}$}  and embed $\cO(\fX_G)$ in $\cR_{(G,\rho)}:=\End(\Psi_{G}(G,\rho))$.
Then there exist\RamiE{s} a decomposition $$\cR_{(G,\rho)}= \bigoplus_{\psi\in \fI_\rho}O(\fX_G)\nu_\psi,$$ \RamiG{such that}  $\nu_\psi f=f_\psi \nu_\psi $ and $\nu_\psi\nu_{\psi'}=c_{\psi,\psi'}\nu_{\psi\psi'}$ where $f_\psi$ is  the translation of $f\in O(\fX_G)$ by $\psi$ and  $c_{\psi,\psi'}$ are scalars.

\end{enumerate}

\end{theorem}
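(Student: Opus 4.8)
Parts \eqref{Ber.cent:basis}--\eqref{Ber.cent:3} are the standard Bernstein--Deligne structure theory of $\cM(G)$ --- splitting subgroups and the block decomposition, second adjointness, and the projective generators $\Psi_{G}(M,\rho)$ --- and for these I would simply quote \cite{BerLec}; the plan below concerns only the structural description \eqref{thm:ber:azu0} of $\cR_{(G,\rho)}=\End(\Psi_{G}(G,\rho))$. Since $\bfM=\bfG$, parabolic induction is the identity, so $\Psi_{G}(G,\rho)=\rho\otimes O(\fX_G)$ with $G$ acting diagonally --- on the factor $O(\fX_G)$ through multiplication by the evaluation units $\mathrm{ev}_g\colon\psi\mapsto\psi(g)$ --- and the fibre of this family at $\psi\in\fX_G$ is the twist $\psi\rho$. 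Multiplication $m_f$ by $f\in O(\fX_G)$ on the second factor is $G$-equivariant because $O(\fX_G)$ is commutative, which gives the embedding $O(\fX_G)\hookrightarrow\cR_{(G,\rho)}$ of the statement.

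Next I would build the elements $\nu_\psi$. The group $\fI_\rho$ is a finite subgroup of the torus $\fX_G$, acting freely by translation (standard for cuspidal $\rho$). For each $\psi\in\fI_\rho$ fix, by Schur's lemma, an invertible intertwiner $A_\psi\in\Hom_G(\rho,\psi\rho)$, unique up to a scalar and normalised so that $A_1=\mathrm{id}$, and let $\nu_\psi$ be the $\C$-linear endomorphism of $\rho\otimes O(\fX_G)$ that translates the $O(\fX_G)$-factor by $\psi$ while simultaneously twisting the $\rho$-factor by $A_\psi$. A direct computation --- the twist by $A_\psi$ exactly offsetting the shift by $\psi$ --- shows that $\nu_\psi$ lies in $\cR_{(G,\rho)}$, is invertible, obeys $\nu_\psi m_f=m_{f_\psi}\nu_\psi$, and obeys $\nu_\psi\nu_{\psi'}=c_{\psi,\psi'}\nu_{\psi\psi'}$, where $c_{\psi,\psi'}\in\C^{\times}$ is the scalar (Schur again) comparing the two intertwiners $\rho\to(\psi\psi')\rho$ obtained from $A_\psi,A_{\psi'}$ and from $A_{\psi\psi'}$; associativity of composition makes $c$ a $2$-cocycle. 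This exhibits a subalgebra $\bigoplus_{\psi\in\fI_\rho}O(\fX_G)\nu_\psi\subseteq\cR_{(G,\rho)}$, a twisted crossed product of $O(\fX_G)$ by $\fI_\rho$.

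It remains to prove this inclusion is an equality. Directness of the sum is the (skew) independence-of-automorphisms argument: distinct $\psi\in\fI_\rho\subset\fX_G$ induce pairwise distinct ring automorphisms $f\mapsto f_\psi$ of $O(\fX_G)$, so the $\nu_\psi$ are left $O(\fX_G)$-linearly independent. For surjectivity I would argue order-theoretically over $\mathfrak z:=O(\fX_G)^{\fI_\rho}$. Since $\fI_\rho$ acts freely on the torus, $\fX_G\to\operatorname{Spec}\mathfrak z$ is a finite étale Galois cover with group $\fI_\rho$, $\operatorname{Spec}\mathfrak z$ is smooth, and $O(\fX_G)$ is a free $\mathfrak z$-module of rank $n:=|\fI_\rho|$; hence the constructed crossed product $\bigoplus_{\psi}O(\fX_G)\nu_\psi$ is an Azumaya $\mathfrak z$-algebra, so (Auslander--Goldman, $\mathfrak z$ being regular) a maximal $\mathfrak z$-order in its generic algebra. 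On the other hand $\mathfrak z$ is exactly the centre of the block of $\rho$, hence central in $\cR_{(G,\rho)}$, and $\cR_{(G,\rho)}$ is module-finite over it; it is $\mathfrak z$-torsion-free because $\Psi_{G}(G,\rho)$ is $\mathfrak z$-free, so it injects into $\cR_{(G,\rho)}\otimes_{\mathfrak z}\operatorname{Frac}(\mathfrak z)$; and since every twist $\psi\rho$, $\psi\in\fX_G$, is irreducible, a computation at the generic point of $\operatorname{Spec}\mathfrak z$ identifies $\cR_{(G,\rho)}\otimes_{\mathfrak z}\operatorname{Frac}(\mathfrak z)$ with the generic algebra of the crossed product ($\operatorname{Frac}(\mathfrak z)$-dimension $n^{2}$). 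Therefore $\cR_{(G,\rho)}$ is a $\mathfrak z$-order containing the maximal order $\bigoplus_{\psi}O(\fX_G)\nu_\psi$ and having the same generic algebra, so the two coincide, which is the desired decomposition.

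The main obstacle is the surjectivity step, and within it the two structural inputs it uses: that $\cR_{(G,\rho)}$ is module-finite over its centre $\mathfrak z=O(\fX_G)^{\fI_\rho}$, and that its generic rank is exactly $n^{2}$ --- equivalently, that $\End$ of the universal family $\rho\otimes O(\fX_G)$ is no larger than the crossed product after inverting $\mathfrak z$. I would take both from the Bernstein--Deligne analysis of the cuspidal block of $\rho$ (or, concretely, from the theory of types, where the Hecke algebra of this block is such a twisted crossed product up to Morita equivalence); granting them, the construction of the $\nu_\psi$ and the verification of the relations and of directness are routine.
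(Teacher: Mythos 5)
For parts \eqref{Ber.cent:basis}--\eqref{Ber.cent:3} you do what the paper itself does: the paper gives no proof of this theorem, only the list of exact references following the statement, with part \eqref{thm:ber:azu0} taken from \cite[Proposition 28]{BerLec} and the rest from \cite{BD} and \cite{BerSA}. Where you genuinely diverge is in sketching an argument for \eqref{thm:ber:azu0}. Your construction of the units $\nu_\psi$ from Schur-normalised intertwiners $A_\psi\colon\rho\to\psi\rho$, the cocycle relation $\nu_\psi\nu_{\psi'}=c_{\psi,\psi'}\nu_{\psi\psi'}$, and the independence argument giving directness of $\bigoplus_{\psi\in\fI_\rho}O(\fX_G)\nu_\psi$ are sound and are the same ingredients as in Bernstein's treatment. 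The difference lies in surjectivity: Bernstein's route is a direct Mackey/Frobenius computation of $\End_G(\Psi_G(G,\rho))$ written as $ind_{G^0}^{G}(\rho|_{G^0})$, using that $\rho|_{G^0}$ is a finite sum of irreducibles and that cuspidal matrix coefficients are compactly supported modulo the centre, so that only the cosets indexed by $\fI_\rho$ contribute; this yields the decomposition in one stroke. You instead observe that the crossed product is Azumaya over $\mathfrak{z}=O(\fX_G)^{\fI_\rho}$, hence a maximal order, and squeeze $\cR_{(G,\rho)}$ between it and the common generic fibre. That works, but note the price: you must import from the same Bernstein--Deligne theory that $\mathfrak{z}$ is central in $\cR_{(G,\rho)}$ (the centre of the block), that $\cR_{(G,\rho)}$ is module-finite over it, and that the generic rank is $|\fI_\rho|^2$, where the last point also needs a flat-base-change identification of $\End_G(\Psi_G(G,\rho))\otimes_{\mathfrak{z}}\operatorname{Frac}(\mathfrak{z})$ with the endomorphisms of the generic fibre, which again rests on the finiteness statements of part \eqref{Ber.cent:basis}. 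These inputs are of essentially the same depth as the statement being proved, and your proposed fallback --- that type theory identifies the block's Hecke algebra as a twisted crossed product up to Morita equivalence --- is in substance the statement itself, so quoted in that form the argument becomes circular. As a citation-plus-reconstruction your proposal matches the paper's level of rigour; as a self-contained proof, the direct computation of $\End_G(ind_{G^0}^{G}(\rho|_{G^0}))$ is more elementary and avoids that risk.
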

}
For completeness we supply exact references:
\begin{itemize}
\item For statement(\ref{Ber.cent:basis}) See  \cite[Corollary 3.9, Proposition 2.10 and its proof, Corollary 3.4, the proof of Theorem 2.13 and Proposition 3.5.2]{BD}.


\RamiD{\item For  statement (\ref{Ber.cent:2.5}) see \cite{BerSA}.}
\item Statement (\ref{Ber.cent:3}) follows from \cite[Propositions 34,35]
{BerLec} and \cite[Proposition 2.10]{BD}.
\item For  statement  (\ref{thm:ber:azu0}) see \cite[Proposition 28]{BerLec}.
\end{itemize}


%
%
%

\section{Finiteness of Homological multiplicities}\label{sec:hom.fin}

In this section we prove a \RamiD{generalization} of Theorem \ref{thm:intro.f.mul}. Namely,

\begin{theorem}\label{thm.f.mul}
 Assume that $G/H$ is an $F$-spherical $G$-variety. Let $\eta$ be a character of \RamiE{$H.$} Assume that for any irreducible  representation $(\pi,V)$ we have ${m}_{(\RamiE{H},\eta)}^0(\pi)< \infty.$ Then for any admissible representation $\pi$ and any natural $j$ we have $ m_{(\RamiE{H},\eta)}^j(\pi)< \infty.$
\end{theorem}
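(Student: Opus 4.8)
The plan is to reduce the statement about $\Ext^j$ to a statement about finitely generated modules over a Noetherian Hecke algebra, using the Bernstein decomposition. First I would fix a splitting subgroup $K$ (in the sense of Theorem~\ref{thm:cent}\eqref{Ber.cent:basis}) and work inside the block $\cM(G,K)$; since $\cM(G)$ decomposes as a product over such blocks and an admissible $\pi$ meets only finitely many of them, it suffices to bound $m^j_{(H,\eta)}(\pi)$ for $\pi$ lying in a single block. Under the equivalence $V\mapsto V^K$, $\Ext^j_G$ becomes $\Ext^j_{\cH(G,K)}$, and $\cH=\cH(G,K)$ is Noetherian. So the key point is to show that $\ind_H^G(\eta)^K$, or more precisely its image in the relevant block, is a \emph{finitely generated} $\cH$-module; then for admissible $\pi$ (hence finitely generated, indeed finite length, as an $\cH$-module) all the $\Ext^j_{\cH}(\ind_H^G(\eta)^K,\pi^K)$ are finite-dimensional, because over a Noetherian ring $\Ext^j$ between a finitely generated module and a finite-dimensional module is finite-dimensional (take a resolution of the first argument by finitely generated frees).

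The heart of the matter is therefore the finite generation of $\ind_H^G(\eta)$ in each block. Here I would use the sphericity of $G/H$ together with the hypothesis $m^0_{(H,\eta)}(\pi)<\infty$ for all irreducible $\pi$. The idea: by the Proposition relating the two forms of homological multiplicity, $\Hom_G(\ind_H^G(\eta),\pi)\cong\Ext^0_H(\tilde\pi,\eta')$-type spaces, so the finiteness of $m^0$ says exactly that $\Hom_G(\ind_H^G(\eta),\pi)$ is finite-dimensional for every irreducible $\pi$. Now $\ind_H^G(\eta)$ is a smooth representation; its projection to a block $\cM(G,K)$ is a module whose space of $K$-invariants $M_0:=\bigl(\ind_H^G(\eta)^K\bigr)$ is an $\cH$-module with the property that $\Hom_{\cH}(M_0,L)$ is finite-dimensional for every simple $\cH$-module $L$ in the block. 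Over a Noetherian algebra which is moreover finitely generated as a module over its center (true here, as the Bernstein center acts with the block being of finite type), a module with finite-dimensional $\Hom$ into every simple is finitely generated — one argues that otherwise one could produce an infinite strictly ascending chain of submodules whose successive quotients all surject onto a fixed simple, contradicting finite-dimensionality of $\Hom(M_0,L)$. (Alternatively, I would invoke the known structure theory: in a Bernstein block the projective generator $\Psi_G(M,\rho)$ is finitely generated, and $\ind_H^G(\eta)$ restricted to the block is a quotient of a finite direct sum of copies of it precisely when the above $\Hom$-finiteness holds.)

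Finally, to upgrade from $j=0$ to all $j$: once $M_0$ is a finitely generated $\cH$-module, choose a resolution $\cdots\to \cH^{n_1}\to \cH^{n_0}\to M_0\to 0$ by finitely generated free modules (possible by Noetherianity), and then $\Ext^j_{\cH}(M_0,\pi^K)$ is a subquotient of $\operatorname{Hom}_{\cH}(\cH^{n_j},\pi^K)\cong (\pi^K)^{n_j}$, which is finite-dimensional since $\pi$ is admissible. Unwinding the equivalence of categories gives $m^j_{(H,\eta)}(\pi)<\infty$, and summing over the finitely many blocks meeting $\pi$ finishes the proof.

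The main obstacle I expect is the middle step: deducing finite generation of $\ind_H^G(\eta)$ in each block from the numerical hypothesis on $m^0$. This is where sphericity must really be used (the hypothesis alone for a general $H$ need not give finite generation of the induced module, only of its coinvariants against irreducibles), and one likely needs the finiteness of $P_0$-orbits on $G/H$ to control $\ind_H^G(\eta)$ as a sheaf on $G/P_0$ and hence its behavior under the geometric Jacquet/Mackey analysis that feeds into Bernstein's description of the block.
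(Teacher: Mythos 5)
Your reduction to the Hecke algebra is essentially the paper's own: choose a splitting subgroup $K$ with $\pi\in\cM(G,K)$, identify $\Ext^j_G$ with $\Ext^j_{\cH(G,K)}$, and use Noetherianity of $\cH(G,K)$ together with admissibility of $\pi$ (so $\pi^K$ is finite-dimensional) to conclude, \emph{once} $ind_{H}^{G}(\eta)^K$ is known to be finitely generated over $\cH(G,K)$. But that finite generation is the heart of the theorem, and the argument you propose for it does not work. The abstract claim you lean on --- over a Noetherian algebra, finite over its center, a module with finite-dimensional $\Hom$ into every simple module is finitely generated --- is false: take $A=\C[x]$ and $M=\C[x,x^{-1}]$ (or $M=\C(x)$); then $\Hom_A(M,L)$ is at most one-dimensional for every simple $L$, yet $M$ is not finitely generated, and the proposed ascending-chain argument breaks down because the successive quotients need not admit maps to any fixed simple. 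Translated back to representations, finiteness of ${m}_{(H,\eta)}^0(\pi)$ for all irreducible $\pi$ does not, by general module theory, force the projection of $ind_{H}^{G}(\eta)$ to a Bernstein block to be finitely generated; your parenthetical alternative (``a quotient of finitely many copies of $\Psi_G(M,\rho)$ precisely when $\Hom$-finiteness holds'') fails for the same reason, since being such a quotient is just a restatement of finite generation.

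You correctly sense that sphericity must enter exactly here, but the sketch (``control $ind_{H}^{G}(\eta)$ as a sheaf on $G/P_0$\,\dots'') is not carried out, and it is in fact a substantial theorem rather than a routine step: the paper quotes it as Theorem \ref{thm.f.gen}, from \cite{AAG} and \cite[Appendix B]{AGS_Z}, asserting that for an $F$-spherical $G/H$ with finite $m^0$-multiplicities the module $ind_{H}^{G}(\eta)^{K}$ is finitely generated over $\cH(G,K)$ for every open compact $K$. With that input, your remaining steps (resolution by finitely generated free $\cH(G,K)$-modules, so that $\Ext^j$ is a subquotient of $(\pi^K)^{n_j}$) coincide with the paper's proof. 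So the strategy is right, but the key middle step is either incorrect as argued or must be imported as the cited finite-generation theorem.
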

We will say the the pair $(G,H)$ and the $G$-space $G/H$ is of \textbf{finite type}, if the conditions of Theorem \ref{thm.f.mul} are satisfied for any $\eta$. As mentioned earlier, it is conjectured that any $F$-spherical pair is of finite type and it is \RamiG{known} in many cases.

%

For the proof we recall the following facts:

\begin{theorem} [{\cite{AAG}, \cite[Appendix B]{AGS_Z}}]\label{thm.f.gen}
Assume \RamiG{that} $G/H$ is an $F$-spherical $G$-variety and that    for any irreducible  representation $(\pi,V)$ we have ${m}_{(H,\eta)}^0(\pi)< \infty.$ Let $K$ be an open subgroup of $G.$  Then the module $ind_{H}^{G}(\eta)^{K}$ is finitely generated over $\cH(G,K).$
\end{theorem}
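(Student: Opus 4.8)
The plan is to convert the Hecke-module statement into one about individual Bernstein components of $ind_H^G(\eta)$, and to establish that component-wise finiteness by induction on $\dim\bfG$, using second adjointness and the geometric filtration of Jacquet modules of Schwartz spaces of spherical varieties; the finite-type hypothesis enters only at the bottom of the induction. Concretely, the first step is a reduction to components. Fix a splitting subgroup $K_0\subseteq K$ (Theorem~\ref{thm:cent}\eqref{Ber.cent:basis}). Only finitely many cuspidal supports carry $K_0$-fixed vectors, so $\cM(G,K_0)=\prod_{\mathfrak s\in S}\cM(G)_{\mathfrak s}$ with $S$ finite; for each $\mathfrak s\in S$ the object $ind_{K_0}^G(\C)_{\mathfrak s}$ is a projective generator of $\cM(G)_{\mathfrak s}$, so $W\in\cM(G)_{\mathfrak s}$ is finitely generated precisely when $W^{K_0}$ is finitely generated over the $\mathfrak s$-block of $\cH(G,K_0)$. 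Applying this to $V:=ind_H^G(\eta)$, using $V^{K_0}=\bigoplus_{\mathfrak s\in S}V_{\mathfrak s}^{K_0}$ and $V^K=e_K\cdot V^{K_0}$ together with Noetherianity of $\cH(G,K_0)$ and its module-finiteness over the Bernstein centre, the desired conclusion follows once one knows that each Bernstein component $V_{\mathfrak s}$ is a finitely generated $G$-representation.

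Next I would pass to a Levi. Fix $\mathfrak s=[(\bfM,\rho)]$. By Theorem~\ref{thm:cent}\eqref{Ber.cent:3} the projective generator of $\cM(G)_{\mathfrak s}$ is $\Psi_G(M,\rho)=\overline i^G_M(\rho\otimes O(\fX_M))$, and $\cR_{(M,\rho)}=\End_G(\Psi_G(M,\rho))$ is module-finite over the image of $O(\fX_M)$, so $V_{\mathfrak s}$ is finitely generated as soon as $\Hom_G(\Psi_G(M,\rho),V)$ is finitely generated over $O(\fX_M)$. Second adjointness (Theorem~\ref{thm:cent}\eqref{Ber.cent:2.5}) rewrites this as $\Hom_M\bigl(\rho\otimes O(\fX_M),\,r^G_M(V)\bigr)$. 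Since $G/H$ is $F$-spherical it has finitely many $P$-orbits, and the geometric lemma for spherical varieties (cf.\ \cite{Del,SV}) provides a finite $M$-stable filtration of $r^G_M(V)$ whose subquotients are $ind_{H_i}^M(\eta_i)$ for $M$-spherical spaces $M/H_i$ (the boundary degenerations of $X$) and characters $\eta_i$. As $\rho\otimes O(\fX_M)=\Psi_M(M,\rho)$ is projective, $\Hom_M(\Psi_M(M,\rho),-)$ is exact, so $\Hom_M(\rho\otimes O(\fX_M),r^G_M(V))$ is an iterated extension of the modules $\Hom_M(\Psi_M(M,\rho),ind_{H_i}^M(\eta_i))$; since an iterated extension of finitely generated $O(\fX_M)$-modules is finitely generated, it suffices to show each $ind_{H_i}^M(\eta_i)_{[(M,\rho)]}$ is a finitely generated $M$-representation.

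For $\bfM\subsetneq\bfG$ this is the component form of the assertion for the smaller reductive group $M$, so it follows by induction on $\dim\bfG$ — once one knows that the degeneration $(M/H_i,\eta_i)$ is itself of finite type, so that the inductive hypothesis applies. The base case $\bfM=\bfG$ is a direct Mackey computation: writing $G^1$ for the common kernel of the absolute values of the rational characters of $\bfG$, one has $\rho\otimes O(\fX_G)\cong ind_{G^1}^G(\rho|_{G^1})$, and Frobenius reciprocity together with the Mackey decomposition of $(ind_H^G\eta)|_{G^1}$ along the double cosets $G^1\backslash G/H$ (all $G$-conjugate, as $G^1$ is normal, and with $\rho|_{G^1}$ unchanged since $\rho$ is a representation of $G$) identifies $\Hom_G(\rho\otimes O(\fX_G),ind_H^G(\eta))$ with $S\otimes_{\C}\C[\,G^1\backslash G/H\,]$, where $S$ is the single (suitably twisted) period space over $H\cap G^1$ and $O(\fX_G)=\C[G/G^1]$ acts by translating the index set. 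As $G^1\backslash G/H$ is a quotient of the lattice $G/G^1$, the algebra $\C[\,G^1\backslash G/H\,]$ is the coordinate ring of a finite union of subtorus cosets in $\fX_G$, hence finitely generated over $O(\fX_G)$; so the whole module is finitely generated over $O(\fX_G)$ provided $\dim_\C S<\infty$, and this last finiteness is the finite-type property for the open $G^1$-subvariety $G^1\cdot x_0\subseteq X$.

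Everything above is formal once one has the Bernstein decomposition, Noetherianity and centre-finiteness of Hecke algebras, second adjointness, and the geometric lemma. The real content — and the step I expect to be the main obstacle — is the \emph{stability of the finite-type hypothesis}: that finiteness of $m^0_{(H,\eta)}$ on all irreducibles forces the same finiteness for the boundary degenerations $(M/H_i,\eta_i)$ over proper Levis (needed to run the induction) and for the open $G^1$-orbit (needed in the base case). Establishing this propagation, and organizing the finitely many degenerations — for which the spherical hypothesis is precisely what makes the bookkeeping terminate — is the heart of the matter, and is what \cite{AAG} and \cite[Appendix~B]{AGS_Z} carry out.
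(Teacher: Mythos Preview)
The paper does not contain a proof of this theorem: it is quoted as an external result, with the attribution \cite{AAG} and \cite[Appendix~B]{AGS_Z} in the theorem heading, and the text moves directly to the proof of Theorem~\ref{thm.f.mul}, which \emph{uses} Theorem~\ref{thm.f.gen} as a black box. So there is no in-paper argument to compare your proposal against.

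That said, your outline is a reasonable sketch of how such results are actually established, and you have correctly isolated the nontrivial step. The formal reductions --- restricting to a splitting subgroup, passing to a single Bernstein block, replacing finite generation of $V_{\mathfrak s}$ by finite generation of $\Hom_G(\Psi_G(M,\rho),V)$ over $O(\fX_M)$, invoking second adjointness, and filtering $r^G_M(ind_H^G\eta)$ via the orbit decomposition of $P\backslash X$ --- are all sound and are precisely the ingredients the present paper assembles elsewhere (see \S\ref{sec:mult.shif} and \S\ref{sec:cusp}). Your base-case computation for $M=G$ is essentially the content of Step~1 in the proof of Theorem~\ref{thm: StrCuspShf} here, reformulated in terms of $G^1$ rather than $G^0\cdot\Lambda_\cZ$.

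The genuine gap you yourself flag is the one that matters: the hypothesis is that $m^0_{(H,\eta)}(\pi)<\infty$ for irreducible $\pi$ of $G$, and nothing in your reduction shows that the boundary pairs $(H_i,\eta_i)$ inside $M$ inherit the analogous finiteness for irreducible representations of $M$. Without that, the induction does not start. This propagation is exactly what the cited references supply, and it is not a formality --- it requires control of the asymptotics of $H$-invariant functionals along the boundary, which is the substantive analytic/geometric input. Your proposal is therefore an accurate roadmap, but not a self-contained proof; the load-bearing step is deferred to \cite{AAG} and \cite{AGS_Z}, just as the paper itself defers the whole theorem.
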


\begin{proof}[Proof of Theorem \ref{thm.f.mul}]

Let $K$ be such \RamiE{that} $V^{K}$ generates $V.$ \EitanH{By Theorem 
\ref{thm:cent}(\ref{Ber.cent:basis})}
\EitanH{we can assume that $K$ is a splitting subgroup.}

Let $W$ be the sub-representation of \EitanB{$ind_{H}^{G}(\eta)$} generated by $K$ fixed vectors.
We have

$$ m_{(H,\eta)}^{\RamiD{i}}(\pi)=\dim \Ext_{G}^{i}(ind_{H}^{G}(\eta),\pi)=\dim \Ext_{G}^{i}(W,\pi)=\dim \Ext_{\cH(G,K)}^{i}(ind_{H}^{G}(\eta)^K,\pi^K).$$

The fact that $\cH(G,K)$ is Noetherian and Theorem \ref{thm.f.gen} \RamiE{imply} that \RamiD{the module $ind_{H}^{G}(\eta)^K$ has a resolution by finitely generated free $\cH(G,K)$-module. Thus} $$\dim \Ext_{\cH(G,K)}^{i}(ind_{H}^{G}(\eta)^K,\pi^K)<\infty.$$
\end{proof}

\section{Homological multiplicities and the multiplicity sheaf}\label{sec:mult.shif}
In this section we study the homological multiplicities of cuspidally induced representations.

\begin{defn}\label{not: sheaf}
Let \EitanG{$M,\rho$} as \RamiE{Theorem \ref{thm:cent}\eqref{Ber.cent:3}}.
 Let $V \in \cM(G)$. We define
\EitanB{
$$\cF_{V}((M,\rho)):=\Hom_{G}(\Psi_{G}(M,\rho),V),$$

This is a module over $\End_{\RamiE{G}}(\Psi_{G}(M,\rho))$ and thus \RamiE{a module} over $\cR_{(M,\rho)}=\End_{\RamiE{M}}(\Psi_{M}(M,\rho))$. In particular, it is a module over $\cO(\CH_M)$.
We shall view it as a quasi-coherent
sheaf over the space  $\CH_M$ of unramified characters of $M$.
}

\end{defn}

\begin{notation}

Let $P,M,\rho$ as above.\begin{itemize}
\item When  $V=\RamiD{ind_{H}^{G}}(\eta)$ for a character $\eta$ of a subgroup $H\subset G$ we denote  $\cF_{H,\eta}((M,\rho)):=\cF_{V}((M,\rho))$.
\item  When $V=\Sc(X,\cL)$ for a $G$-equivariant sheaf $\cL$  on $X$ we will denote  $\cF_{X,\cL}((M,\rho)):=\cF_{V}((M,\rho))$.
\item If $\eta$ or $\cL$ are trivial we will omit them from the notation.

\end{itemize}
\end{notation}



The following generalizes Theorem \ref{thm:intro.mutl_via_fiber}\eqref{thm:intro.mutl_via_fiber:1} of the introduction.
\begin{theorem}\label{lem: mutl_via_fiber}
\EitanG{Let $V \in \cM(G)$}
Then,

$$\RamiK{\Ext}_{G}^{i}(V,i_M^G(\chi \rho))\cong \RamiK{\Ext}^i_{O(\RamiD{\CH_M})}(\cF_{V}((M,\rho)), \delta_{\chi}),$$

 where $\delta_{\chi}$ is the skyscraper sheaf over $\chi \in \RamiD{\CH_M}.$
\end{theorem}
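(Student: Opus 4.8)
The statement asserts a natural isomorphism
$$\Ext^i_G(V, i_M^G(\chi\rho)) \cong \Ext^i_{O(\CH_M)}(\cF_V((M,\rho)), \delta_\chi).$$
The plan is to pass everything to the Bernstein block cut out by the cuspidal datum $(M,\rho)$ and exploit the equivalence of that block with the category of $\cR_{(M,\rho)}$-modules, together with the fact that $\Psi_G(M,\rho)$ is a projective generator there. Write $e$ for the idempotent in the Bernstein center projecting onto this block, and $\cM(G)_{(M,\rho)} := e\cM(G)$. Since $i_M^G(\chi\rho)$ lies in this block and $\Psi := \Psi_G(M,\rho)$ is a projective generator, the functor $W \mapsto \Hom_G(\Psi, W)$ is an equivalence $\cM(G)_{(M,\rho)} \simeq \cR_{(M,\rho)}\textrm{-mod}$ (this is essentially Theorem \ref{thm:cent}\eqref{Ber.cent:3} combined with \eqref{Ber.cent:2}-type reasoning). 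Under this equivalence, $\Ext$-groups computed in $\cM(G)$ between objects of the block agree with $\Ext$-groups of $\cR_{(M,\rho)}$-modules: $\Ext^i_G(V, i_M^G(\chi\rho)) \cong \Ext^i_{\cR_{(M,\rho)}}(\cF_V((M,\rho)), \Hom_G(\Psi, i_M^G(\chi\rho)))$, where I have used that $e V$ has the same Hom's and Ext's into block objects as $V$ does.

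\textbf{Key steps.} First I would record that $\Ext^i_G(V, i_M^G(\chi\rho)) = \Ext^i_G(eV, i_M^G(\chi\rho))$ because the block decomposition is by direct summands of $\cM(G)$, so $\Ext$ respects it. Second, apply the equivalence of categories to rewrite the right-hand side as $\Ext^i_{\cR_{(M,\rho)}}\big(\cF_V((M,\rho)),\, \cF_{i_M^G(\chi\rho)}((M,\rho))\big)$; here $\cF_V((M,\rho)) = \Hom_G(\Psi, V)$ is, by definition \ref{not: sheaf}, the image of $eV$ under the equivalence. Third — the computational heart — I would identify the $\cR_{(M,\rho)}$-module $\Hom_G(\Psi_G(M,\rho), i_M^G(\chi\rho))$ with the skyscraper $\delta_\chi$ as a module over $O(\CH_M) \subset \cR_{(M,\rho)}$: the point is that $i_M^G(\chi\rho)$ is the ``fiber at $\chi$'' of the family $\Psi_G(M,\rho) = \bar i^G_M(\rho\otimes O(\CH_M))$, so $\Hom_G(\Psi, i_M^G(\chi\rho))$ should be a one-dimensional space on which $O(\CH_M)$ acts through evaluation at $\chi$, i.e. $\delta_\chi$. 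Concretely one computes $\Hom_G(\bar i^G_M(\rho\otimes O(\CH_M)), i_M^G(\chi\rho)) \cong \Hom_M(\rho\otimes O(\CH_M), r^G_M i_M^G(\chi\rho))$ by adjunction (Theorem \ref{thm:cent}\eqref{Ber.cent:2.5}), then use the geometric lemma / Mackey theory for $r^G_M i_M^G(\chi\rho)$ and cuspidality of $\rho$ to extract exactly the $\chi$-component, obtaining a copy of $\delta_\chi$ with the asserted $O(\CH_M)$-action. Finally, since $\cR_{(M,\rho)}$ is a finite $O(\CH_M)$-algebra and the module $\delta_\chi$ is supported (as $\cR_{(M,\rho)}$-module, via the decomposition $\cR_{(M,\rho)} = \bigoplus_\psi O(\CH_M)\nu_\psi$ of Theorem \ref{thm:cent}\eqref{thm:ber:azu0} in the case $M=G$, or its analogue in general) in a way that makes $\Ext^i_{\cR_{(M,\rho)}}(-, \delta_\chi)$ agree with $\Ext^i_{O(\CH_M)}(-, \delta_\chi)$ — here one needs that restriction of scalars from $\cR_{(M,\rho)}$ to $O(\CH_M)$ is exact and that a projective resolution over $O(\CH_M)$ of $\cF_V$ can be compared with one over $\cR_{(M,\rho)}$, or more cleanly that $\delta_\chi$ as an $\cR_{(M,\rho)}$-module is induced/coinduced appropriately — conclude the desired isomorphism.

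\textbf{Main obstacle.} I expect the delicate point to be the last step: relating $\Ext$ over the (noncommutative, but finite) algebra $\cR_{(M,\rho)}$ to $\Ext$ over its central-ish subalgebra $O(\CH_M)$, and in particular verifying that the $\cR_{(M,\rho)}$-module structure on $\Hom_G(\Psi, i_M^G(\chi\rho))$ is precisely such that $\Ext^\bullet_{\cR_{(M,\rho)}}(\cF_V, -)$ against it computes the $O(\CH_M)$-linear $\Ext$ against $\delta_\chi$. The cleanest route is probably to observe that $i_M^G(\chi\rho)$ corresponds, under the equivalence, to $\cR_{(M,\rho)} \otimes_{O(\CH_M)} \delta_\chi$ (a ``base change'' of the projective generator along $O(\CH_M)\to \C_\chi$), so that by adjunction (tensor-hom) $\Ext^i_{\cR_{(M,\rho)}}(\cF_V, \cR_{(M,\rho)}\otimes_{O(\CH_M)}\delta_\chi) \cong \Ext^i_{O(\CH_M)}(\cF_V, \delta_\chi)$, using flatness of $\cR_{(M,\rho)}$ over $O(\CH_M)$ (which follows from the free decomposition $\cR_{(M,\rho)} = \bigoplus_\psi O(\CH_M)\nu_\psi$ and its general-$M$ analogue). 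Establishing this identification of $i_M^G(\chi\rho)$ with a base change of $\Psi_G(M,\rho)$ — i.e. that $i_M^G(\chi\rho) = \Psi_G(M,\rho) \otimes_{\cR_{(M,\rho)}} (\cR_{(M,\rho)} \otimes_{O(\CH_M)} \delta_\chi)$, equivalently $\Psi \otimes_{O(\CH_M)} \delta_\chi$ — is itself a short computation with parabolic induction commuting with the colimit/tensor, but getting the module structures exactly right is where care is needed.
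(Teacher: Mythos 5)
There is a genuine gap, and it sits at the computational heart of your plan. You assert that $\Hom_{G}(\Psi_{G}(M,\rho),i_M^G(\chi\rho))$ is ``a one-dimensional space on which $O(\CH_M)$ acts through evaluation at $\chi$, i.e. $\delta_\chi$.'' This is false in general. By the second adjointness, $\Hom_{G}(\bar i_M^G(\rho\otimes O(\CH_M)),i_M^G(\chi\rho))\cong \Hom_M(\rho\otimes O(\CH_M),r_M^G i_M^G(\chi\rho))$, and the geometric lemma together with cuspidality of $\rho$ shows this space has dimension at least $|\fI_\rho|$ (already in the case $M=G$ it is $|\fI_\rho|$, supported as an $O(\CH_M)$-module on the whole coset $\fI_\rho\chi$, not at $\chi$ alone), with further contributions for $M\neq G$ from Weyl group elements stabilizing the inertial class $(M,[\rho])$. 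The fact that this module is an \emph{induced-type} module of higher rank, rather than $\delta_\chi$, is exactly why passing from $\Ext$ over the noncommutative endomorphism algebra to $\Ext$ over $O(\CH_M)$ is nontrivial; this is the step the paper handles with Proposition \ref{prop:et} (an \'etale base change splitting the twisted group algebra $\cR_{(M,\rho)}=\bigoplus_{\psi\in\fI_\rho}O(\fX_M)\nu_\psi$ into a matrix algebra) together with Lemma \ref{lem:Azum}. Your fallback route in the last paragraph partially recognizes the correct structure, but it invokes tensor--hom adjunction in the wrong direction: extension of scalars $\cR\otimes_{O}(-)$ is \emph{left} adjoint to restriction, so it controls maps \emph{out of} $\cR\otimes_O\delta_\chi$, not $\Ext^i_{\cR}(\cF_V,\cR\otimes_O\delta_\chi)$; to compute maps into it you would need the coinduced module $\Hom_O(\cR,\delta_\chi)$ and an argument (e.g.\ a Frobenius-extension property of the crossed product) identifying it with the induced one.

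A second, related problem is the choice of algebra. The Morita equivalence for the $(M,\rho)$-block of $\cM(G)$ is with modules over $\End_G(\Psi_G(M,\rho))$, which for $M\neq G$ is strictly larger than $\cR_{(M,\rho)}=\End_M(\Psi_M(M,\rho))$ (it contains intertwining operators indexed by Weyl group elements), and no structure theorem for it of the shape of Theorem \ref{thm:cent}(\ref{thm:ber:azu0}) is available in the paper; so your ``block equivalence at the level of $G$'' plan needs substantive structural input that is neither quoted nor proved. The paper avoids this entirely: it first applies derived Frobenius reciprocity to replace $\Ext^i_G(V,i_M^G(\chi\rho))$ by $\Ext^i_M(r_M^G V,\chi\rho)$, then uses the second adjointness to identify $\Hom_M(\Psi_M(M,\rho),r_M^G V)$ with $\cF_V((M,\rho))$, so that all homological algebra happens over $\cR_{(M,\rho)}$, whose crossed-product description over $O(\fX_M)$ is known; the comparison of the two $\Ext$'s is then carried out after the \'etale splitting of Proposition \ref{prop:et}. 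To repair your argument you would either need to reproduce this reduction to the Levi, or prove the missing facts about $\End_G(\Psi_G(M,\rho))$ as an $O(\CH_M)$-algebra (freeness and the identification of $\Hom_G(\Psi_G(M,\rho),i_M^G(\chi\rho))$ as the corresponding coinduced module), which is essentially equivalent in difficulty to the theorem itself.
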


For the proof we need the following standard lemma:

\begin{lemma}\label{lem:Azum}
Let $Z$ be a commutative finitely generated algebra over $\C$ without nilpotent elements. Let $A$ be an algebra over $Z$ and let $M$ be \EitanG{an} $A$-module.
Let $\spec(C)\to\spec(Z)$ be an \et\  map.
Let $\Delta$ be an irreducible $A$-module with annihilator $\fm$. Let $\fm'$ be a maximal ideal of $C$ lying over $\fm$. Let $\Delta':=(\Delta\otimes_Z C)/\fm'$ \EitanG{considered as a $A \otimes_Z C$-module}.
Then $$ \Ext^*_{A}(M,\Delta)\cong \Ext_{A \otimes_Z C}^*(M\otimes_Z C,\Delta').$$

\end{lemma}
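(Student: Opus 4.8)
The plan is to reduce the \'etale-local statement to a faithfully-flat base-change statement together with the fact that the relevant Ext groups are built from finitely generated data, so that localization and completion behave well. First I would observe that because $\Delta$ is irreducible with annihilator $\fm$, it is in particular a finite-dimensional $Z/\fm = \C$-vector space (by the Nullstellensatz, $\fm$ is a maximal ideal and $Z/\fm=\C$), and $\Delta$ is a module over the localization $A_\fm := A\otimes_Z Z_\fm$; all the Ext groups $\Ext^*_A(M,\Delta)$ depend only on $M_\fm$ and $\Delta$ as $A_\fm$-modules, since $\Delta$ is already $\fm$-local. Similarly, writing $\fm' \subset C$ for the chosen maximal ideal over $\fm$, the module $\Delta' = (\Delta\otimes_Z C)/\fm'$ is annihilated by $\fm'$, hence is a module over $(A\otimes_Z C)_{\fm'}$, and $\Ext^*_{A\otimes_Z C}(M\otimes_Z C,\Delta')$ only depends on the $\fm'$-localization.

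Next I would use the key property of an \'etale map: the induced map on completions $\widehat{Z_\fm} \to \widehat{C_{\fm'}}$ is an isomorphism, and more usefully, $Z_\fm \to C_{\fm'}$ is flat with $C_{\fm'}/\fm C_{\fm'} = C_{\fm'}/\fm' = \C$ (unramifiedness plus the residue field being algebraically closed forces the residue extension to be trivial). The strategy is then: take a resolution $P_\bullet \to M_\fm$ by free $A_\fm$-modules, not necessarily finitely generated, but note that since $\Delta$ is finite-dimensional over $\C$, the complex $\Hom_{A_\fm}(P_\bullet,\Delta)$ computes $\Ext^*_A(M,\Delta)$. Applying $-\otimes_{Z_\fm} C_{\fm'}$ to $P_\bullet$ gives a free resolution of $M_\fm\otimes_{Z_\fm}C_{\fm'} = (M\otimes_Z C)_{\fm'}$ over $A_\fm\otimes_{Z_\fm}C_{\fm'}$ — here flatness of $C_{\fm'}$ over $Z_\fm$ is what guarantees exactness is preserved, and $A_\fm\otimes_{Z_\fm}C_{\fm'} = (A\otimes_Z C)_{\fm'}$. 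So I would get
$$\Ext^*_{A\otimes_Z C}(M\otimes_Z C,\Delta') \cong H^*\bigl(\Hom_{(A\otimes_Z C)_{\fm'}}(P_\bullet\otimes_{Z_\fm}C_{\fm'},\Delta')\bigr).$$
The remaining task is to identify the complex $\Hom_{(A\otimes_Z C)_{\fm'}}(P_\bullet\otimes_{Z_\fm}C_{\fm'},\Delta')$ with $\Hom_{A_\fm}(P_\bullet,\Delta)$. For a single free module $P = \bigoplus A_\fm$ this amounts to the identity $\Hom_{(A\otimes C)_{\fm'}}((A\otimes C)_{\fm'}^{(I)},\Delta') \cong \Delta'^{I}$ compared with $\Hom_{A_\fm}(A_\fm^{(I)},\Delta)\cong \Delta^I$, so everything comes down to the claim that $\Delta' = \Delta$ as $\C$-vector spaces (indeed as $A_\fm$-modules via $A_\fm \to (A\otimes C)_{\fm'}$). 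This is exactly where $C_{\fm'}/\fm' = \C$ enters: $\Delta\otimes_Z C = \Delta\otimes_{\C} (C/\fm C)$ after killing $\fm$, and then modding out by $\fm'/\fm C$ — which is the whole maximal ideal of the Artinian local ring $C_{\fm'}/\fm C_{\fm'} = \C$, hence zero — leaves $\Delta$ unchanged.

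The main obstacle I anticipate is bookkeeping the non-finite-generation of $M$: one cannot simply say "tensor a finite free resolution," so I would be careful to phrase the argument as (i) localize everything at $\fm$ resp.\ $\fm'$, (ii) pick an arbitrary free (not f.g.) resolution, (iii) use that $\Hom(-,\Delta)$ commutes with the relevant direct sums because $\Delta$ is finite-dimensional over $\C$ — actually the cleaner route is to note $\Hom_{A_\fm}(A_\fm^{(I)},\Delta) = \prod_I \Delta$, and likewise on the $C$-side, and that the natural comparison map $\prod_I\Delta \to \prod_I\Delta'$ is an isomorphism since $\Delta\to\Delta'$ is. A secondary subtlety is checking $A_\fm\otimes_{Z_\fm}C_{\fm'} = (A\otimes_Z C)_{\fm'}$, which holds because localization commutes with tensor product and $C_{\fm'}$ already inverts the image of $Z\setminus\fm$. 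Once these identifications are in place, taking cohomology of the two identified complexes yields $\Ext^*_A(M,\Delta)\cong\Ext^*_{A\otimes_Z C}(M\otimes_Z C,\Delta')$, completing the proof.
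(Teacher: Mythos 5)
Your proof is correct, but it takes a genuinely different route from the paper's. The paper argues at the level of Ext groups: it first invokes flat base change along $Z\to C$ to get $\Ext^*_{A\otimes_Z C}(M\otimes_Z C,\Delta\otimes_Z C)\cong\Ext^*_A(M,\Delta)\otimes_Z C$, then uses that $\fm$ kills both $\Delta$ and the Ext groups to rewrite $\otimes_Z C$ as $\otimes_{Z/\fm}C/\fm C$, and finally splits off $C/\fm'$ as a direct summand of the fiber $C/\fm C$. You instead localize at $\fm$ and $\fm'$, base-change a (possibly infinite-rank) free resolution along the flat local map $Z_\fm\to C_{\fm'}$, and identify the two Hom complexes termwise via the adjunction $\Hom_{(A\otimes_Z C)_{\fm'}}(P\otimes_{Z_\fm}C_{\fm'},\Delta')\cong\Hom_{A_\fm}(P,\Delta')$ together with the $A$-linear isomorphism $\Delta\cong\Delta'$; the trivial residue extension $C_{\fm'}/\fm C_{\fm'}=\C$ plays for you the role that the splitting of $C/\fm C$ plays in the paper. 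What your route buys: you never form $\Ext^*_A(M,\Delta)\otimes_Z C$, so you never need $\Hom_A(P,\Delta)\otimes_Z C\cong\Hom_A(P,\Delta\otimes_Z C)$ for infinite-rank free $P$ --- a commutation that is automatic only when $C$ is a finite projective $Z$-module or $M$ has a resolution by finitely generated projectives; in the paper this is harmless because the \'etale maps actually used (as in Proposition \ref{prop:et}) are finite, but your termwise comparison works for arbitrary $M$ and arbitrary \'etale $C$, since specializing the coefficient to $\Delta'$ commutes with arbitrary products. Conversely, the paper's argument is shorter, and your initial localization is not strictly necessary: the same adjunction-plus-$\Delta\cong\Delta'$ comparison already works before localizing. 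Two minor caveats: your aside that $\Delta$ is finite-dimensional over $\C$ does not follow from the stated hypotheses (and is not actually used in your argument), and both your proof and the paper's implicitly use that $\fm$ is a maximal ideal with residue field $\C$ (so that $Z/\fm=\C$ and $C/\fm'=\C$), which is the case in the situations where the lemma is applied.
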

\begin{proof}
Since $C$ is flat over $Z$ we have isomorphism of $C$ modules $$\Ext_{A \otimes_Z C}^*(M\otimes_Z C,\Delta \otimes_Z C)\cong \Ext^*_{A}(M,\Delta)\otimes_Z C.$$
\EitanG{Since $\fm$ annihilates $\Delta$ and $\Ext^*_{A}(M,\Delta)$, we have $$\Delta \otimes_Z C \cong \Delta \otimes_{Z/\fm} C/\fm \text{ and } \Ext^*_{A}(M,\Delta) \otimes_Z C \cong \Ext^*_{A}(M,\Delta)\otimes_{Z/\fm} C/\fm.$$
Thus,} we obtain $$\Ext_{A \otimes_Z C}^*(M\otimes_Z C,\Delta \otimes_{Z/\fm} C/\fm)\cong \Ext^*_{A}(M,\Delta)\otimes_{Z/\fm} C/\fm.$$
Since \RamiE{$C/\fm'$} is a direct summand of $C/\fm$ we obtain
$$\Ext_{A \otimes_Z C}^*(M\otimes_Z C,\Delta \otimes_{Z/\fm} C/\fm')\cong \Ext^*_{A}(M,\Delta)\otimes_{Z/\fm} C/\fm'.$$
This implies the assertion.
\end{proof}

We will also need the following proposition:
\begin{proposition}\label{prop:et}
Let $\rho$ be a \RamiG{an irreducible cuspidal} representation of $G$, recall that $$\fI_\rho:=\{\chi\in\fX_G|\chi \rho\simeq \rho\}.$$
 Then there exists an onto \et\ map $\spec(C)\to\spec(Z(\cR_{(G,\rho)}))$ such that
 the triple $$C\subset C\otimes_{Z(\cR_{(G,\rho)})}O(\fX_G)\subset  C\otimes_{Z(\cR_{(G,\rho)})}\cR_{(G,\rho)}$$ is isomorphic to $$C \subset\left\{\left\zerodel  \begin{pmatrix}f_{1} & \ &  \\
 & \ddots &  \\
 & & f_{n} \\
\end{pmatrix}\right| f_i \in C \right\}\subset Mat_n(C).$$
\end{proposition}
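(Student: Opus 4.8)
The plan is to realize $\cR:=\cR_{(G,\rho)}$ as an Azumaya algebra over its center and to use the torus $\fX_G$ itself as the splitting \et\ cover. Write $\cR=\bigoplus_{\psi\in\fI_\rho}O(\fX_G)\nu_\psi$ as in Theorem~\ref{thm:cent}\eqref{thm:ber:azu0}, with $\nu_\psi f=f_\psi\nu_\psi$ and $\nu_\psi\nu_{\psi'}=c_{\psi,\psi'}\nu_{\psi\psi'}$; we may normalize $\nu_e=1$, and we set $n:=|\fI_\rho|$. First I would compute the center. The finite group $\fI_\rho\subset\fX_G$ acts on the torus $\fX_G$ by translation, hence \emph{freely}; since moreover $O(\fX_G)$ is a domain, the relations $\nu_\psi f=f_\psi\nu_\psi$ force a central element $\sum_\psi f_\psi\nu_\psi$ to satisfy $f_\psi=0$ for $\psi\neq e$ and $f_e\in O(\fX_G)^{\fI_\rho}$. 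Hence $Z:=Z(\cR)=O(\fX_G)^{\fI_\rho}=O(\fX_G/\fI_\rho)$, the map $q\colon\fX_G\to\fX_G/\fI_\rho$ is a finite \et\ $\fI_\rho$-Galois covering, $O(\fX_G)$ is a maximal commutative subalgebra of $\cR$ (its centralizer being itself) that is \et\ over $Z$ of rank $n$, and $\cR=\bigoplus_\psi\nu_\psi O(\fX_G)$ is free of rank $n$ as a right $O(\fX_G)$-module with basis $\{\nu_\psi\}_{\psi\in\fI_\rho}$.

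Next I would check that $\cR$ is Azumaya over $Z$ of degree $n$: it is finite projective of rank $n^2$ over the Noetherian ring $Z$, and over each closed point of $\spec Z$ its fibre is the crossed product of $O(\fX_G)\otimes_Z\C\cong\C^n$ by $\fI_\rho$ acting freely and transitively on the $n$ idempotents; picking one such idempotent (which is then full) shows this fibre is Morita equivalent to $\C$, hence isomorphic to $Mat_n(\C)$. (Equivalently, one may invoke the standard fact that a crossed product of a Galois extension of commutative rings is Azumaya over the base.)

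For the splitting itself I would take $C:=O(\fX_G)$, with structure morphism $\spec C\to\spec Z$ equal to $q$; this is \et\ and onto. Since $q$ is $\fI_\rho$-Galois, the map $C\otimes_Z O(\fX_G)=O(\fX_G)\otimes_Z O(\fX_G)\to\prod_{\psi\in\fI_\rho}O(\fX_G)$, $f\otimes g\mapsto(f\,g_\psi)_\psi$, is an isomorphism, so $C\otimes_Z O(\fX_G)\cong C^{n}$ as a $C$-algebra. Because $O(\fX_G)$ is a maximal commutative $Z$-separable subalgebra of the Azumaya algebra $\cR$, the natural map $\cR\otimes_Z C\to\mathrm{End}_C(\cR)$, $r\otimes c\mapsto(x\mapsto rxc)$, with $\cR$ viewed as a right $C$-module via $C=O(\fX_G)\subset\cR$, is an isomorphism of $\cR$-algebras; the basis $\{\nu_\psi\}$ identifies the right-hand side with $Mat_n(C)$, so $C\otimes_Z\cR\cong Mat_n(C)$. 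Finally, from $\nu_\psi f=f_\psi\nu_\psi$ one gets $f\nu_\psi=\nu_\psi f_{\psi^{-1}}$, so for $f\in O(\fX_G)\subset\cR$, $c\in C$ and $x=\sum_\psi\nu_\psi g_\psi$ one computes $fxc=\sum_\psi\nu_\psi\,(f_{\psi^{-1}}c)\,g_\psi$; thus $f\otimes c$ acts on the coefficient vector $(g_\psi)_\psi$ by the diagonal matrix $\mathrm{diag}((f_{\psi^{-1}}c)_\psi)$. Hence $C\otimes_Z O(\fX_G)$ lands in the diagonal subalgebra of $Mat_n(C)$, and in fact onto it, since the resulting map $f\otimes c\mapsto(f_{\psi^{-1}}c)_\psi$ differs from the Galois isomorphism above only by the flip of the tensor factors and a reindexing of the product, hence is again an isomorphism; and $C$ itself, included via $c\mapsto c\otimes 1$, goes to the scalar matrices. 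This produces exactly the chain $C\subset\{\mathrm{diag}(f_1,\dots,f_n):f_i\in C\}\subset Mat_n(C)$ of the statement.

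The step I expect to be the main obstacle is this last compatibility — producing a single \et\ cover and a single trivialization $C\otimes_Z\cR\cong Mat_n(C)$ that both splits the Azumaya algebra $\cR$ \emph{and} diagonalizes $C\otimes_Z O(\fX_G)$, in spite of the possibly non-trivial cocycle $c_{\psi,\psi'}$ governing $\cR$. The point is not to pass to any coarser cover: taking $C$ to be $\fX_G$ over $\fX_G/\fI_\rho$ makes the \et\ subalgebra $O(\fX_G)$ completely split over $C$, and the explicit right-$O(\fX_G)$-module basis $\{\nu_\psi\}$ of $\cR$ supplies matrix units realizing both assertions simultaneously, the cocycle $c$ surviving only inside the harmless identification $\mathrm{End}_C(\cR)\cong Mat_n(C)$.
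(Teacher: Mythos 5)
Your proposal is correct and follows essentially the same route as the paper: both take $C=O(\fX_G)$ with the structure map $\spec(C)\to\spec(Z(\cR_{(G,\rho)}))=\spec(O(\fX_G/\fI_\rho))$, split the middle term by the same Galois/translation isomorphism, and realize $C\otimes_{Z(\cR_{(G,\rho)})}\cR_{(G,\rho)}$ as endomorphisms of a free rank-$n$ $C$-module using the basis $\{\nu_\psi\}$, with the cocycle $c_{\psi,\psi'}$ absorbed into the trivialization. The only divergence is in justifying that this trivialization is an isomorphism: the paper writes the map explicitly and checks surjectivity fibrewise via Burnside's theorem, while you establish that $\cR_{(G,\rho)}$ is Azumaya over its center and quote the splitting theorem for a maximal commutative \et\ subalgebra --- both arguments are legitimate and both rest on the invertibility of the $\nu_\psi$ (nonvanishing of the $c_{\psi,\psi'}$) coming from Bernstein's theorem.
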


\begin{proof}
By Theorem \ref{thm:cent}(\ref{thm:ber:azu0}) we have $$\cR_{(G,\rho)}= \bigoplus_{\psi\in \fI_\rho}O(\fX_G)\nu_\psi,$$
Thus the center of $\cR_{(G,\rho)}$ is $\cO(\fX_G/\fI_\rho) \subset \cO(\fX_G).$ 
Let $C=O(\fX_G)$. Let $$\phi:\ O(\fX_{G}) \otimes_{O(\fX_{G}/\RamiE{\fI}_{\rho})} O(\fX_{G}) \to O(\fX_{G} \times \RamiE{\fI}_{\rho})$$ be the map given by $\phi(f_{1} \otimes f_{2})(x,j)=f_{1}(x)f_{2}(jx).$ It is easy to see that $\phi$\ is an isomorphism. This yields an identification  $$C\otimes_{Z(\cR_{(G,\rho)})}\cR_{(G,\rho)} \cong\bigoplus_{\psi\in \fI_\rho} O(\fX_{G} \times \RamiE{\fI}_{\rho})  \nu_\psi.$$
Define a map $\mu: \bigoplus_{\psi\in \fI_\rho} O(\fX_{G} \times \RamiE{\fI}_{\rho})  \nu_\psi\to \End_{O(\fX_{G} )}(O(\fX_{G} \times \RamiE{\fI}_{\rho}))$  by
\begin{equation}
\mu(\nu_\psi)(f)(\chi,\psi')=c_{\psi,\psi'}f(\chi,\psi^{{-1}}\psi')
\end{equation}
 and for $g\in  O(\fX_{G} \times \RamiE{\fI}_{\rho})$
\begin{equation}\label{eq:cart}
\mu(g)(f)(\chi,\psi)=g(\chi,\psi)f(\chi,\psi).
\end{equation}
It remains to show that $\mu$ is an isomorphism. Fixing a point $\chi \in \fX_G$,
Burnside's theorem on matrix algebras implies that the specialization $$\mu|_\chi:\left\zerodel\left(C\otimes_{Z(\cR_{(G,\rho)})}\cR_{(G,\rho)}\right)\right|_\chi \to \End(\C[\fI_\rho]) $$ is onto, and hence an isomorphism.  This implies that $\mu$ is isomorphism.

\end{proof}

\begin{proof}[{Proof of Theorem \ref{lem: mutl_via_fiber}}]
\RamiD{We will compute the left and the right hand side of the desired isomorphism and see that \RamiE{they are} the same. We start with the left hand side.

}
Deriving the Frobenius reciprocity we have
\RamiD{$$\Ext_{G}^{i}(V,i_M^G(\chi \rho))=  \Ext_{M}^{i}(r_M^G(V),\chi \rho).$$}
By 
{Theorem} \ref{thm:cent} \eqref{Ber.cent:3}

\EitanB{
$$
\Ext_{M}^{i}(r_M^G(V),\chi \rho)= \Ext^{i}_{\cR_{(\RamiG{M},\rho)}}(\Hom_M(\Psi_{M}(M,\rho), r_M^G(V)),\Hom_M(\Psi_{M}(M,\rho), \chi \rho)),
$$
}

\EitanB{Note that by the second adjointness theorem (Theorem \ref{thm:cent} \eqref{Ber.cent:2.5}) we have $$\Hom_M(\Psi_{M}(M,\rho), r_M^G(V))=\cF_{V}((M,\rho)).$$}

Furthermore, \EitanB{$\Delta:=\Hom_M(\Psi_{M}(M,\rho), \chi \rho)$} is an irreducible module over ${\cR_{(\RamiG{M},\rho)}}$.
Let $C$ be as in Proposition \ref{prop:et} \RamiG{(applied for the group $M$)}. Let $\Delta'$ be a ${\cR_{(\RamiG{M},\rho)}}':={\cR_{(\RamiG{M},\rho)}}\otimes_{Z(\cR_{(\RamiG{M},\rho)})}C$-module which coincides with $\Delta$ as  a $Z(\cR_{(\RamiG{M},\rho)})$-module. By Lemma \ref{lem:Azum} we have

{
$$\Ext^{i}_{\cR_{(\RamiG{M},\rho)}}(\cF_{V}((M,\rho)),\Delta)\cong \Ext^{i}_{{\cR_{(\RamiG{M},\rho)}}'}(\cF_{V}((M,\rho))\otimes_{Z(\cR_{(G,\rho)})}C,\Delta').$$
}

Identify ${\cR_{(\RamiG{M},\rho)}}'\cong Mat_n(C)$ as in Proposition \ref{prop:et}.  We get that there is a $C$-module \EitanB{$T$  \RamiG{such that} $\cF_{V}((M,\rho))\otimes_{Z(\cR_{(\RamiG{M},\rho)})}C=T^{n}$.} This implies that

\EitanB{
$$\Ext^{i}_{{\cR_{(\RamiG{M},\rho)}}'}(\cF_{V}((M,\rho))\otimes_{Z(\cR_{(\RamiG{M},\rho)})}C,\Delta')\cong \Ext^{i}_{C}(\EitanB{T},\delta_{\chi'}),$$
}
where $\chi'$ is the character with which $C$ acts on $\Delta'$.

\RamiD{To compute the right hand side we}
\EitanB{apply Lemma \ref{lem:Azum}  again and obtain

$$\RamiK{\Ext}^{i}_{O(\CH_M)}(\cF_{V}((M,\rho)),\delta_\chi)\cong
\RamiK{\Ext}^{i}_{O(\CH_M) \otimes_{Z(\cR_{(\RamiG{M},\rho)})}C}(\cF_{V}((M,\rho))\otimes_{Z(\cR_{(\RamiG{M},\rho)})}C,\delta_{\chi''}),$$
}
\RamiD{where $\chi''$ is a character of $O(\CH_M) \otimes_{Z(\cR_{(\RamiG{M},\rho)})}C$ whose restriction to $C$ is $\chi'$.
Since $\cF_{V}((M,\rho))\otimes_{Z(\cR_{(G,\rho)})}C=T^{n}$ we get
$$\RamiK{\Ext}^{i}_{O(\CH_M) \otimes_{Z(\cR_{(\RamiG{M},\rho)})}C}(\cF_{V}((M,\rho))\otimes_{Z(\cR_{(\RamiG{M},\rho)})}C,\delta_{\chi''})\cong \RamiK{\Ext}^{i}_{C}(\EitanB{T},\delta_{\chi'}).$$}

\end{proof}

\section{The Euler Characteristics}\label{sec:eu}
Recall that the homological dimension of  the abelian category $\cM(G)$ is finite (See \cite[Theorem 29]{BerLec}).  This allows us to give the following \EitanG{notation:
\begin{notn}[cf. \cite{Pra}] $ $
Let $\pi, \tau \in \cM(G)$. Define $$EP_G(\pi,\tau)=\sum (-1)^i \dim \Ext^i_{\RamiE{G}}(\pi, \tau).$$
\end{notn}
In order to formulate the main result of this section we need:

\begin{defn}
We will call $V \in \cM(G)$ locally finitely generated if
for any compact open $K<G$ the module  $V^K$ is finitely generated over $\cH(G,K)$.  
\end{defn}
}

\RamiD{The following is a generalization of Theorem \ref{thm:intro.ep}.}
\begin{theorem}\label{thm:ep}
Let $\bfP$ be a \RamiG{parabolic subgroup} of $\bfG$ and  $\bfM$ be its Levi factor. Let $\rho$ be \RamiG{an irreducible cuspidal} representation of $M=\bfM(F)$ and  $\chi \in \RamiD{\CH_M}$ be an unramified character of $M$. \EitanG{Let $V \in \cM(G)$ locally finitely generated  representation.

Then
$EP_G(V,ind_M^G(\chi \cdot \rho))$ is constant as a function of $\chi$.}
\end{theorem}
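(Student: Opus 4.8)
\textbf{Proof plan for Theorem \ref{thm:ep}.}

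The plan is to reduce the statement to a statement about coherent sheaves on the torus $\CH_M$ of unramified characters via Theorem \ref{lem: mutl_via_fiber}, and then invoke the fact that Euler characteristics of $\Ext$ against skyscrapers are locally constant (indeed constant on a connected component) for a coherent sheaf over a smooth variety. First I would apply Theorem \ref{lem: mutl_via_fiber} with this $V$, which gives
$$\Ext^i_G(V, i_M^G(\chi\rho)) \cong \Ext^i_{O(\CH_M)}(\cF_V((M,\rho)), \delta_\chi)$$
for every $\chi$. Passing to the alternating sum this already rewrites $EP_G(V, i_M^G(\chi\rho))$ as $\sum(-1)^i\dim \Ext^i_{O(\CH_M)}(\cF_V((M,\rho)),\delta_\chi)$; note also that $i_M^G$ and $ind_M^G$ agree up to an unramified twist (which only relabels $\chi$), so working with the normalized induction is harmless. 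The key remaining point is that this alternating sum is independent of $\chi$.

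Next I would check that $\cF_V((M,\rho))$ is a \emph{coherent} sheaf on $\CH_M$ — equivalently a finitely generated $O(\CH_M)$-module — under the hypothesis that $V$ is locally finitely generated. Since $\Psi_G(M,\rho) = \overline i^G_M(\rho\otimes O(\CH_M))$ is a projective generator of a direct summand $\cM(G)_{(M,\rho)}$ of $\cM(G)$ lying in $\cM(G,K)$ for a suitable splitting subgroup $K$, the module $\cF_V((M,\rho)) = \Hom_G(\Psi_G(M,\rho),V)$ is computed inside $\cM(G,K) \cong \cH(G,K)\text{-mod}$ as a $\Hom$ out of a finitely generated module; since $\cH(G,K)$ is Noetherian (Theorem \ref{thm:cent}\eqref{Ber.cent:not}) and $V^K$ is finitely generated, this $\Hom$ is finitely generated over $\End_G(\Psi_G(M,\rho))$, hence over its central subalgebra $O(\CH_M/\fI_\rho)$ and hence over $O(\CH_M)$ (which is finite over the latter). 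So $\cF_V((M,\rho))$ is a coherent sheaf on the smooth affine variety $\CH_M$ (a torus).

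Finally, for a coherent sheaf $\cF$ on a smooth variety, the function $\chi \mapsto \sum_i (-1)^i \dim \Ext^i_{O(\CH_M)}(\cF, \delta_\chi)$ is locally constant: take a bounded finite locally-free resolution $P_\bullet \to \cF$ (possible since $O(\CH_M)$ has finite global dimension — $\CH_M$ is smooth), then $\Ext^i(\cF,\delta_\chi)$ is the $i$-th cohomology of $\Hom(P_\bullet,\delta_\chi)$, whose Euler characteristic equals $\sum_i (-1)^i \operatorname{rank}(P_i)$, a constant independent of $\chi$. Since $\CH_M$ is connected (it is a torus, hence irreducible), ``locally constant'' upgrades to ``constant'', giving the theorem. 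The main obstacle I anticipate is the coherence/finite-generation step: one must make sure that the Bernstein-center action threading $O(\CH_M) \subset \cR_{(M,\rho)} = \End_M(\Psi_M(M,\rho)) \hookrightarrow \End_G(\Psi_G(M,\rho))$ is such that finite generation over $\cH(G,K)$ really does pass to finite generation over $O(\CH_M)$ — this uses that $\cR_{(M,\rho)}$ is finite over its center $O(\CH_M/\fI_\rho)$, which is itself finite over $O(\CH_M)$ under the natural quotient, together with Noetherianity; once coherence is secured, the sheaf-theoretic Euler-characteristic argument is routine.
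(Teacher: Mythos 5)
Your proposal follows the paper's own route: reduce via Theorem \ref{lem: mutl_via_fiber} to computing $\sum_i(-1)^i\dim\Ext^i_{O(\CH_M)}(\cF_V((M,\rho)),\delta_\chi)$, prove that $\cF_V((M,\rho))$ is coherent using local finite generation of $V$, Noetherianity of $\cH(G,K)$ and the projective-generator structure of the Bernstein block, and conclude by the finite locally-free-resolution argument on the smooth connected variety $\CH_M$. The only divergence is bookkeeping in the coherence step, where the paper changes projective generators by passing through the block summand of $\Sc(G/K)$ and Lenzing's finite-presentation theorem, while you invoke the Morita equivalence for $\Psi_G(M,\rho)$ directly and rest, as the paper implicitly does, on finiteness of $\End_G(\Psi_G(M,\rho))$ over $O(\CH_M)$ — the same underlying facts.
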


This theorem follows from Theorem \ref{lem: mutl_via_fiber} and the following 
\EitanG{propositions:}
\begin{proposition}
Let $\cF$ be a coherent \RamiD{sheaf} over \RamiD{a} (complex) smooth algebraic variety $X$. Then the function $x\mapsto \sum (-1)^i \dim \RamiK{\Ext^i_{O_X}}(\cF, \delta_x)$ is \RamiD{a} locally constant function on $X$.
\end{proposition}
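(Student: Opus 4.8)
The plan is to reduce the statement to the well-known fact that the Euler characteristic of a coherent sheaf is locally constant, by replacing $\delta_x$ with a complex of free modules of finite rank, at least locally on $X$. First I would fix a point $x_0 \in X$ and pass to an affine open neighborhood $U = \operatorname{Spec}(A)$ of $x_0$, where $A$ is a regular (in particular Noetherian) $\C$-algebra of finite type; since $\Ext$ and dimensions of fibers are local, it suffices to prove the claim on $U$. On $U$ the sheaf $\cF$ corresponds to a finitely generated $A$-module $N$, and $\delta_x$ corresponds to the residue field $A/\fm_x$.

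The key computational input is that, because $A$ is regular of dimension $d = \dim X$, the residue module $A/\fm_x$ admits a finite free resolution over $A$ of length exactly $d$ — for instance the Koszul complex $K_\bullet$ on a regular system of parameters at $x$, or more uniformly after shrinking $U$ so that $\fm_{x_0}$ is generated by a regular sequence of length $d$. Then $\Ext^i_{O_X}(\cF,\delta_x) = \Ext^i_A(N, A/\fm_x)$ is computed by the complex $\Hom_A(K_\bullet, N)$, i.e. a Koszul-type cochain complex whose terms are $N^{\binom{d}{i}}$. Taking Euler characteristics of this complex termwise, and using that the Euler characteristic of a bounded complex equals the alternating sum of the dimensions of its cohomology, one gets
$$\sum_i (-1)^i \dim_\C \Ext^i_{O_X}(\cF,\delta_x) = \sum_i (-1)^i \binom{d}{i} \dim_\C (N \otimes_A \kappa(x)) = 0$$
whenever $\dim X = d > 0$, and equals $\dim_\C(N \otimes_A \kappa(x))$ when $d = 0$. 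In either case, what remains to be shown is that $x \mapsto \dim_\C(N\otimes_A\kappa(x))$, the fiber dimension of the coherent sheaf $\cF$, is locally constant — but this is false in general (it is only upper semicontinuous), so the naive termwise argument needs correction: the point is that the alternating sum over the whole complex is insensitive to this, since the homology in each degree already absorbs the jumps.

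The cleaner route, which I would actually write up, avoids fiber-dimension issues entirely: locally resolve $N$ itself by a finite complex $F_\bullet$ of finitely generated free $A$-modules (possible since $A$ is regular, hence of finite global dimension, so every finitely generated module is perfect). Then $\Ext^i_A(N, A/\fm_x)$ is the cohomology of $\Hom_A(F_\bullet, A/\fm_x) = F_\bullet^\vee \otimes_A \kappa(x)$, a complex of finite-dimensional $\kappa(x)$-vector spaces whose terms have $x$-independent dimensions (the ranks of the free modules in $F_\bullet$). Hence $\sum_i (-1)^i \dim \Ext^i_{O_X}(\cF,\delta_x)$ equals the alternating sum of these fixed ranks, which is manifestly constant on $U$, proving local constancy. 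The main (and essentially only) obstacle is the bookkeeping needed to ensure such a finite free resolution exists with ranks locally independent of the base point, which is exactly the regularity/finite-global-dimension hypothesis on $X$; everything else is formal homological algebra.
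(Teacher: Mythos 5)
Your ``cleaner route'' is correct and is essentially the paper's own argument: the paper invokes the finite homological dimension of a smooth variety to reduce to locally free $\cF$ (where $\Ext^i_{O_X}(\cF,\delta_x)$ vanishes for $i>0$ and $\Ext^0$ has dimension equal to the rank), and your local finite free resolution $F_\bullet$ of $N$ with $x$-independent ranks is the same reduction, just phrased through the resolution rather than through the sheaf. Only note that the discarded first paragraph is not merely ``in need of correction'' but wrong: $\Hom_A(K_\bullet,N)$ for $K_\bullet$ a free resolution of $\kappa(x)$ computes $\Ext^*_A(\kappa(x),N)$ rather than $\Ext^*_A(N,\kappa(x))$, its terms are not finite dimensional, and the asserted vanishing of the Euler characteristic for $\dim X>0$ already fails for $\cF=O_X$; since you abandon that route, the proof you actually propose stands.
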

\EitanG{
\begin{proof}
Since $X$ have finite homological dimension, it is enough to prove the proposition for localy free $\cF$. In this case the proposition is trivial.
\end{proof}
\begin{proposition}
Let $V \in \cM(G)$ be a locally finitely generated  representation and let $(M,\rho)$ be a cuspidal data. Then $\cF_V((M,\rho))$ is coherent.
\end{proposition}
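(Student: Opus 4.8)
The plan is to show that $\cF_V((M,\rho)) = \Hom_G(\Psi_G(M,\rho), V)$ is a finitely generated module over $\cR_{(M,\rho)} = \End_M(\Psi_M(M,\rho))$, and hence a coherent sheaf over $\CH_M$ since $\cR_{(M,\rho)}$ is a finite module over the finitely generated commutative ring $O(\CH_M)$ (this follows from Theorem \ref{thm:cent}\eqref{thm:ber:azu0} and its analogue for general Levi $M$, which presents $\cR_{(M,\rho)}$ as a finite sum of copies of $O(\CH_M)$). First I would reduce to a single Bernstein block: by Theorem \ref{thm:cent}\eqref{Ber.cent:3}, $\Psi_G(M,\rho)$ is a projective generator of a direct summand $\cM_{(M,\rho)}(G)$ of $\cM(G)$, so $\Hom_G(\Psi_G(M,\rho), V) = \Hom_G(\Psi_G(M,\rho), V')$ where $V'$ is the projection of $V$ onto that block; moreover $V'$ is still locally finitely generated, being a direct summand of $V$.

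Next I would pass to the Hecke-algebra description. Choose a splitting subgroup $K$ (Theorem \ref{thm:cent}\eqref{Ber.cent:basis}) small enough that $\Psi_G(M,\rho)^K$ generates $\Psi_G(M,\rho)$ as a $G$-representation and that $\cM_{(M,\rho)}(G) \subset \cM(G,K)$; such $K$ exists because $\Psi_G(M,\rho)$ is a finitely generated object (it is a projective generator of a block, and by Theorem \ref{thm.f.gen}-type finiteness / the Bernstein theory the block is generated by a finitely generated projective). Under the equivalence $W \mapsto W^K$ of Theorem \ref{thm:cent}\eqref{Ber.cent:2}, we get
$$\cF_V((M,\rho)) = \Hom_G(\Psi_G(M,\rho), V') \cong \Hom_{\cH(G,K)}\bigl(\Psi_G(M,\rho)^K, (V')^K\bigr),$$
compatibly with the $\cR_{(M,\rho)}$-action (which is just right multiplication by $\End_{\cH(G,K)}(\Psi_G(M,\rho)^K)$).

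Then the argument is: $\cH(G,K)$ is Noetherian (Theorem \ref{thm:cent}\eqref{Ber.cent:not}), $P := \Psi_G(M,\rho)^K$ is a finitely generated projective $\cH(G,K)$-module, and $(V')^K$ is a finitely generated $\cH(G,K)$-module by hypothesis. Hence $\Hom_{\cH(G,K)}(P, (V')^K)$ is a finitely generated module over $\End_{\cH(G,K)}(P)$ — indeed $P$ is a direct summand of a free module $\cH(G,K)^m$, so $\Hom(P, (V')^K)$ is a direct summand of $\Hom(\cH(G,K)^m, (V')^K) = ((V')^K)^m$, which is finitely generated over $\cH(G,K)$ and a fortiori over the (commutative, finitely generated) subalgebra $O(\CH_M) \subset \End_{\cH(G,K)}(P) = \cR_{(M,\rho)}$, using that $\cR_{(M,\rho)}$ is module-finite over $O(\CH_M)$. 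A finitely generated $O(\CH_M)$-module is exactly a coherent sheaf on the affine variety $\CH_M$, which is the claim.

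The main obstacle I anticipate is the bookkeeping around choosing $K$: one must ensure simultaneously that $\Psi_G(M,\rho)$ lives in $\cM(G,K)$, that $\Psi_G(M,\rho)^K$ is finitely generated projective over $\cH(G,K)$, and that the $\cR_{(M,\rho)}$-module structure is faithfully transported through the equivalence of categories. All three are standard consequences of the Bernstein decomposition, but making the compatibility of the module structures precise (rather than just the vector-space isomorphism) requires a little care; none of it should be genuinely difficult. A secondary point is citing the correct version of Theorem \ref{thm:cent}\eqref{thm:ber:azu0} for a proper Levi $M$ in place of $G$, to know $\cR_{(M,\rho)}$ is finite over $O(\CH_M)$ — but this is exactly the structure theory already invoked in Proposition \ref{prop:et}.
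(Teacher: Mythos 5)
Your overall strategy (project $V$ to the Bernstein block of $(M,\rho)$, pass to $K$-invariants for a splitting subgroup $K$, and exploit Noetherianity of $\cH(G,K)$ together with finite generation of $(V')^K$) is the same circle of ideas as the paper's proof, but the decisive step is not correctly justified. The clause asserting that $((V')^K)^m$ is ``finitely generated over $\cH(G,K)$ and a fortiori over the subalgebra $O(\CH_M)$'' is backwards: finite generation over a ring never descends for free to a subalgebra (it goes the other way), and indeed $\cH(G,K)$ itself is cyclic over $\cH(G,K)$ while its finiteness over any commutative subalgebra is a genuine theorem, not a formality. A second problem is that $\Hom_{\cH(G,K)}(P,(V')^K)$ carries no natural $\cH(G,K)$-module structure, so the embedding into $\Hom_{\cH(G,K)}(\cH(G,K)^m,(V')^K)\cong ((V')^K)^m$ as a direct summand is only equivariant for the Bernstein center (equivalently, it identifies $\Hom(P,(V')^K)$ with $e\cdot ((V')^K)^m$ for an idempotent $e\in Mat_m(\cH(G,K))$, a module over $eMat_m(\cH(G,K))e=\End(P)$); finite generation of $((V')^K)^m$ over $Mat_m(\cH(G,K))$ does not formally imply finite generation of $e\cdot((V')^K)^m$ over $e Mat_m(\cH(G,K))e$ unless $e$ is full on the relevant block, i.e.\ unless you genuinely invoke that $P$ is a progenerator of that direct factor and use Morita theory. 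Finally, $\End_{\cH(G,K)}(P)\cong\End_G(\Psi_G(M,\rho))$ is in general strictly larger than (the image of) $\cR_{(M,\rho)}=\End_M(\Psi_M(M,\rho))$ --- it contains intertwining operators attached to the stabilizer of the cuspidal datum --- so the finiteness you quote ($\cR_{(M,\rho)}$ module-finite over $O(\CH_M)$) is about the wrong ring; what your route needs is that $\End_G(\Psi_G(M,\rho))$ is module-finite over $O(\CH_M)$, which is true (it is finite over the center of the block) but is exactly the kind of input you have not cited.

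There are two standard ways to plug the hole, and the paper takes a version of the first. (i) Use the Morita equivalence $W\mapsto\Hom_G(\Sc(G/K)_{(M,\rho)},W)$ for the block, under which $(V')^K$ finitely generated over the Noetherian ring $\End_G(\Sc(G/K)_{(M,\rho)})$ is even finitely presented, and then transfer finite presentation from this progenerator to the other progenerator $\Psi_G(M,\rho)$; this transfer is precisely the citation of Lenzing \cite[Satz 3]{Len} in the paper, after which finiteness of $\End_G(\Psi_G(M,\rho))$ over $O(\fX_M)$ concludes. (ii) Alternatively, invoke Bernstein's theorem that $\cH(G,K)$ is a finite module over its center; then $(V')^K$ is finitely generated over the (Noetherian, commutative) center, your direct-summand embedding is center-equivariant, so $\Hom(P,(V')^K)$ is finitely generated over the center of the block, which acts through $O(\CH_M)$. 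As written, your argument contains neither ingredient, so the key finite-generation claim over $\End_{\cH(G,K)}(P)$ (and hence over $O(\CH_M)$) is unproved.
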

\begin{proof}

Denote the category generated by $\Psi_G(M,\rho)$ by $\cM_{(M,\rho)}(G)$. 
Since $\Psi_G(M,\rho)$ is finitely generated there exist \EitanH{an} open compact subgroup $K<G$ such that $\cM_{(M,\rho)}(G) \subset \cM_{K}(G).$ \EitanH{By Theorem \ref{thm:cent}(\ref{Ber.cent:basis}) we can assume that $K$ is splitting.}

Let $V_{(M,\rho)}$ be the maximal subrepresentation of $V$ which is contained in $\cM_{(M,\rho)}(G)$ and
let $\Sc(G/K)_{(M,\rho)}$ be the maximal subrepresentation of $\Sc(G/K)$  which is contained in  $\cM_{(M,\rho)}(G)$. By \EitanH{Theorem \ref{thm:cent}(\ref{Ber.cent:1}, \ref{Ber.cent:3}),} the object  $\Sc(G/K)_{(M,\rho)}$ is a projective genarator of $\cM_{(M,\rho)}(G)$. Since $V$ is
locally finitely generated,  $V_{(M,\rho)}^K=\RamiK{\Hom_G}(\Sc(G/K),V_{(M,\rho)})$ is finitely generated over
$\cH_K(G)=\End(\Sc(G/K))$. Thus it is finitely generated over $\End(\Sc(G/K)_{(M,\rho)}).$
Thus, by Theorem \ref{thm:cent}(\ref{Ber.cent:not}) it is finitely presented over $\End(\Sc(G/K)_{(M,\rho)})$. Therefore by \cite[Satz 3]{Len},   
$\RamiK{\Hom_G}(\Psi_G(M,\rho),V_{(M,\rho)})=\cF_V((M,\rho))$ it is finitely presented over $\End(\Psi_G(M,\rho))$ and thus over $\fX(M)$.
\end{proof}
}
%
%
%
%
%
%
\section{cuspidal case}\label{sec:cusp}
The following theorem \RamiD{g}eneralizes theorem \ref{thm:intro.mutl_via_fiber}\eqref{thm:intro.mutl_via_fiber:2}\RamiD{.}
\begin{theorem}\label{thm: StrCuspShf}
Let $\rho$ be \RamiG{an irreducible cuspidal} representation of $G$. Let $\bfH<\bfG$ be a Zariski closed subgroup and $H=\bfH(F)$. Let  $X=G/H$  and let $\cL$ be a $G$-equivariant line bundle over $X$. Assume that $X$  is of finite type.

 Then, {t}he sheaf
$\cF_{X,\cL}(G,\rho)$ is a direct image of a locally free
sheaf on a smooth subvariety of $\RamiD{\CH_G}$.
\end{theorem}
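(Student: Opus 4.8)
The goal is to show that $\cF_{X,\cL}(G,\rho) = \Hom_G(\Psi_G(G,\rho), \Sc(X,\cL))$, viewed as a sheaf on $\fX_G$, is $i_*$ of a locally free sheaf on a smooth subvariety $\fX' \subset \fX_G$. By Theorem~\ref{thm:intro.mutl_via_fiber}\eqref{thm:intro.mutl_via_fiber:1} (i.e. Theorem~\ref{lem: mutl_via_fiber}), the fibers of this sheaf compute $\Ext^*_G(\Sc(X,\cL), \chi\rho)$, and since $X$ is of finite type the sheaf is coherent (the proposition just before this section, applied with $V = \Sc(X,\cL)$, which is locally finitely generated by Theorem~\ref{thm.f.gen}). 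So we have a coherent sheaf on the smooth affine variety $\fX_G$, and we want to show it has the very rigid structure $i_*(\text{locally free})$. The mechanism must be some form of multiplicity-one / self-duality phenomenon forcing the $\Ext$-groups in degree $0$ to have locally constant rank on their support and all higher $\Ext$ to vanish there.

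First I would reduce, via Proposition~\ref{prop:et} and Lemma~\ref{lem:Azum}, from the (possibly noncommutative, Azumaya-type) algebra $\cR_{(G,\rho)}$ to the commutative situation: after an \'etale base change $\spec C \to \spec Z(\cR_{(G,\rho)})$, the module $\cF := \cF_{X,\cL}(G,\rho)$ becomes $T^{\oplus n}$ for a $C = O(\fX_G)$-module $T$, and the whole question is equivariant under the finite group $\fI_\rho$ acting on $\fX_G$, with $\fX_G/\fI_\rho = \spec Z(\cR_{(G,\rho)})$. Since $i_*$ of locally free and "locally constant generic rank with vanishing higher Ext on support" are both preserved and reflected by faithfully flat \'etale descent and by the splitting $T \mapsto T^{\oplus n}$, it suffices to prove the analogous statement for the coherent $O(\fX_G)$-module $T$. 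Concretely: I must show that $\supp T$ is a smooth closed subvariety $\fX'$ and that $T$ is, Zariski-locally on $\fX'$, a free $O_{\fX'}$-module. Equivalently, $\Tor$-dimension of $T$ over $O_{\fX_G}$ matches $\operatorname{codim}\fX'$ and $T$ restricted to $\fX'$ is locally free.

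The key input — and the heart of the argument — is a duality/symmetry for $\Sc(X,\cL)$ coming from finite type. The natural tool is that for a spherical space of finite type one expects a perfect pairing relating $\Ext^i_G(\Sc(X,\cL),\chi\rho)$ with $\Ext^{d-i}_G(\Sc(X,\cL^\vee), \chi^{-1}\tilde\rho)$ or similar (a "Poincar\'e duality" for the homological multiplicities, using that $\Sc(X,\cL)$ has a finite resolution by projectives whose ranks are controlled, and that $\cM(G)$ has finite cohomological dimension). Translated through Theorem~\ref{lem: mutl_via_fiber}, this says the complex $R\Hom_{O_{\fX_G}}(\cF, O_{\fX_G})$ is, up to shift and twist, again of the form $\cF'$ for the dual line bundle. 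Combined with coherence this is exactly a "maximal Cohen–Macaulay up to the expected codimension" statement, i.e. $\cF$ is a perfect complex concentrated in one degree whose dual is concentrated in one degree; by the Auslander–Buchsbaum formula and smoothness of $\fX_G$ this forces $\cF = i_*(\text{MCM sheaf on }\fX')$ with $\fX'$ smooth, and an MCM module over a regular ring is free. This is where the "Cohen–Macaulay property of spherical varieties" of the title enters.

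The main obstacle, which I would flag explicitly, is establishing this duality in the required generality — for an arbitrary Zariski-closed $\bfH$ with $G/H$ merely of finite type (not necessarily symmetric, not necessarily with a nice involution), and with a twisting line bundle $\cL$. One cannot simply invoke second adjointness; one needs that the Bernstein component of $\Sc(X,\cL)$ is self-dual up to the expected twist and shift, which likely goes through the structure of $\cF$ as a module: show first that $\cF$ is torsion-free on its support, then that its support is smooth (perhaps by exhibiting it as a union of translates of a subtorus, using the $\fX_G$-equivariance and the fact that the cuspidal support is a single orbit), and finally upgrade torsion-free-on-a-smooth-subvariety to locally free by checking the higher $\Ext$ vanish — the last step again using the finite-homological-dimension estimate together with the duality to pin all cohomology into degree $0$. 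I would carry out: (1) coherence and $\fX_G$-equivariance; (2) \'etale reduction to $T$ over $O(\fX_G)$; (3) identification of $\fX' = \supp T$ as a smooth subvariety; (4) the duality statement; (5) deduction that $T$ is MCM over $O_{\fX'}$, hence locally free, hence $\cF = i_*(\cL')$ with $\cL'$ locally free — with step (4) being the crux.
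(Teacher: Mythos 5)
Your proposal stalls exactly where you flag it: step (4), the duality. No Poincar\'e-type duality for $\Ext^*_G(\Sc(X,\cL),\chi\rho)$ is proved in the paper or available at this level of generality (arbitrary Zariski-closed $\bfH$ with $G/H$ merely of finite type, twisted by $\cL$), and none of the tools you invoke (Theorem \ref{lem: mutl_via_fiber}, Proposition \ref{prop:et}, Lemma \ref{lem:Azum}) produces it; so the crux of your argument is an unproved conjecture. Worse, even granting it, the deduction in steps (3) and (5) does not follow: a coherent sheaf on a smooth variety whose derived dual is concentrated in a single degree is a Cohen--Macaulay module, but its support need not be smooth and it need not be locally free over its support (think of $i_*\mathcal{O}_Y$ for a singular Cohen--Macaulay subvariety $Y$, or a non-free maximal Cohen--Macaulay module over the local ring of the support). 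Smoothness of the support and local freeness on it are precisely the content of the theorem, and your outline leaves them to a ``perhaps'' equivariance remark.

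The paper's actual proof uses no duality and does not pass through Theorem \ref{lem: mutl_via_fiber}; it is an explicit computation exploiting cuspidality of $\rho$. Since $\Psi_G(G,\rho)=ind_{G^0}^G(\rho|_{G^0})$, one has $\cF_{X,\cL}(G,\rho)=\Hom_{G^0}(\rho,\Sc(X,\cL))$. One first reduces, via a finite \'etale map of tori and the descent Lemma \ref{lem:FinEt}, to the case $G=G^0\cZ(G)$; there one decomposes $G\cong G^0\times\Lambda_\cZ$ and, using the finite-index Lemma \ref{lem:FinInd}, splits $\Lambda_\cZ=\Lambda^2_\cZ\oplus\Lambda^3_\cZ$ so that $X\cong X^0\times\Lambda^3_\cZ$. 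This gives
$$\cF_{X,\cL}(G,\rho)\cong\Hom_{G^0}\bigl(\rho,\Sc(X^0,\cL|_{X^0})\bigr)\otimes\C[\Lambda^3_\cZ]\otimes\chi_2,$$
which is visibly the direct image of a free sheaf of finite rank (finite type of $X$ supplies the finite-dimensionality of the $G^0$-multiplicity space) on a character-translate of the subtorus $\spec\C[\Lambda^3_\cZ]$ of $\CH_G$; the smooth subvariety thus appears explicitly rather than from homological algebra. To salvage your route you would have to both prove the duality and supply an independent argument for smoothness of the support and local freeness over it, at which point the explicit product decomposition is the simpler path.
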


For the proof we will need the following lemma{s}
\begin{lemma}\label{lem:FinEt}
Let $\phi:S \to S'$ be finite \et\ map of algebraic varieties.
Let $\cF$ be a coherent sheaf over $S$. Suppose that $\phi_*(\cF)$
is  a direct image of a locally free sheaf on a smooth subvariety
of $S'$. \RamiK{Then,} $\cF$ is  a direct image of a locally free sheaf
on a smooth subvariety of $S$.
\end{lemma}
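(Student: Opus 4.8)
The plan is to prove Lemma~\ref{lem:FinEt} by a descent argument, transferring the structural statement about $\phi_*(\cF)$ across the finite \'etale map $\phi$. First I would analyze what ``$\phi_*(\cF)$ is a direct image of a locally free sheaf on a smooth subvariety of $S'$'' actually gives us: there is a smooth closed subvariety $T' \subset S'$, an embedding $j' : T' \to S'$, and a locally free sheaf $\cE'$ on $T'$ with $\phi_*(\cF) \cong j'_*(\cE')$. In particular the scheme-theoretic support of $\phi_*(\cF)$ is $T'$, and $\phi_*(\cF)$, viewed as a module over $\cO_{T'}$, is locally free. I would set $T := \phi^{-1}(T')$ with its reduced-or-scheme-theoretic structure coming from the support of $\cF$ — more precisely, I would argue that the support of $\cF$ is exactly $\phi^{-1}(T')$ because formation of $\phi_*$ commutes with the relevant localizations (a finite morphism is affine, so pushforward is exact and support-preserving in the sense that $\operatorname{Supp}(\phi_*\cF) = \phi(\operatorname{Supp}\cF)$, and since $\phi$ is also finite \'etale hence open, one controls $\operatorname{Supp}\cF$ over $\operatorname{Supp}\phi_*\cF$).

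The key step is then: $\phi$ restricts to a finite \'etale map $\psi : T \to T'$. Since $T'$ is smooth and $\psi$ is \'etale, $T$ is smooth. So it remains to show $\cF|_T$ (i.e. $\cF$ as an $\cO_T$-module) is locally free over $\cO_T$. Here I would use that $\cF = \psi^* (\text{something})$? — no, that is not given. Instead the cleanest route is flatness plus finite presentation: $\cF$ is coherent on $S$, supported on $T$; its pushforward $\psi_*(\cF|_T) = (\phi_*\cF)|_{T'} = \cE'$ is locally free over $\cO_{T'}$; and $\psi$ is faithfully flat (finite \'etale, surjective onto $T'$ after replacing $T'$ by a component if necessary — or one works componentwise). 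Faithfully flat descent of the module property: a coherent $\cO_T$-module $\cG$ with $\psi_*\cG$ locally free over $\cO_{T'}$ is locally free over $\cO_T$, because locally $\cO_T$ is a finite \'etale (hence free, Zariski-locally) $\cO_{T'}$-algebra, and a module over a finite free algebra extension whose restriction of scalars is free is itself projective, hence (being finitely generated over a Noetherian ring) locally free. Concretely: for $x \in T$ with $x' = \psi(x)$, $\cO_{T,x}^{\wedge}$ is a finite product of copies of $\cO_{T',x'}^{\wedge}$ by \'etaleness, and $\cF_x^{\wedge}$ as a module over this product decomposes into summands each of which is $(\psi_*\cF)_{x'}^{\wedge}$-like and hence free. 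Then $\cF|_T$ being finitely presented with free completions at all points is locally free.

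So the structure of the write-up is: (i) extract $T'$, $\cE'$, $j'$ from the hypothesis; (ii) define $T = \phi^{-1}(T')$, show it equals $\operatorname{Supp}\cF$ and that $\psi := \phi|_T : T \to T'$ is finite \'etale; (iii) conclude $T$ is smooth; (iv) show $\psi_*(\cF|_T)$ is locally free over $\cO_{T'}$ (it is $(\phi_*\cF)|_{T'} = \cE'$); (v) apply the local computation / flat descent above to conclude $\cF|_T$ is locally free over $\cO_T$; (vi) then $\cF = i_*(\cF|_T)$ with $i : T \to S$ the closed embedding, and $\cF|_T$ locally free on the smooth $T$, which is the desired conclusion. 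The main obstacle I anticipate is step (ii): being careful that $T'$ and $T$ are given the right (possibly non-reduced) scheme structures so that ``direct image of a locally free sheaf on a smooth subvariety'' holds on the nose — in particular checking that $\operatorname{Supp}\cF = \phi^{-1}(T')$ scheme-theoretically and that \'etaleness is genuinely inherited by $\psi$ — and step (v), where one must phrase the descent of local freeness across a finite \'etale morphism cleanly (e.g. via completion or via the fact that finite \'etale algebras are Zariski-locally finite products, reducing to the split case where the claim is immediate).
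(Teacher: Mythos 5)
Your proposal is correct in its essentials and follows the same strategy as the paper: replace $S'$ by the smooth subvariety $T'$ and $S$ by $T=\phi^{-1}(T')$, note that $T$ is smooth because it is \'etale over $T'$, and then descend local freeness of $\cF$ along the finite \'etale map $\psi$ by splitting the cover locally and using that a direct summand of a free module is free. The only real difference is where the splitting happens: the paper invokes the fact that local freeness can be checked in the \'etale topology and trivializes the cover \'etale-locally, whereas you trivialize it on completed local rings and descend along the faithfully flat completion; both work. Two inaccuracies should be repaired in a write-up. First, the support of $\cF$ need not equal $\phi^{-1}(T')$ (it can be a proper union of connected components, e.g.\ $\cF$ living on one sheet of a split cover); what you actually need, and what is true, is that $\cF$ is scheme-theoretically an $\cO_T$-module, because the ideal of $T'$ times $\phi_*\cO_S$ kills $\phi_*\cF$, and your completion argument then shows $\cF$ is locally free over $\cO_T$ of locally constant (possibly zero) rank, which suffices for the conclusion. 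Second, your alternative route via ``finite \'etale algebras are Zariski-locally finite products'' is false -- this is precisely why the \'etale topology (or completion) is needed -- so only the completion argument (or the paper's \'etale-local one) is valid; and, strictly speaking, it is $(\psi_*\cO_T)^{\wedge}_{x'}$, not the local ring $\cO_{T,x}^{\wedge}$ itself, that is a finite product of copies of $\cO_{T',x'}^{\wedge}$, each factor being isomorphic to $\cO_{T',x'}^{\wedge}$ because the residue fields are $\C$; with that phrasing your step (v) is exactly the ``summand of free is free'' argument of the paper, carried out after completion.
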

\begin{proof}
Without loss of generality we can assume that $\phi_*(\cF)$ is a
locally free sheaf on $S'$. Now recall that a sheaf is locally
free if and only if it is locally free in the \et\ topology.
So we can assume that the map $\phi$ is a projection from a
product of $S'$ by a {reduced zero dimensional} scheme. In this case the assertion
follows from the fact that a direct summand of a locally free
sheaf is locally free.
\end{proof}

{
\begin{lemma}\label{lem:FinInd}
Let $\Lambda_\cZ=\cX_{*}(\cZ(\bfG))$ be the lattice of co-characters of the center of $G$.
{C}onsider the map $\Lambda_{\cZ} \to G$ given by evaluation at  the uniformizer $\RamiG{\varpi},$ and consider   $\Lambda_{\cZ}$ as a subset of $G$. Let $X,H$ be as in Theorem \ref{thm: StrCuspShf}.
Then $\Lambda_{\cZ} \cap H$ has finite index in $\Lambda_{\cZ} \cap (H \cdot {G^0}),$ where ${G^0}$ is the subgroup generated by compact subgroups of $G$.
\end{lemma}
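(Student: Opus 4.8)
The plan is to reduce the statement to a claim about lattices and use the structure theory of reductive $p$-adic groups, specifically the Cartan/Iwasawa-type decomposition relating $G$, $G^0$, and the central cocharacter lattice. Recall that $G^0$, the subgroup generated by all compact subgroups of $G$, is a normal open subgroup and $G/G^0$ is a finitely generated free abelian group (this is the image of $G$ under the sum of unramified characters / the ``Kottwitz homomorphism''). The map $\Lambda_\cZ = \cX_*(\cZ(\bfG)) \to G$ sending a cocharacter to its value at the uniformizer $\varpi$ lands in $G$, and composed with $G \to G/G^0$ it has finite cokernel — this is the basic fact that the center already accounts for the ``non-compact part'' of $G$ up to finite index. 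So the quotient $(\Lambda_\cZ \cdot G^0)/G^0$ has finite index in $G/G^0$.

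**First I would** set up notation: write $L := \Lambda_\cZ \cap (H \cdot G^0)$ and $L_0 := \Lambda_\cZ \cap H$, so we want $[L : L_0] < \infty$. The key input from sphericity is finiteness of the multiplicity / finite type hypothesis, which via Theorem~\ref{thm.f.mul} and the attached references (\cite{AAG}) tells us $\mathrm{ind}_H^G(\eta)^K$ is finitely generated over $\cH(G,K)$, and more relevantly that there are finitely many $P_0$-orbits on $G/H$. The cleanest route, though, is the geometric one: an $F$-spherical space $G/H$ has the property that $H$ acting on $G$ has an open orbit under a Borel, which forces $Z(\bfG)^0/(Z(\bfG)\cap \bfH)$ to be ``not too large'' — concretely, the relevant statement is that $H \cdot G^0$ together with $\Lambda_\cZ$ already generate a finite-index-controlled piece. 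Actually the simplest argument: $\Lambda_\cZ / L_0$ embeds into $G/H$, and $\Lambda_\cZ \cap (H\cdot G^0)$ modulo $L_0$ embeds into $(H \cdot G^0)/H \cong G^0/(G^0 \cap H)$; the latter is a quotient of the compact-generated group $G^0$ and so... this isn't quite finite by itself, so I need the finite-type hypothesis to control it.

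**The real argument** I would run: consider the torus $\cZ(\bfG)^0$ and its $F$-points; the image $\Lambda_\cZ$ in $G$ generates, together with the maximal compact subgroup of $\cZ(\bfG)(F)$, the whole of $\cZ(\bfG)(F)$ up to finite index. Now $\Lambda_\cZ \cap (H \cdot G^0)$ maps, modulo $\Lambda_\cZ \cap H$, injectively into the discrete group $(H \cdot G^0 \cap \cZ(\bfG)(F))/(H \cap \cZ(\bfG)(F))$ up to finite index, and this is a subquotient of $\cZ(\bfG)(F)/(\cZ(\bfG)(F) \cap H)$ which is the group of $F$-points of the torus $\cZ(\bfG)/(\cZ(\bfG)\cap \overline{\bfH})$ (up to finite stuff). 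For $G/H$ to be $F$-spherical (in particular to have finite $G$-multiplicities in the sense assumed), this quotient torus must be anisotropic — otherwise one produces a whole $\mathbb{G}_m$ worth of distinct unramified characters by which one could twist, contradicting finiteness; hence its $F$-points are compact, and a compact group meeting the finitely-generated discrete group $(\Lambda_\cZ \cdot G^0)/G^0$ is finite. **The main obstacle** will be making precise the implication ``finite type $\Rightarrow$ the central quotient torus is anisotropic'': one must argue that a split central torus in $\bfG/\bfH$ would give an unbounded family of twists $\chi$ with $m_{(H)}^0(\pi\otimes\chi) \neq 0$ simultaneously, or directly that $\mathrm{ind}_H^G(\mathbf{1})$ would fail the finite-generation of Theorem~\ref{thm.f.gen}; alternatively one can cite that an $F$-spherical $G/H$ has $H$ containing (a finite-index subgroup of) the split part of the center, which is essentially built into standard definitions of sphericity but needs a careful reference or a short orbit-counting argument via the finitely-many $P_0$-orbits condition.
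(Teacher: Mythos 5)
The decisive step of your argument --- that the finite-type hypothesis forces the quotient torus $\cZ(\bfG)/(\cZ(\bfG)\cap\bfH)$ to be anisotropic (equivalently, to have compact $F$-points), together with your fallback versions that $H$ contains a finite-index subgroup of the split part of the center, or that otherwise $ind_{H}^{G}(\eta)^K$ would fail the finite generation of Theorem \ref{thm.f.gen} --- is false, and the rest of the proposal stands or falls with it. Take $\bfG=\bfG'\times\bfG'$ and $\bfH=\Delta\bfG'$ with $\bfG'=GL_n$, so that $X=GL_n(F)$ as a $GL_n(F)\times GL_n(F)$-space (or even $\bfG=GL_1$ with $\bfH$ trivial). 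These spaces are $F$-spherical of finite type, $ind_{H}^{G}(\eta)^K$ is finitely generated (they satisfy the hypotheses, hence the conclusion, of Theorem \ref{thm.f.gen}), and yet $\cZ(\bfG)/(\cZ(\bfG)\cap\bfH)$ is a one-dimensional split torus with noncompact $F$-points, and $H$ contains no finite-index subgroup of the split center. The heuristic you offer is a non sequitur: finiteness of multiplicities is imposed for each irreducible $\pi$ separately, and an unramified twist replaces $\pi$ by a (generically non-isomorphic) representation $\chi\pi$ with its own finite multiplicity; nothing accumulates in any single $\Hom$-space. A positive-dimensional family of twists all occurring with nonzero multiplicity is in fact the expected picture in this paper: it is exactly what Theorem \ref{thm: StrCuspShf} describes when the support $\fX'$ of the multiplicity sheaf has positive dimension, as happens in the group case.

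There is a second, related loss of information: the lemma only asks you to control $(\Lambda_\cZ\cap(H\cdot G^0))/(\Lambda_\cZ\cap H)$, and by embedding this into $\cZ(\bfG)(F)/(\cZ(\bfG)(F)\cap H)$ you discard the constraint of lying in $H\cdot G^0$, which is precisely what keeps the relevant group small: in the group-case example $\cZ(\bfG)(F)/(\cZ(\bfG)(F)\cap H)\cong F^\times$ is noncompact, while $(\cZ(\bfG)(F)\cap H\cdot G^0)/(\cZ(\bfG)(F)\cap H)$ is compact. The paper's proof uses no multiplicity input at this point at all; it is purely structural. It maps everything to the lattice $G/G^0$ (the restriction of $G\to G/G^0$ to $\Lambda_\cZ$ is injective because $\cZ(G)\cap G^0$ is compact), notes that the image of $\Lambda_\cZ\cap(H\cdot G^0)$ lies in the image of $H/H^0$, identifies the image of $\Lambda_\cZ\cap H$ with the part coming from $H^1=H^0\cZ(H)$ (using that cocharacters of $\cZ(\bfH)$ evaluated at $\varpi$ cover $H^1/H^0$), and concludes from the finite index of the image of $H^1/H^0$ in $H/H^0$. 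In other words, the finiteness is extracted from the structure of $H$ itself --- its own center already accounts for the noncompact directions of $H$ up to finite index --- and not from any anisotropy of a central quotient of $\bfG$. To salvage your route you would need to prove compactness of $(\cZ(\bfG)(F)\cap H\cdot G^0)/(\cZ(\bfG)(F)\cap H)$ directly, and any such argument essentially reproduces the paper's computation in $G/G^0$.
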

\begin{proof}
Let  ${H^0}$ \RamiE{be the subgroup generated by compact subgroups of $H$. We also denote  $H^1:=H^0\cZ(H)$ and $G^1=G^0\cZ(G)$.} Let $\Lambda_{\cZ(H)}=\cX_{*}(\cZ(\bf H)).$ We have the following commutative diagram:
$$\xymatrix{
\Lambda_{\cZ} \cap (H\cdot \Rami{G^0})\ar[r]^{ \quad\ \ \ } &\Lambda_{\cZ}\ar[r]^{i_G} & G \ar[r]^{p_G} \ar@{<-}[d]^{i} & G/\Rami{G^0} \ar@{<-}[d]^{i_0}\\
&& H \ar@{<-}[d]^{j} \ar[r]^{p_H} & H/\Rami{H^0} \ar@{<-}[d]^{j_{0}}\\
\Lambda_{\cZ} \cap H\ar[r]^{} \ar[uu]^{}&\Lambda_{\cZ(H)}\ar[r]^{i_H} & H^{1} \ar[r]^{p_{H^{1}}} & H^{1}/H^{0}
} $$
We have $\Lambda_{\cZ} \cap (H\cdot \Rami{G^0})=(p_G\circ i_G)^{-1}(Im (i_0))$. \RamiH{Note that $\cZ(G) \cap {G^0}$ is compact. 
Hence $\Lambda_\cZ \cap \Rami{G^0}$ is trivial. Thus}
we have $\Lambda_{\cZ} \cap H=(p_G\circ i_G)^{-1}(Im (i_0 \circ j_0\circ p_{H^{1}}\circ i_H )).$ {Since $ p_{H^{1}}\circ i_H $ is onto, we have $\Lambda_{\cZ} \cap H=(p_G\circ i_G)^{-1}(Im (i_0 \circ j_0 )).$} The assertion follows now from the fact that $Im(j_0)$ has finite index in $H/\Rami{H^0}.$
\end{proof}
}

\begin{proof}[Proof of Theorem \ref{thm: StrCuspShf}]
$ $
\begin{enumerate}[{Step }1.]
\item Proof for the case when $G=G^1:=G^0\cZ(G)$
\\
{Let \RamiD{$\Lambda_\cZ$} be} as in the above lemma (Lemma \ref{lem:FinInd}).
{D}ecompose $G=G^{0} \times \Lambda_{\cZ}.$
{ Let $\Lambda^{0}_{\cZ}:=\Lambda_{\cZ} \cap H$ and $\Lambda^{1}_{\cZ}:=\Lambda_{\cZ} \cap (H \cdot \Rami{G^0})$. We can \RamiE{find a decomposition} $\Lambda_{\cZ}:=\Lambda^{2}_{\cZ} \oplus \Lambda^{3}_{\cZ},$ \RamiG{such that}  $\Lambda^{1}_{\cZ}$ is a subgroup of finite index in  $\Lambda^{2}_{\cZ}$. We define $X^0:=\Lambda_{\cZ}^{2} \cdot G^{0} \cdot [e],$ where $[e]\in X$ is the class of the unit element in $G$.}

{Using the fact that $G^0$ is normal in $G$, we get}
  $$X \cong X^0 \times \Rami{\Lambda^{3}_{\cZ}}$$
  as $\Rami{G^{0} \cdot \Lambda^{3}_{\cZ} \cdot \Lambda^{0}_{\cZ} \RamiE{\cong} G^{0} \times \Lambda^{3}_{\cZ} \times\Lambda^{0}_{\cZ}}$-spaces.
{Here the action of $\Lambda^{0}_{\cZ}$ on $X^0 \times \Rami{\Lambda^{3}_{\cZ}}$ is trivial, the action of  $\Lambda^{3}_{\cZ}$ is on the second component and the action of  $G^{0}$ is on the first.

Now {consider} the fiber $\cL|_{\RamiD{[e]}}$ as a character of $G^{0} _{\RamiD{[e]}}\times \Lambda^{0}_{\cZ}$ and decompose it into a product  $\chi_1 \otimes  \chi_2$.}


Thus {{w}e have isomorphisms of $G^{0} \times \Lambda^{3}_{\cZ} \times\Lambda^{0}_{\cZ}$ representations: $$\RamiD{\cF_{X,\cL}((G,\rho))}= \Hom_{G^{0}}(\rho,\Sc(X\Rami{,\cL}))\cong \Hom_{G^{0}}(\rho,\Sc(X^0,\cL|_{X^0}))
 \otimes {\C}[\Lambda^{3}_{\cZ}]
\otimes \chi_2. $$

Let $L:=\spec({\C}[\Lambda^{3}_{\cZ} \times\Lambda^{0}_{\cZ} ])$. By Lemma \ref{lem:FinInd} the map  $$\pi:\RamiD{\CH_G}=\spec({\C}[G/G^0])\to L$$ is a finite \et\ morphism.  {W}e see that $\pi_*(\RamiD{\cF_{X,\cL}((G,\rho))})$ is  a direct image of a  free sheaf on $\spec( {\C}[\Lambda^{3}_{\cZ}]) \RamiD{\times \{\chi_2\}}$ which is
  a smooth subvariety
of $L$. Thus Lemma \ref{lem:FinEt} implies the assertion}{.}



\item Proof for the general case.\\
As in the previous step consider $\Lambda_\cZ \subset G/G^0.$
This gives a finite {\et\ map} 
 $$\CH(G)=\spec({\C}[G/G^0])
\overset{p}\to \spec({\C}[\Lambda_\cZ]) \Rami{\cong {\C}[{G^1}/G^0]}.$$ Similarly to the previous step, the sheaf $p_*(\RamiD{\cF_{X,\cL}((G,\rho))})$ is a direct
image of a \RamiD{locally} free sheaf on a smooth subvariety of
$\spec({\C}[\Lambda_\cZ ])$.
{Again Lemma  \ref{lem:FinEt} implies the assertion}{.}
\end{enumerate}
\end{proof}
\RamiD{
Theorem \ref{thm: StrCuspShf} together with Theorem \ref{lem: mutl_via_fiber},  \RamiD{imply} Theorem \ref{thm:intro.cusp} using the following standard fact:
\begin{lemma}
Let $X$ be a smooth variety and $Y$ be its closed subvariety. \RamiG{C}onsider $O_Y$ as a coherent sheaf over $X$. Then for any $y\in Y$ we have
$$\dim \RamiK{\Ext^i_{O_X}}(O_Y,\delta_y)=\begin{pmatrix}\dim X-\dim Y \ \\
i \\
\end{pmatrix}.
$$

\end{lemma}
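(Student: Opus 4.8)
The plan is to reduce to a local computation via a Koszul resolution. Since $X$ is smooth and $Y$ is a closed subvariety, the question is local around a point $y \in Y$, so I would pass to the local ring $R = O_{X,y}$, a regular local ring, with maximal ideal $\mathfrak{m}$ and residue field $\C = R/\mathfrak{m}$. The skyscraper sheaf $\delta_y$ corresponds to the $R$-module $\C$. Write $d = \dim X$ and $e = \dim Y$; after possibly shrinking $X$ and using that $Y$ is smooth at a generic point — or, more robustly, noting that the formula only needs to hold set-theoretically and $\Ext$ against $\delta_y$ only sees the completion — I would arrange that the ideal $I$ of $Y$ is generated by a regular sequence $f_1, \dots, f_{d-e}$ that extends to a regular system of parameters $f_1, \dots, f_{d-e}, g_1, \dots, g_e$ of $R$.

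The key step is then: the Koszul complex $K_\bullet(f_1,\dots,f_{d-e};R)$ is a free resolution of $R/I = O_{Y,y}$ because $(f_1,\dots,f_{d-e})$ is a regular sequence in $R$ (here regularity of $R$ and smoothness of $Y$ ensure this). Dualizing and tensoring with $\C$: $\Ext^i_R(R/I,\C) \cong H^i\big(\Hom_R(K_\bullet,R)\otimes_R \C\big)$. Since the Koszul complex on $f_1,\dots,f_{d-e}$ becomes, after tensoring with $\C$, the Koszul complex on the images of the $f_j$ in $\mathfrak{m}/\mathfrak{m}^2$ — and these images are part of a basis of $\mathfrak{m}/\mathfrak{m}^2$, hence are "zero as scalars" but the differentials vanish mod $\mathfrak{m}$ — all differentials in $\Hom_R(K_\bullet,R)\otimes\C$ are zero. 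Therefore $\Ext^i_R(R/I,\C) \cong \Lambda^i(\C^{d-e})$, which has dimension $\binom{d-e}{i}$.

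The main obstacle is the reduction ensuring that $I$ is generated by a regular sequence near $y$ even when $Y$ is merely closed and not assumed smooth. If $Y$ is not smooth at $y$ this fails and the formula as literally stated would be false; so I would either (a) add the hypothesis that $Y$ is smooth — which is how the lemma is used, since in the application $Y = \fX'$ is a smooth subvariety — or (b) interpret the statement as holding on the smooth locus. Granting smoothness of $Y$, the regular sequence exists by the standard fact that a smooth subvariety of a smooth variety is locally a complete intersection cut out by part of a coordinate system, and the rest is the Koszul computation above. Everything else is routine: the passage from sheaves to the local ring, and the identification of $\Ext$ with Koszul cohomology, are standard.
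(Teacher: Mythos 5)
Your argument is correct, and in fact the paper offers no proof of this lemma at all (it is invoked as a ``standard fact''), so the only comparison to make is with the intended standard argument, which is precisely the one you give: localize at $y$ (legitimate because $\delta_y$ is a skyscraper and the base $\fX_M$ is affine, so sheaf $\Ext$ is module $\Ext$ over $R=O_{X,y}$), resolve $O_{Y,y}=R/I$ by the Koszul complex of a regular sequence $f_1,\dots,f_{d-e}$ generating $I$, and note that the differentials of $\Hom_R(K_\bullet,R)\otimes_R\C$ vanish, giving $\dim\Ext^i_R(R/I,\C)=\dim\Lambda^i(\C^{d-e})=\binom{d-e}{i}$. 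One phrase is worth tightening: what matters is not the ``Koszul complex on the images of the $f_j$ in $\mathfrak m/\mathfrak m^2$'' but simply that every matrix entry of the Koszul differentials lies in $I\subset\mathfrak m$, hence dies after $\otimes_R\C$; equivalently, $\dim\Ext^i_R(R/I,\C)$ is the $i$-th Betti number of $R/I$ over $R$, and for a regular sequence the Koszul complex is the minimal free resolution.

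Your caveat about smoothness of $Y$ is a correct and genuinely useful observation rather than a flaw in your proof: for an arbitrary closed subvariety the formula fails at points where $I_y$ is not generated by a regular sequence of length $\dim X-\dim Y$. For instance, if $Y\subset\mathbb{A}^3$ is the union of the three coordinate axes and $y$ is the origin, then $\dim\Ext^1_{O_X}(O_Y,\delta_y)=3\neq 2=\binom{2}{1}$, since that dimension is the minimal number of generators of $I_y$. So the lemma should be read with $Y$ smooth (or at least a local complete intersection of the expected codimension at $y$), which is exactly how it is used in the paper: there $Y=\fX'$ is the smooth subvariety produced by Theorem \ref{thm: StrCuspShf}, so your added hypothesis is harmless and your option (a) is the right reading.
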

}

\appendix

\section{Tensor Multipicities}\label{sec:ten.mult}
Let $V,W \in \cM(G)$ be smooth representations of $G$. Consider $V,W$ as left $\cH(G)$-modules. \EitanG{Since} $\cH(G)$ is equipped with an \EitanG{anti-involution} induced by $g \mapsto g^{-1}$, we can consider $V$ as a right $\cH(G)$-module. Thus we can define $V\otimes_{\cH(G)}W$.
\begin{lemma}
Let $V,W \in \cM(G)$ be smooth representations of $G$. Then we have:
$$(V\otimes_{\cH(G)}W)^*\cong \RamiK{\Hom}_G(V,\tilde W).$$
\end{lemma}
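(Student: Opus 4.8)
The statement to prove is the duality
$$(V\otimes_{\cH(G)}W)^*\cong \Hom_G(V,\tilde W)$$
for smooth $G$-representations $V,W$, where $V$ is viewed as a right $\cH(G)$-module via the anti-involution $g\mapsto g^{-1}$ and $\tilde W$ denotes the smooth (contragredient) dual. The plan is to unwind both sides into spaces of bilinear (or sesqui-linear) maps and match them. First I would recall the universal property of the tensor product of modules over the non-unital but idempotented algebra $\cH=\cH(G)$: a linear functional on $V\otimes_{\cH}W$ is the same as a $\C$-bilinear map $B:V\times W\to\C$ that is ``$\cH$-balanced,'' i.e. $B(v\cdot h,w)=B(v,h\cdot w)$ for all $h\in\cH$. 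Since $V$ is a right module via $v\cdot h := h^{\vee}\cdot v$ where $h^\vee(g)=h(g^{-1})$, this balancing condition reads $B(h^\vee v,w)=B(v,hw)$; replacing $h$ by $h^\vee$ it becomes $B(hv,w)=B(v,h^\vee w)$ for all $h\in\cH$, $v\in V$, $w\in W$.

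Next I would translate the right-hand side. Giving a map $\phi\in\Hom_G(V,\tilde W)$ is, by definition of the smooth dual and the fact that $\tilde W\subset W^*$, the same as giving a bilinear pairing $B_\phi:V\times W\to\C$, $B_\phi(v,w):=\langle\phi(v),w\rangle$, which is $G$-invariant in the sense $B_\phi(gv,gw)=B_\phi(v,w)$ — plus the constraint that for each fixed $v$ the functional $w\mapsto B_\phi(v,w)$ is smooth, which is automatic because it lies in $\tilde W$, and conversely any $G$-invariant pairing $B$ with $B(v,\cdot)\in\tilde W$ for all $v$ defines such a $\phi$. The key step is then the elementary observation that, for smooth representations, a bilinear pairing $B$ satisfies the $\cH$-condition $B(hv,w)=B(v,h^\vee w)$ for all $h\in\cH$ \emph{if and only if} it is $G$-invariant: taking $h=\mathbf{1}_{gK}$ (a suitable multiple of the characteristic function of a coset $gK$ with $K$ small enough to fix the relevant vectors) recovers $B(gv,w)=B(v,g^{-1}w)$, and conversely $G$-invariance integrates up to the $\cH$-identity since every $h\in\cH$ is a finite linear combination of such indicators. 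One also has to check that the smoothness of $B(v,\cdot)$ in the $w$-variable is automatic on the tensor-product side: if $w\in W^K$ then $B(v,w)=B(v,e_K w)=B(e_K^\vee v,w)$ where $e_K$ is the idempotent attached to $K$, so $B(v,\cdot)$ factors through $W_K$-coinvariants appropriately — more cleanly, $w\mapsto B(v,w)$ is smooth because $V$ smooth forces $v\in V^{K'}$ for some $K'$ and then $B(v,w)=B(v,e_{K'}w)$, so the functional is $K'$-invariant, hence smooth, hence lies in $\tilde W$. Assembling these identifications gives mutually inverse linear maps between $(V\otimes_{\cH}W)^*$ and $\Hom_G(V,\tilde W)$.

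Concretely I would organize the proof as: (i) state the universal property $(V\otimes_{\cH}W)^*\cong\{\text{$\cH$-balanced bilinear }B:V\times W\to\C\}$; (ii) rewrite ``$\cH$-balanced'' using the definition of the right action and the anti-involution, arriving at $B(hv,w)=B(v,h^\vee w)$; (iii) show this is equivalent to $G$-invariance $B(gv,gw)=B(v,w)$, using that $\cH$ is spanned by coset indicators and that all vectors are smooth; (iv) observe a $G$-invariant bilinear pairing is the same datum as a $G$-map $V\to W^*$ landing in $\tilde W$ (the landing-in-$\tilde W$ part being forced by smoothness of $V$), i.e. an element of $\Hom_G(V,\tilde W)$; (v) check the bijection is linear and the two constructions are mutually inverse. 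The main obstacle — really the only subtlety — is step (iii) together with the automatic-smoothness point in (iv): one must be careful that the anti-involution is exactly what converts the ``balanced over $\cH$ acting on opposite sides'' condition into honest $G$-invariance rather than into a twisted version, and that no modulus character sneaks in (it does not, because we use $\Hom_G(V,\tilde W)$ and the \emph{smooth} dual, not induction). Everything else is the standard dictionary between smooth representations and non-degenerate $\cH(G)$-modules, which can be cited from \cite{Cas} or the preliminaries above.
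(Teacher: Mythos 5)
Your proposal is correct and follows essentially the same route as the paper: both identify $(V\otimes_{\cH(G)}W)^*$ with $\cH$-balanced (equivalently, $G$-invariant) bilinear pairings and then match these with $\Hom_G(V,W^*)\cong\Hom_G(V,\tilde W)$ via the obvious mutually inverse maps. You merely spell out in detail (via coset indicators and idempotents $e_K$) the balanced-vs-invariant equivalence and the automatic smoothness in the second variable, which the paper dismisses as ``easy to see.''
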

\begin{proof}
Define $\phi:(V\otimes W)^* \to \RamiK{\Hom}_{\C}(V, W^*)$ by
$$\phi(\ell)(v)(w)=\ell(v\otimes w).$$

Since $V\otimes_{\cH(G)} W$ is a quotient of $V\otimes W$  we can consider $(V\otimes_{\cH(G)} W)^*$ as a subset of  $(V\otimes W)^*$
spesificaly $(V\otimes_{\cH(G)} W)^* \cong \{\ell\in (V\otimes_{\cH(G)} W)^*|\ell(g^{-1}v\otimes w)= \ell(v\otimes gw) \} $

It is easy to  see that $\phi((V\otimes_{\cH(G)} W)^*) \subset \RamiK{\Hom}_G(V, W^*)$
and that $\RamiK{\Hom}_G(V, W^*)\cong \RamiK{\Hom}_G(V, \tilde W)$. So we can consider $\phi$ as a map $(V\otimes_{\cH(G)}W)^*\to \RamiK{\Hom}_G(V,\tilde W).$

Now define $\psi: \RamiK{\Hom}_{\C}(V, \tilde W) \to (V\otimes W)^*$ by:
$$\psi(T)(v \otimes w)=\langle T(v), w \rangle.$$
Again 
$\psi(\RamiK{\Hom}_{G}(V, \tilde W))\subset (V\otimes_{\cH(G)}W)^*$.

Finally we notice that
 $\psi\circ\phi=\id$ and  $\phi \circ \psi=\id$. 
\end{proof}

As an immidiate consequence we get:
\begin{cor}\label{cor:tens.mult}
$$(V\otimes^L_{\cH(G)}W)^*\cong \RHom_G(V,\tilde W).$$
\end{cor}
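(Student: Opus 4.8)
The statement to prove is Corollary \ref{cor:tens.mult}: $(V\otimes^L_{\cH(G)}W)^*\cong \RHom_G(V,\tilde W)$.

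The plan is to derive this directly from the preceding lemma, which establishes the non-derived isomorphism $(V\otimes_{\cH(G)}W)^*\cong \Hom_G(V,\tilde W)$, by passing to derived functors on both sides and invoking compatibility of duality with cohomology. First I would take a projective resolution $P_\bullet \to V$ in the category $\cM(G)$; since $\cM(G)$ has enough projectives and finite homological dimension, such a resolution exists and can be taken to consist of objects of the form $\Sc(G)\otimes U$, which are projective as $\cH(G)$-modules on both sides. Then $V\otimes^L_{\cH(G)}W$ is computed by the complex $P_\bullet \otimes_{\cH(G)} W$, and $\RHom_G(V,\tilde W)$ is computed by $\Hom_G(P_\bullet,\tilde W)$. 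Applying the lemma termwise to the complex $P_\bullet$ gives an isomorphism of complexes $(P_\bullet\otimes_{\cH(G)}W)^*\cong \Hom_G(P_\bullet,\tilde W)$, where the dual of the chain complex $P_\bullet\otimes_{\cH(G)}W$ becomes a cochain complex.

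The one genuinely substantive point is that taking the linear dual $(-)^*$ commutes with taking cohomology here: since $\C$ is a field, $(-)^* = \Hom_\C(-,\C)$ is exact, so $H^i((C_\bullet)^*)\cong (H_i(C_\bullet))^*$ for any complex $C_\bullet$ of $\C$-vector spaces. Applying this with $C_\bullet = P_\bullet\otimes_{\cH(G)}W$ yields
$$H^i\bigl((P_\bullet\otimes_{\cH(G)}W)^*\bigr)\cong \bigl(H_i(P_\bullet\otimes_{\cH(G)}W)\bigr)^* = \bigl(\operatorname{Tor}_i^{\cH(G)}(V,W)\bigr)^*,$$
while on the other side $H^i(\Hom_G(P_\bullet,\tilde W)) = \Ext^i_G(V,\tilde W)$. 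Combining with the termwise isomorphism from the lemma, one gets $\bigl(\operatorname{Tor}_i^{\cH(G)}(V,W)\bigr)^*\cong \Ext^i_G(V,\tilde W)$, which is exactly the cohomological content of the claimed isomorphism in the derived category of $\C$-vector spaces. To phrase it at the level of objects of the derived category rather than just cohomology, I would note that the termwise isomorphism of the lemma is natural in $P_\bullet$ (it is natural in each variable), hence yields an isomorphism of complexes, and duality of complexes over a field is a well-defined exact operation, so the isomorphism $(V\otimes^L_{\cH(G)}W)^*\cong\RHom_G(V,\tilde W)$ holds in $D(\C\text{-Vect})$.

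I do not anticipate a serious obstacle: the work has essentially all been done in the lemma, and what remains is the routine bookkeeping of replacing $V$ by a suitable projective resolution and checking that $(-)^*$, being exact over a field, passes through the resolution without damage. The only place to be slightly careful is the variance: $\otimes^L$ is a homologically-graded (left-derived) construction while $\RHom$ is cohomologically graded, so one must make sure the dualization correctly converts the chain complex $P_\bullet\otimes_{\cH(G)}W$ into the cochain complex $\Hom_G(P_\bullet,\tilde W)$, matching degrees $i\leftrightarrow i$; this is automatic but worth stating explicitly.
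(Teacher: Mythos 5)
Your proposal is correct and matches the paper's intent: the paper states the corollary as an immediate consequence of the preceding lemma, and the implicit argument is exactly yours — resolve $V$ by projectives of the form $\Sc(G)\otimes U$, apply the lemma termwise, and use that $(-)^*=\Hom_\C(-,\C)$ is exact over the field $\C$ so dualization commutes with passing to cohomology. The points you flag (naturality of the lemma's isomorphism and the homological/cohomological grading bookkeeping) are the right ones and are handled correctly.
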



\section{Induction of general \RamiK{families}}\label{sec:app}

In this Appendix we introduce the object $\cG_{V}(M,\tau)$ and
 prove Theorem \ref{thm:intro.der_mutl_via_fiber}. 

\begin{defn}\label{not: sheaff}
Let $\mathbf P \subset \mathbf G$ be a parabolic subgroup  and $\mathbf M$ be its Levi factor. Let $\tau$ be an admissible representation of $M=\mathbf M (F)$ 
 Let $V \in \cM(G)$. We define
$$\cG_{V}(M,\tau):=ind_{M_{0}}^{M}\tilde \tau|_{M_0} \otimes^{L}_{{\cH(M)}} r_M^G(V),$$

and consider it as an object in the derived category of the category of $M/M_0$-modules. Equivalently, we \EitanG{will} consider it as an  object in the derived category of the category of quasi-coherent sheaves over $\fX_M$.
\end{defn}

It is enough to prove Theorem \ref{thm:intro.der_mutl_via_fiber}(\ref{thm:intro.der_mutl_via_fiber.1}-\ref{thm:intro.der_mutl_via_fiber.3}) since  Theorem \ref{thm:intro.der_mutl_via_fiber}(\ref{thm:intro.der_mutl_via_fiber.4}) follows from Theorem \ref{thm:intro.der_mutl_via_fiber}(\ref{thm:intro.der_mutl_via_fiber.1},\ref{thm:intro.der_mutl_via_fiber.3}) as in \S \ref{sec:eu}.





To compare $\cG_{V}(M,\tau)$ with $\cF_{V}((M,\rho))$  we need the following standard lemma:

\begin{lemma}\label{lem:ass}
\EitanG{Let $A$ and B be an associative algebras (not necessarily unital).
Let $M$ be a left $A$-module,  $K$ be a left $B$-module, and  $M$ be an $(A,B)$-bi-module}

Then $$\Hom_{A}(N,M) \otimes_{B} K=\Hom_{A}(N,M \otimes_{B} K).$$ 
\end{lemma}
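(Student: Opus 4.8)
\textbf{Proof proposal for Lemma \ref{lem:ass}.}

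The plan is to exhibit an explicit natural map between the two sides and check it is an isomorphism, reducing to the case $K = B$ (as a free left $B$-module) by a colimit/exactness argument. First I would define the natural transformation
$$\Phi \colon \Hom_{A}(N, M) \otimes_{B} K \longrightarrow \Hom_{A}(N, M \otimes_{B} K)$$
on elementary tensors by $\Phi(f \otimes k)(n) = f(n) \otimes k$, and verify that this is well-defined (the relation $fb \otimes k = f \otimes bk$ maps to $n \mapsto f(n)b \otimes k = f(n) \otimes bk$, which matches since the target uses the same $B$-balancing) and $A$-linear in the obvious way. The point to be careful about here is that $N$, $M$, $K$ are all non-unital modules, so ``$\Hom_A$'' should be understood in the appropriate smooth/non-degenerate sense consistent with the rest of the paper (the algebras in the application are Hecke algebras $\cH(M)$, which are idempotented); I would state this convention explicitly.

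Next I would prove $\Phi$ is an isomorphism when $K$ is a free left $B$-module, say $K = \bigoplus_{j} B$ (allowing an arbitrary index set, since $\cH(M)$ is not unital one takes free modules in the idempotented sense). In that case $M \otimes_B K \cong \bigoplus_j M$ and $\Hom_A(N,M) \otimes_B K \cong \bigoplus_j \Hom_A(N,M)$, and both sides of the claimed identity become $\bigoplus_j \Hom_A(N,M)$ with $\Phi$ the identity — provided $\Hom_A(N,-)$ commutes with the direct sums in question. This last commutation is the one genuinely non-formal input: in general $\Hom$ does not commute with infinite direct sums, so I would either (i) restrict to the situation actually needed, where $N$ is finitely generated over $A$ (in the application $N = ind_{M_0}^M \tilde\tau|_{M_0}$, whose relevant truncations are finitely generated over the Hecke algebra by the results already invoked in \S\ref{sec:eu}), or (ii) note that only finitely-presented $K$ arise, so finite direct sums suffice. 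I expect this bookkeeping — pinning down exactly which finiteness hypothesis makes the direct-sum interchange legitimate — to be the main obstacle, everything else being diagram-chasing.

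Finally I would pass from free $K$ to general $K$ by a presentation $B^{(I)} \to B^{(J)} \to K \to 0$: both functors $K \mapsto \Hom_A(N,M)\otimes_B K$ and $K \mapsto \Hom_A(N, M\otimes_B K)$ are right exact in $K$ (the first visibly; the second because $M \otimes_B -$ is right exact and, under the adopted finiteness hypothesis, $\Hom_A(N,-)$ is exact or at least preserves the relevant cokernels — in the paper's setting $N$ is projective, e.g. a summand of $\Sc(M) \otimes (\text{space})$, so $\Hom_A(N,-)$ is exact), and $\Phi$ is a natural transformation compatible with the presentation, so the five lemma gives that $\Phi$ is an isomorphism for $K$ from the free case. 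I would remark that in all intended applications $N$ is in fact projective and $K$ finitely presented, which makes every exactness claim above immediate, so the lemma holds as stated in the generality needed for Definition \ref{not: sheaff} and Theorem \ref{thm:intro.der_mutl_via_fiber}.
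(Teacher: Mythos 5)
The paper never proves Lemma \ref{lem:ass}: it is invoked as a ``standard lemma'' with no argument supplied, so there is no proof to compare yours against. Judged on its own, your argument is the standard one and is essentially correct, but only because of the hypotheses you add along the way: the natural map $\Phi(f\otimes k)(n)=f(n)\otimes k$ is the right one, the free case needs $N$ finitely generated over $A$ (to commute $\Hom_A(N,-)$ past infinite direct sums, and also to make $\Hom_A(N,M)$ a nondegenerate right $B$-module so that $\Hom_A(N,M)\otimes_B B\cong\Hom_A(N,M)$ in the idempotented setting), and the passage from free $K$ to general $K$ via a presentation $B^{(I)}\to B^{(J)}\to K\to 0$ needs $N$ projective so that $K\mapsto\Hom_A(N,M\otimes_B K)$ is right exact. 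You are right that these hypotheses are the crux and not cosmetic: with no condition on $N$ the statement is simply false, and indeed Corollary \ref{lem: Homs} as printed already fails for $V=W$ the trivial representation of a noncompact $G$, since then $\Hom_G(V,\Sc(G))=0$ while $\Hom_G(V,W)=\C$. What makes the paper's uses legitimate is that in every application the module sitting in the $\Hom$ slot is finitely generated and projective: $\rho|_{G^0}$ (a finite sum of cuspidal, hence compact, hence projective representations of $G^0$) in Lemma \ref{lem: cusp. gen.}, and $\Psi_M(M,\rho)=ind_{M_0}^M(\rho|_{M_0})$, a finitely generated projective generator by Theorem \ref{thm:cent}(\ref{Ber.cent:3}), in the comparison of $\cF_V((M,\rho))$ with $\cG_V(M,\rho)$. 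So your proof, with its hypotheses stated explicitly, covers exactly what the paper needs.

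Two small corrections to your write-up. First, the representation $ind_{M_0}^{M}\tilde\tau|_{M_0}$ of Definition \ref{not: sheaff} enters as the tensor factor, not as the argument $N$ of the $\Hom$; the finiteness and projectivity you need are those of $\Psi_M(M,\rho)$ and of $\rho|_{G^0}$, supplied by Bernstein's theorem rather than by the locally-finitely-generated results of \S\ref{sec:eu}. Second, in your last step you should note explicitly that every nondegenerate $B$-module does admit a presentation by (possibly infinite-rank) free nondegenerate modules over an idempotented algebra, so the reduction is available; once $N$ is finitely generated projective this is the only remaining point and the five-lemma comparison goes through as you say.
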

\begin{cor}\label{lem: Homs}
For any $V,W\in \cM(G)$ we have
$$\Hom_{G}(V,W) \cong \Hom_{G}(V,\Sc(G)) \otimes_{\cH(G)} W.$$
\end{cor}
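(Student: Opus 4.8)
The plan is to prove Corollary \ref{lem: Homs} as a direct application of Lemma \ref{lem:ass}, after recalling the well-known fact that the Hecke algebra $\cH(G)$ acts on itself on both sides and that $\Sc(G)$ is precisely $\cH(G)$ as a $(\cH(G),\cH(G))$-bimodule. First I would set up the bimodule bookkeeping: $\cH(G) = \Sc(G)$ carries the structure of an $(\cH(G),\cH(G))$-bimodule via left and right convolution, and for $V \in \cM(G)$ one has the standard identification $\Hom_G(\Sc(G), V) \cong V$ (evaluation against an approximate identity / using that $\Sc(G) = \varinjlim \Sc(G/K)$ and $\Hom_G(\Sc(G/K),V) = V^K$). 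Dually, for $V \in \cM(G)$ viewed as a left $\cH(G)$-module, one has $\Hom_G(V, \Sc(G))$, which I want to exhibit as a right $\cH(G)$-module via the right convolution action on $\Sc(G)$.

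The core step is then to apply Lemma \ref{lem:ass} with $A = B = \cH(G)$, with $N = V$ (the left $A$-module whose Hom we take), with the bimodule being $\Sc(G) = \cH(G)$ itself (an $(\cH(G),\cH(G))$-bimodule), and with $K = W$ (a left $B = \cH(G)$-module). The lemma gives
$$\Hom_{\cH(G)}(V, \Sc(G)) \otimes_{\cH(G)} W \;\cong\; \Hom_{\cH(G)}\bigl(V,\ \Sc(G) \otimes_{\cH(G)} W\bigr).$$
Now I would invoke the identification $\Sc(G) \otimes_{\cH(G)} W \cong W$ (since $\Sc(G)$ is the free rank-one bimodule, tensoring over $\cH(G)$ does nothing to a left module $W \in \cM(G)$ — concretely $\varphi \otimes w \mapsto \varphi \cdot w$, with inverse built from an approximate identity), and the identification $\Hom_{\cH(G)}(V, -) = \Hom_G(V,-)$ on $\cM(G)$. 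Combining these yields $\Hom_G(V, \Sc(G)) \otimes_{\cH(G)} W \cong \Hom_G(V, W)$, which is the claim.

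One subtlety I would be careful about is that $\cH(G)$ is non-unital (it is an idempotented algebra), so all the module categories in play are the non-degenerate/smooth ones, and Lemma \ref{lem:ass} is stated for not-necessarily-unital algebras precisely to accommodate this; I would check that the $(A,B)$-bimodule $\Sc(G)$ and the modules $V$, $W$ are non-degenerate so that the standard identifications $\Hom_G(V,-) = \Hom_{\cH(G)}(V,-)$ and $\Sc(G)\otimes_{\cH(G)} W \cong W$ are valid. The main obstacle, such as it is, is bookkeeping rather than mathematics: making sure the left/right module structures line up correctly when passing the $W$ tensor factor through the $\Hom$, and that the right $\cH(G)$-module structure on $\Hom_G(V,\Sc(G))$ used on the left-hand side is the one induced by right convolution on $\Sc(G)$, which matches the structure required to form $\Sc(G)\otimes_{\cH(G)} W$ on the right-hand side. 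Once the bimodule conventions are fixed, the corollary is an immediate specialization of Lemma \ref{lem:ass}.
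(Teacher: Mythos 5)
Your argument is precisely the paper's own: Corollary \ref{lem: Homs} is intended as an immediate specialization of Lemma \ref{lem:ass} with $A=B=\cH(G)$, $N=V$, the $(\cH(G),\cH(G))$-bimodule $\Sc(G)$ (left and right convolution), and $K=W$, combined with the identifications $\Sc(G)\otimes_{\cH(G)}W\cong W$ and $\Hom_{\cH(G)}(V,-)=\Hom_G(V,-)$ for non-degenerate smooth modules, which is exactly what you carry out, including the correct care about non-unitality. The only inaccuracy is the parenthetical claim $\Hom_G(\Sc(G),V)\cong V$ (that space is $\varprojlim_K V^K$ rather than $V$), but it plays no role in your argument.
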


We will also need:
\begin{lemma}\label{lem: cusp. gen.}
Let $\rho$ be an irreducible cuspidal representation of $G$ 
and \EitanG{recall that} $\EitanG{\Psi_G(\rho,G)}=ind_{G^{0}}^{G}(\rho|_{G^{0}}).$ 
Consider $\Hom_{G}(\EitanG{\Psi_G}(\rho,G),\Sc(G))$ as $G$-representation \EitanG{by letting  $G$ act on $S(G).$}
We have an isomorphism of $G$-representations:
 $$\Hom_{G}(\EitanG{\Psi_G}(\rho,G),\Sc(G)) \cong \EitanG{\Psi_G}(\tilde{\rho},G).$$
\end{lemma}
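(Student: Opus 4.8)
}
The plan is to unwind both sides as $G$-representations and match them. Recall that $\Psi_G(\rho,G) = \operatorname{ind}_{G^0}^G(\rho|_{G^0})$, a compactly induced representation. First I would compute $\Hom_G(\operatorname{ind}_{G^0}^G(\rho|_{G^0}),\Sc(G))$ using Frobenius reciprocity for compact induction: since $\operatorname{ind}_{G^0}^G$ is left adjoint to restriction (up to the modulus character, which is trivial here because $G^0$ is normal and cocompact-by-lattice, so $\Delta_{G^0}$ is trivial and $G/G^0$ is unimodular), we get $\Hom_G(\operatorname{ind}_{G^0}^G(\rho|_{G^0}),\Sc(G)) \cong \Hom_{G^0}(\rho|_{G^0},\Sc(G)|_{G^0})$. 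Here $\Sc(G)$ is taken with its \emph{right} regular action restricted to $G$ acting on the left, so one must be careful which $G$-action survives: the adjunction uses one copy of the $G$-action (say right translation) to build the $\Hom$, leaving the other copy (left translation) as the residual $G$-action on the $\Hom$-space.

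Next I would identify $\Hom_{G^0}(\rho|_{G^0},\Sc(G)|_{G^0})$ explicitly. Decompose $\Sc(G)$ as a $G^0\times G$-module: writing $G = G^0 \cdot \Lambda$ for a complement $\Lambda \cong G/G^0$ (or at least using the coset decomposition), $\Sc(G) \cong \Sc(G^0)\otimes \C[G/G^0]$ as a bimodule, with the relevant $G^0$-action on the first factor by one-sided translation. Then $\Hom_{G^0}(\rho,\Sc(G^0))$, where $G^0$ acts on $\Sc(G^0)$ by right translation, is a standard computation: it is isomorphic to $\tilde\rho$ (the contragredient, realized as matrix coefficients / the $\rho$-isotypic functional space), carrying the residual left-translation $G^0$-action as $\tilde\rho$. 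Tensoring back with $\C[G/G^0]$ and tracking how the lattice part acts reassembles this into $\operatorname{ind}_{G^0}^G(\tilde\rho|_{G^0}) = \Psi_G(\tilde\rho,G)$. Concretely I would exhibit the isomorphism by sending $\phi \in \Hom_G(\Psi_G(\rho,G),\Sc(G))$ to the function $g \mapsto$ (the value at $g$ of the pairing extracted from $\phi$), and check $G$-equivariance and bijectivity directly.

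The main obstacle I expect is bookkeeping of the two commuting $G$-actions on $\Sc(G)$ and making sure the residual action on the $\Hom$-space is the one claimed, together with the contragredient twist: it is easy to end up with $\rho$ instead of $\tilde\rho$, or with the wrong modulus character, if one conflates the left and right regular representations. A clean way to organize this is via Corollary \ref{lem: Homs} applied with $W = \Sc(G)$ (so that $\Hom_G(\Psi_G(\rho,G),\Sc(G)) \cong \Hom_G(\Psi_G(\rho,G),\Sc(G))\otimes_{\cH(G)}\Sc(G)$ is not circular — rather one uses that $\Sc(G)$ represents the identity functor appropriately) and the general principle that $\Hom_G(\operatorname{ind}_{G^0}^G \sigma, \Sc(G))$ is $\operatorname{ind}_{G^0}^G$ of the smooth dual of $\sigma$ when $\sigma$ is admissible and $G^0$ is open and normal of cocompact-modulo-center type — here $\sigma = \rho|_{G^0}$ is admissible since $\rho$ is. I would also note that $\rho|_{G^0}$ being a finite direct sum of irreducible cuspidals of $G^0$ keeps everything admissible, so the smooth dual behaves well. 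Once the action-tracking is pinned down, the isomorphism is essentially formal.
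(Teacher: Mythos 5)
Your plan is essentially the paper's proof: Frobenius reciprocity for compact induction from the open subgroup $G^0$ reduces the left-hand side to $\Hom_{G^0}(\rho,\Sc(G))$, the heart is the identification $\Hom_{G^0}(\rho,\Sc(G^0))\cong\tilde\rho$ as $G^0$-representations, and one then reassembles to $ind_{G^0}^{G}(\tilde\rho|_{G^0})=\Psi_G(\tilde\rho,G)$. The only difference is in the reassembly: the paper writes $\Sc(G)\cong\Sc(G^0)\otimes_{\cH(G^0)}\Sc(G)$ and pulls the tensor factor out of the $\Hom$ via Lemma \ref{lem:ass}, whereas you decompose $\Sc(G)$ over the cosets of $G^0$ and track the two commuting actions by hand; both work (your version needs $\Hom$ out of the finitely generated $\rho|_{G^0}$ to commute with the direct sum over $G/G^0$, which holds). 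One correction, though: the ``general principle'' you invoke — that $\Hom_G(ind_{G^0}^{G}\sigma,\Sc(G))$ is the compact induction of the smooth dual whenever $\sigma$ is admissible — is false as stated; for a non-cuspidal irreducible $\sigma$ the space $\Hom_{G^0}(\sigma,\Sc(G^0))$ vanishes rather than giving $\tilde\sigma$, since an irreducible subrepresentation of $\Sc(G^0)$ is necessarily compact. What makes the middle identification true here is precisely the cuspidality of $\rho$: its matrix coefficients are compactly supported on $G^0$, so $\Hom_{G^0}(\rho,\Sc(G^0))\cong\tilde\rho$ (this is the paper's Step 1, quoted from Bernstein's notes), and this is the only non-formal input to the lemma. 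Your write-up should attribute this step to cuspidality, not to admissibility; the remark about modulus characters is harmless but unnecessary, since compact induction from an open subgroup is left adjoint to restriction with no modulus twist.
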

\begin{proof}
$ $
\begin{enumerate}[Step 1]

\item Proof that  $\tilde{\rho} \cong \Hom_{G^{0}}(\rho,\Sc(G^{0}))$ as $G^{0}$ representations.\\
This follow immidately from \cite[\S\S5.3 Theorem 8]{BerLec}
\item Proof of the lemma.\\
$$\Hom_{G}(\EitanG{\Psi_G(\rho,G)},\Sc(G))=\Hom_{G^{0}}(\rho,\Sc(G))=\Hom_{\cH(G^0)}(\rho,\Sc(G^0) \otimes_{\cH(G^0)} \Sc(G))$$
By Lemma \ref{lem:ass}
$$ \Hom_{\cH(G^0)}(\rho,\Sc(G^0) \otimes_{\cH(G^0)} \Sc(G))= 
\Hom_{\cH(G^0)}(\rho,\Sc(G^0))\otimes_{\cH(G^0)} \Sc(G)=\Hom_{G^0}(\rho,\Sc(G^0))\otimes_{\cH(G^0)} \Sc(G)$$
Finaly, by the previous step
$$\Hom_{G^0}(\rho,\Sc(G^0))\otimes_{\cH(G^0)} \Sc(G)=\tilde{\rho} \otimes_{\cH(G^0)} \Sc(G)=\EitanG{\Psi_G}(\rho,G).$$



 






\end{enumerate}
\end{proof}




\begin{prop}
Theorem \ref{thm:intro.der_mutl_via_fiber}(\ref{thm:intro.der_mutl_via_fiber.2}) holds. Namely,
if $\rho$ is cuspidal then $\cG_{V}(M,{\rho})\cong\cF_{V}((M,\rho))$.
\end{prop}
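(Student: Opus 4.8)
The plan is to unwind both sides of the claimed isomorphism using the definitions and the lemmas just established. Recall that by Definition \ref{not: sheaff} we have $\cG_V(M,\rho) = ind_{M_0}^M \tilde\rho|_{M_0} \otimes^L_{\cH(M)} r_M^G(V)$, while by Definition \ref{not: sheaf} we have $\cF_V((M,\rho)) = \Hom_G(\Psi_G(M,\rho), V)$, which by the second adjointness (Theorem \ref{thm:cent}\eqref{Ber.cent:2.5}) equals $\Hom_M(\Psi_M(M,\rho), r_M^G(V))$. So after applying second adjointness, the statement reduces to a purely ``on $M$'' assertion: for a cuspidal representation $\rho$ of $M$ and any $W := r_M^G(V) \in \cM(M)$,
$$ \Hom_M(\Psi_M(M,\rho), W) \cong ind_{M_0}^M \tilde\rho|_{M_0} \otimes^L_{\cH(M)} W. $$
Thus I would first reduce to proving: for cuspidal $\rho$ and any $W \in \cM(G)$ (renaming $M$ to $G$ for notational ease, since the statement is about $M$ alone), $\Hom_G(\Psi_G(G,\rho), W) \cong \Psi_G(\tilde\rho, G) \otimes^L_{\cH(G)} W$, where I am using $\Psi_G(G,\rho) = ind_{G^0}^G(\rho|_{G^0})$ and $ind_{G^0}^G \tilde\rho|_{G^0} = \Psi_G(\tilde\rho, G)$.

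The key input is Lemma \ref{lem: cusp. gen.}, which gives $\Hom_G(\Psi_G(\rho,G), \Sc(G)) \cong \Psi_G(\tilde\rho, G)$ as $G$-representations. Combining this with Corollary \ref{lem: Homs} (itself a consequence of Lemma \ref{lem:ass}), which says $\Hom_G(V, W) \cong \Hom_G(V, \Sc(G)) \otimes_{\cH(G)} W$ for any $V, W \in \cM(G)$, I get immediately that
$$ \Hom_G(\Psi_G(\rho,G), W) \cong \Hom_G(\Psi_G(\rho,G), \Sc(G)) \otimes_{\cH(G)} W \cong \Psi_G(\tilde\rho, G) \otimes_{\cH(G)} W $$
at the level of $H^0$. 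To promote this to an isomorphism in the derived category, I would observe that $\Psi_G(\rho, G) = ind_{G^0}^G(\rho|_{G^0})$ is a projective object of $\cM(G)$ (this is part of Theorem \ref{thm:cent}\eqref{Ber.cent:3}: $\Psi_G(G,\rho)$ is a projective generator of its Bernstein block), so $\RHom_G(\Psi_G(\rho,G), W) = \Hom_G(\Psi_G(\rho,G), W)$ is concentrated in degree zero; and similarly $\Psi_G(\tilde\rho, G) \cong \Hom_G(\Psi_G(\rho,G), \Sc(G))$ is a projective $\cH(G)$-module when restricted appropriately — more precisely, the derived tensor product $\Psi_G(\tilde\rho,G) \otimes^L_{\cH(G)} W$ agrees with the underived one because $\Psi_G(\tilde\rho,G)$, being $\Hom_G(P, \Sc(G))$ for $P$ projective, is flat over $\cH(G)$ (equivalently, $\otimes_{\cH(G)} W$ is exact on the relevant subcategory). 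This flatness is the crux: I would either cite it as standard (a summand of a free $\cH(G)$-module, after passing to a splitting idempotent $K$) or verify it by noting that $\Hom_G(\Psi_G(\rho,G), -)$ is exact, hence $\Hom_G(\Psi_G(\rho,G), \Sc(G)) \otimes_{\cH(G)} -$ is exact by Corollary \ref{lem: Homs}.

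I expect the main obstacle to be the bookkeeping around the derived versus underived tensor product — specifically, making precise why $\Psi_G(\tilde\rho, G)$ is flat (or acyclic for $\otimes_{\cH(G)} W$) over $\cH(G)$. Since $\cH(G)$ is non-unital, one should work within a Bernstein block: choose a splitting idempotent / compact open $K$ as in Theorem \ref{thm:cent}\eqref{Ber.cent:basis}, so that everything happens over the unital Noetherian algebra $\cH(G,K)$, where $\Psi_G(\rho,G)^K$ is a finitely generated projective module and hence $\Hom(\Psi_G(\rho,G)^K, \cH(G,K))$ is finitely generated projective, in particular flat. Then the underived computation above is automatically the derived one, and second adjointness (Theorem \ref{thm:cent}\eqref{Ber.cent:2.5}) applied in the derived sense identifies $\Hom_M(\Psi_M(M,\rho), r_M^G(V))$ with $\cF_V((M,\rho))$, completing the proof. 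The rest of the argument is a direct chain of the cited lemmas, so once the flatness point is pinned down the proposition follows formally.
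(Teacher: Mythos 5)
Your proposal is correct and takes essentially the same route as the paper: second adjointness to pass to $\Hom_{M}(ind_{M_0}^M(\rho|_{M_0}),r_M^G(V))$, then Corollary \ref{lem: Homs} and Lemma \ref{lem: cusp. gen.} to identify this with $ind_{M_0}^M(\tilde\rho|_{M_0})\otimes_{\cH(M)}r_M^G(V)$, and finally projectivity of the induced representation to replace $\otimes^L_{\cH(M)}$ by $\otimes_{\cH(M)}$. Your extra discussion of flatness (e.g.\ exactness of $\Hom_G(\Psi_G(\rho,G),-)$ transported through Corollary \ref{lem: Homs}) only spells out what the paper compresses into the phrase ``since $\Psi_{M}(M,\rho)$ is a projective object.''
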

\begin{proof}
By definition we have
$$\cF_{V}((M,\rho))=\Hom_{G}(\Psi_{G}(M,\rho),V)=\Hom_{G}(\bar i_M^G(ind_{M_0}^M(\rho|_{M_0})),V).$$
Applying the second adjointness theorem we get
$$\Hom_{G}(\bar i_M^G(ind_{M_0}^M(\rho|_{M_0})),V)\cong \Hom_{M}(ind_{M_0}^M(\rho|_{M_0}),r_{M}^{G} V).$$

By \EitanG{Corollary} \ref{lem: Homs} we get 
$$\cF_{V}((M,\rho)) \cong \Hom_{M}(ind_{M_0}^M(\rho|_{M_0}),\Sc(M)) \otimes_{{\cH(M)}}  r_{M}^{G}(V).$$

By Lemma \ref{lem: cusp. gen.}
We obtain 
$$\cF_{V}((M,\rho))  \cong 
ind_{M_0}^M(\tilde{\rho}|_{M_0}) \otimes_{{\cH(M)}}  r_{M}^{G}(V)$$


Finally, since $\Psi_{M}(M,\rho)$  is a projective object, we obtain

$$
\cG_{V}(M,{\rho}) \cong \Psi_{M}(M,\tilde{\rho})  \otimes^{L}_{{\cH(M)} } r_{M}^{G} (V) \cong \Psi_{M}(M,\tilde{\rho})  \otimes_{{\cH(M)} } r_{M}^{G} (V)  = \cF_{V}((M,\rho))
$$
\end{proof}


\begin{prop}
Theorem \ref{thm:intro.der_mutl_via_fiber}(\ref{thm:intro.der_mutl_via_fiber.1}) holds. Namely,
let $V \in \cM(G)$ 
Then,

$$\RHom_G(V,i_M^G(\chi \tau))\cong \RHom_{O(\RamiD{\CH_M})}(\cG_{V}(M,\tau), \delta_{\chi}),$$

 where $\delta_{\chi}$ is the skyscraper sheaf over $\chi \in \RamiD{\CH_M}.$
\end{prop}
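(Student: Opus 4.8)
\textbf{Proof strategy for Theorem \ref{thm:intro.der_mutl_via_fiber}(\ref{thm:intro.der_mutl_via_fiber.1}).}
The plan is to compute both sides independently and match them, following the same template as the proof of Theorem \ref{lem: mutl_via_fiber}, but now at the level of derived functors and using the tensor description of $\cG_V(M,\tau)$. First I would rewrite the left-hand side using derived Frobenius reciprocity (the second adjointness theorem, Theorem \ref{thm:cent}(\ref{Ber.cent:2.5}), which is exact on the relevant side so it derives cleanly): $\RHom_G(V, i_M^G(\chi\tau)) \cong \RHom_M(r_M^G(V), \chi\tau)$. The point of $i_M^G$ versus $\overline i_M^G$ here is harmless since the two inductions are related by an intertwining isomorphism after twisting, or one simply works with $\overline i_M^G$ throughout; I would note that at the outset so the adjunction applies literally.

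Next I would reduce the whole statement to the group $M$: everything on both sides only depends on $r_M^G(V)\in\cM(M)$ and on $\tau$, $\chi$. So it suffices to prove, for $W\in\cM(M)$, that $\RHom_M(W,\chi\tau)\cong\RHom_{O(\CH_M)}(\cG_W(M,\tau),\delta_\chi)$ where now $\cG_W(M,\tau)=\ind_{M_0}^M\tilde\tau|_{M_0}\otimes^L_{\cH(M)}W$. The key computation is the derived-tensor/Hom adjunction: by Corollary \ref{lem: Homs} (its derived form, using that $\Sc(M)$ is flat as a right $\cH(M)$-module so the tensor is underived there) we have $\RHom_M(W,\chi\tau)\cong\RHom_{\cH(M)}(W, \Hom_M(\Sc(M)\text{-resolution}, \chi\tau))$; more usefully, I would instead write $\RHom_M(W,\chi\tau)\cong(\chi\tilde\tau\otim^L_{\cH(M)}W)^*$ via Corollary \ref{cor:tens.mult} applied to $M$ — this is precisely the ``replace $\Hom$ by tensor product'' device advertised in the introduction. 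Then I must identify $(\chi\tilde\tau\otimes^L_{\cH(M)}W)^*$ with $\RHom_{O(\CH_M)}(\cG_W(M,\tau),\delta_\chi)$. Since $\ind_{M_0}^M\tilde\tau|_{M_0}\otimes_{O(\CH_M)}\delta_\chi\cong \chi\tilde\tau$ (the fiber of the universal family $\ind_{M_0}^M\tilde\tau|_{M_0}=\tilde\tau|_{M_0}\otimes O(\CH_M)$-type construction at the character $\chi$ is the twist $\chi\tilde\tau$ — this is a base-change statement over the affine torus $\CH_M=\spec\C[M/M_0]$), a standard associativity manipulation of the form $(X\otimes^L_{\cH(M)}W)\otimes^L_{O(\CH_M)}\delta_\chi\cong(X\otimes^L_{O(\CH_M)}\delta_\chi)\otimes^L_{\cH(M)}W$ reduces the claim to $(\cG_W(M,\tau)\otimes^L_{O(\CH_M)}\delta_\chi)^*\cong\RHom_{O(\CH_M)}(\cG_W(M,\tau),\delta_\chi)$, which is Grothendieck/local duality for the skyscraper on a point of the smooth affine torus: $\RHom_{O(\CH_M)}(P,\delta_\chi)\cong(P\otimes^L_{O(\CH_M)}\delta_\chi)^\vee[-\dim\CH_M]$ up to a shift, or, working with linear-algebra duals of finite-dimensional graded pieces fiberwise, simply $(P\otimes^L\delta_\chi)^*\cong\RHom(P,\delta_\chi)$ when $\delta_\chi$ is a one-dimensional module over the regular local ring localized at $\chi$ (Koszul self-duality of $\delta_\chi$).

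The main obstacle I anticipate is bookkeeping the non-unital algebra $\cH(M)$ and making sure all the $\otimes^L$ and $\RHom$ manipulations (associativity, base change, passage between $\cH(M)$-module structures and $O(\CH_M)$-module structures via the Bernstein center) are legitimate: one must use a splitting idempotent / the decomposition of $\cM(M)$ into Bernstein blocks so that $\Sc(M)$ may be replaced by $\Sc(M/K)$ for a splitting $K$, making $\cH(M,K)$ unital and Noetherian, and one must check that $\ind_{M_0}^M\tilde\tau|_{M_0}$ is flat (or has a bounded flat resolution) as a right $\cH(M)$-module so that $\cG_W(M,\tau)$ is a genuinely bounded complex and the duality identities hold. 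I would handle this by restricting to a single Bernstein component containing $\tau$ from the start (the statement is component-wise), invoking Theorem \ref{thm:cent}(\ref{Ber.cent:basis}) to get a splitting subgroup $K$, and then all the algebras in sight are unital Noetherian and the flatness of $\Sc(M/K)$ over $\cH(M,K)$ is standard. Once the setup is in place, the chain of adjunctions is formal and the only genuine input is Corollary \ref{cor:tens.mult} together with the base-change identity $\ind_{M_0}^M\tilde\tau|_{M_0}\otimes_{O(\CH_M)}\delta_\chi\cong\chi\tilde\tau$.
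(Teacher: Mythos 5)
Your proposal is correct in substance and follows essentially the same route as the paper: reduce everything to the group $M$ via (derived) Frobenius reciprocity applied to $r_M^G(V)$, convert both sides into duals of derived tensor products (Corollary \ref{cor:tens.mult} on the representation side, the identification $\RHom_{O(\CH_M)}(P,\delta_\chi)\cong\bigl(P\otimes^L_{O(\CH_M)}\delta_\chi\bigr)^*$ on the sheaf side), and finish by associativity of the tensor product together with the base-change identity for the fiber of $ind_{M_0}^M\tilde\tau|_{M_0}$ at $\chi$. Two small corrections to your citations: the adjunction $\RHom_G(V,i_M^G(\chi\tau))\cong\RHom_M(r_M^G(V),\chi\tau)$ is ordinary Frobenius reciprocity ($r_M^G$ is left adjoint to $i_M^G$, both exact), not the second adjointness of Theorem \ref{thm:cent}(\ref{Ber.cent:2.5}) (which has the induction in the first variable); and no Grothendieck/local duality or shift by $\dim\CH_M$ enters anywhere --- since $\delta_\chi$ is the one-dimensional module given by the $\C$-point $\chi$, the identity $\RHom_{O(\CH_M)}(P,\delta_\chi)\cong\RHom_\C\bigl(P\otimes^L_{O(\CH_M)}\delta_\chi,\C\bigr)$ is plain Hom-tensor adjunction with no shift, which is exactly what the paper's appeal to Corollary \ref{cor:tens.mult} plus ``associativity of tensor product'' implicitly uses.
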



\begin{proof}
$ $
\begin{enumerate}[Step 1]
\item Proof for the case $M=G.$\\
By Corollary \ref{cor:tens.mult} it is enough to prove that 
$$V \otimes^{L}_{\cH(G)} i_M^G(\chi \tau)) \cong \cG_{V}(M,\tau) \otimes_{{O(\RamiD{\CH_M})}} \delta_{\chi}$$
This follows from associativity of tensor product.

\item Proof the general case.\\
By the previous step we have
$$\RHom_{O(\CH_M)}(\cG_{V}(M,\tau), \delta_{\chi}))\cong \RHom_M(r_M^G(V),\chi \tau)$$
By Frobenius reciprocity we get 
$$\RHom_M(r_M^G(V),\chi \tau)\cong \RHom_G(V,i_M^G(\chi \tau))$$

\end{enumerate}
\end{proof}

\begin{prop}\label{prop:perfect}
The following generalization of Theorem \ref{thm:intro.der_mutl_via_fiber}(\ref{thm:intro.der_mutl_via_fiber.3}) holds. Namely,
let $V \in \cM(G)$ 
be such that for any compact open $K<G$ the module  $V^K$ is finitely generated over $\cH(G,K)$.  
Then, $\cG_{V}(M,\tau)$ is perfect.
\end{prop}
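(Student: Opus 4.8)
The goal is to show that $\cG_V(M,\tau) = \ind_{M_0}^M\tilde\tau|_{M_0}\otimes^L_{\cH(M)}r_M^G(V)$ is a perfect complex of $O(\CH_M)$-modules, i.e. is quasi-isomorphic to a bounded complex of finitely generated projective $O(\CH_M)$-modules. The plan is to reduce the claim to a finite-generation statement for a single module together with the fact that $\cH(M)$ (more precisely the relevant Bernstein block) is Noetherian of finite homological dimension. First I would fix a splitting open compact subgroup $K<G$ such that $V^K$ generates $V$ (Theorem \ref{thm:cent}(\ref{Ber.cent:basis})), and a splitting open compact subgroup $K_M<M$ controlling the Bernstein components through which $r_M^G(V)$ and $\ind_{M_0}^M\tilde\tau|_{M_0}$ can interact; since $r_M^G$ is exact and $r_M^G(\Sc(G/K))$ lies in $\cM(M,K_M)$ for suitable $K_M$ (using Theorem \ref{thm:cent}(\ref{Ber.cent:basis}), last item), one checks that $r_M^G(V)$ is itself locally finitely generated: indeed $V$ is a quotient of a finite sum of copies of $\Sc(G/K)$, and applying the exact functor $r_M^G$ and then $(-)^{K_M\cap M}$ exhibits $r_M^G(V)^{K_M}$ as a quotient of a finitely generated $\cH(M,K_M)$-module, which is finitely generated because $\cH(M,K_M)$ is Noetherian (Theorem \ref{thm:cent}(\ref{Ber.cent:not})).

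Next I would compute the derived tensor product inside the Bernstein block. After projecting to the block(s) of $\cM(M)$ hit by $\ind_{M_0}^M\tilde\tau|_{M_0}$, the algebra $\cH(M,K_M)$ becomes a Noetherian $O(\CH_M)$-algebra, and the object $\ind_{M_0}^M\tilde\tau|_{M_0}$, being admissible, has $K_M$-invariants that are finitely generated over $O(\CH_M)$ (admissibility of $\tau$ makes $\tilde\tau|_{M_0}$ admissible for $M_0$, and $\ind_{M_0}^M$ of an admissible representation of the cocompact normal subgroup $M_0$ is finitely generated over $\cH(M)$, with $K_M$-invariants finitely generated over $O(\CH_M)=O(M/M_0)$). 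Thus both tensor factors, after passing to $K_M$-invariants, are finitely generated modules over the Noetherian algebra $\cH(M,K_M)$, which in turn is a finitely generated module over... no — more carefully, $\cH(M,K_M)$ restricted to the block is finite as a module over its center, which is a finite extension of $O(\CH_M)$. I would therefore take a resolution of $r_M^G(V)^{K_M}$ by finitely generated free $\cH(M,K_M)$-modules (possible since the module is finitely generated and $\cH(M,K_M)$ is Noetherian), tensor with $\ind_{M_0}^M\tilde\tau|_{M_0}{}^{K_M}$ over $\cH(M,K_M)$, and observe each term is a finitely generated $O(\CH_M)$-module. Since $O(\CH_M)=\C[M/M_0]$ is a Laurent polynomial ring — regular of finite global dimension $d=\dim\CH_M$ — the complex $\cG_V(M,\tau)$ has bounded cohomology (bounded above by $d$ plus the homological dimension of $\cM(M)$, bounded below by the length of a complex computing $\mathrm{Tor}$) consisting of finitely generated modules, hence is quasi-isomorphic to a bounded complex of finitely generated projectives, i.e. perfect.

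The main obstacle, and the step I would spend the most care on, is the bookkeeping of Bernstein components: one must check that the derived tensor product $\ind_{M_0}^M\tilde\tau|_{M_0}\otimes^L_{\cH(M)}r_M^G(V)$ ``lives'' over a finitely generated piece of the Bernstein center, i.e. that only finitely many blocks contribute and on each the relevant Hecke algebra is module-finite over an affine $\C$-algebra with $O(\CH_M)$ acting through a finite morphism — this is where $\tau$ being admissible (not just finitely generated) is used, to control the support of $\ind_{M_0}^M\tilde\tau|_{M_0}$. Once this is in place, the perfectness is a formal consequence of Noetherianity plus finite global dimension, exactly parallel to the argument in the proof of Theorem \ref{thm.f.mul} and to the coherence argument in \S\ref{sec:eu} (via \cite[Satz 3]{Len}), now carried out at the level of complexes rather than single modules. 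I would also remark that, combined with the isomorphism of Theorem \ref{thm:intro.der_mutl_via_fiber}(\ref{thm:intro.der_mutl_via_fiber.1}), perfectness of $\cG_{\Sc(X)}(M,\tau)$ immediately yields finiteness of $\sum_i\dim\Ext^i_G(\Sc(X),i_M^G(\chi\tau))$ and, via local constancy of Euler characteristics of perfect complexes on the smooth variety $\CH_M$, the independence statement in part (\ref{thm:intro.der_mutl_via_fiber.4}).
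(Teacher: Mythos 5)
Your overall architecture parallels the paper's: reduce to a statement over $M$ by showing $r_M^G(V)$ is locally finitely generated, pass to $K$-invariants for a splitting subgroup, and deduce perfectness from Noetherianity of the Hecke algebra, finite homological dimension, admissibility of $\tau$, and regularity of $O(\CH_M)$. Within this frame your finiteness mechanism is a genuine variant: the paper cancels the induced factor, rewriting the complex as $\tau^K\otimes^L_{\cH_K(M_0)}r_M^G(V)^K$ (Corollary \ref{cor:ten.K}) and then applies Lemma \ref{lem:as.alg} together with the factorization $\cH(G_1)\cong\cH(G_0)\otimes\C[\Lambda_{Z(G)}]$ of Lemma \ref{lem:M1} (for $M$ in place of $G$), whereas you resolve $r_M^G(V)^{K_M}$ by finitely generated free $\cH(M,K_M)$-modules and use finite generation of $(ind_{M_0}^{M}\tilde\tau|_{M_0})^{K_M}$ over $O(\CH_M)$. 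That key claim is true, but not for the reasons you give: $M_0$ is not cocompact in $M$ (the quotient is a lattice), and finite generation over $\cH(M)$ is neither clear nor needed; what makes it work is the tensor identity $ind_{M_0}^{M}(\tilde\tau|_{M_0})\cong\tilde\tau\otimes\C[M/M_0]$ (available because $\tilde\tau|_{M_0}$ extends to $M$), which together with admissibility shows these invariants are free of rank $\dim\tilde\tau^{K_M}$ over $O(\CH_M)$. Your parenthetical assertion that on the relevant block the center of $\cH(M,K_M)$ is a finite extension of $O(\CH_M)$ is false whenever the block is attached to a proper Levi $L\subset M$, $L\neq M$ (the center then has dimension $\dim\fX_L>\dim\fX_M$); fortunately your final argument does not use it.

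The genuine gap is in the first reduction. You deduce local finite generation of $r_M^G(V)$ from the premise that $V$ is a quotient of a finite sum of copies of $\Sc(G/K)$, i.e.\ from $V$ being finitely generated; the hypothesis only says that every $V^K$ is finitely generated over $\cH(G,K)$, and a locally finitely generated $V$ need not be finitely generated (e.g.\ an infinite direct sum whose summands require deeper and deeper level, spread over infinitely many Bernstein components). This is precisely what Lemma \ref{JacIsFg} of the paper is designed to circumvent: one applies the finite-generation argument to the finitely generated subrepresentation $G\cdot V^K$ and, crucially, invokes Jacquet's lemma --- the surjectivity of $V^K\to r_M^G(V)^{K\cap M}$ for a suitable $K$, cf.\ Theorem \ref{thm:cent}(\ref{Ber.cent:basis}) --- to see that $M\cdot r_M^G(V)^{K'}$ is contained in $r_M^G(G\cdot V^K)$, concluding by Noetherianity; without this surjectivity your reduction does not go through as stated. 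A smaller issue of the same nature: your control of the Bernstein components rests on the assertion that admissibility of $\tau$ forces only finitely many blocks to contribute (equivalently, that the induced family is generated by its $K_M$-invariants for a single splitting $K_M$); admissibility alone does not give this. You are no worse off than the paper, whose proof simply chooses $K$ with $\tau$ generated by $\tau^K$, but your stated justification is not a proof of that choice being possible.
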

For the proposition we will need the following \EitanG{lemmas}

\begin{lem}\label{JacIsFg}
 Let $V \in \cM(G)$ be a locally finitely generated module.
Then for any Levi group $M$, the representation $r_{M}^G(V)$ is locally finitely generated.
\end{lem}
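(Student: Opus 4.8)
\textbf{Proof proposal for Lemma \ref{JacIsFg}.}

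The plan is to reduce the statement about $r_M^G(V)$ over all compact open subgroups of $M$ to a single statement about one well-chosen compact open subgroup $K$ of $G$, using the fact (Theorem \ref{thm:cent}(\ref{Ber.cent:basis})) that splitting subgroups form a local base. First I would fix an arbitrary compact open $K_M < M$ and aim to show $r_M^G(V)^{K_M}$ is finitely generated over $\cH(M, K_M)$. Since we may shrink $K_M$ without loss of generality (a finitely generated module over a Noetherian algebra stays finitely generated after passing to the subalgebra of a finite-index subgroup — or more directly, $V^{K_M'}$ finitely generated implies $V^{K_M}$ finitely generated as it is a direct summand / the image of a surjection of Hecke bimodules), I would choose $K_M = K \cap M$ for a splitting subgroup $K < G$ small enough that also $K = (K \cap \bar N)(K \cap M)(K \cap N)$ has an Iwahori-type factorization with respect to the parabolic $\bfP = \bfM \bfN$ and its opposite.

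The key step is then the surjectivity statement in Theorem \ref{thm:cent}(\ref{Ber.cent:basis}), namely that for a splitting subgroup $K$ the natural map $V^K \to r_M^G(V)^{K \cap M}$ is onto. Since $V$ is locally finitely generated, $V^K$ is finitely generated as a module over $\cH(G,K)$. I would combine this with the compatibility of the Jacquet functor with the Hecke-algebra picture: the $\cH(M, K\cap M)$-module structure on $r_M^G(V)^{K\cap M}$ is obtained from the $\cH(G,K)$-module structure on $V^K$ via a (unital, as $K$ is nice) algebra homomorphism $t_M^G : \cH(G,K) \to \cH(M, K\cap M)$ (the Jacquet restriction of Hecke algebras, cf. the Bushnell–Kutzko / Bernstein formalism), under which the surjection $V^K \twoheadrightarrow r_M^G(V)^{K\cap M}$ is semilinear. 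Hence the image is generated over $\cH(M,K\cap M)$ by the images of a finite set of $\cH(G,K)$-generators of $V^K$, so $r_M^G(V)^{K\cap M}$ is finitely generated over $\cH(M, K \cap M)$.

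Finally I would descend back from $K \cap M$ to the original arbitrary $K_M$: given the finite generation for the small splitting-type subgroup $K \cap M \subset K_M$, finite generation for $K_M$ follows because $r_M^G(V)^{K_M}$ is a direct summand of $r_M^G(V)^{K\cap M}$ as an $\cH(M,K_M)$-module (via the idempotent $e_{K_M}$), or simply because $\cM(M, K_M)$ is a direct summand category and finitely generated modules restrict to finitely generated modules under the associated projections. This establishes local finite generation of $r_M^G(V)$.

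The main obstacle I anticipate is making precise the compatibility of $r_M^G$ with the Hecke-module structures at the level of a \emph{specific} splitting subgroup $K$ — i.e. producing the algebra map $t_M^G : \cH(G,K) \to \cH(M,K\cap M)$ and checking that the surjection of Theorem \ref{thm:cent}(\ref{Ber.cent:basis}) is semilinear over it. This is standard (it is essentially the content of the fourth bullet of Theorem \ref{thm:cent}(\ref{Ber.cent:basis}) together with the functoriality of $V \mapsto V^K$), but one must be slightly careful to choose $K$ with an Iwahori factorization adapted to $\bfP$ so that the restriction map on Hecke algebras is genuinely an algebra homomorphism rather than merely a linear map; alternatively one can avoid this entirely by invoking the fact that $r_M^G$ sends finitely generated objects of $\cM(G)$ to finitely generated objects of $\cM(M)$ (it is a left adjoint, by Theorem \ref{thm:cent}(\ref{Ber.cent:2.5}) it is even exact and preserves arbitrary sums, hence preserves the property of being a quotient of a finite sum of projective generators $\Sc(G/K')$), and then noting that a finitely generated smooth representation of $M$ is automatically locally finitely generated.
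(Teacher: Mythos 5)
Your main route has a genuine gap at its key step. There is no algebra homomorphism $t_M^G:\cH(G,K)\to\cH(M,K\cap M)$ making the Jacquet surjection $V^K\to r_M^G(V)^{K\cap M}$ semilinear, not even for $K$ with an Iwahori factorization adapted to $\bfP$. To see this, take $K=I$ an Iwahori subgroup, $M=T$ a maximal split torus, and $V=i_T^G(\chi)$ a generic irreducible unramified principal series: then $\dim V^I=\dim\big(r_T^G(V)\big)^{T\cap I}=|W|$ and the Jacquet map is a bijection, so semilinearity would force the $\cH(G,I)$-action on the irreducible $|W|$-dimensional module $V^I$ to factor through the commutative algebra $\cH(T,T\cap I)$, which is impossible for $|W|\geq 2$. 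The true compatibility (Casselman, Bushnell--Kutzko) goes in the opposite direction --- (the positive part of) $\cH(M,K\cap M)$ maps into $\cH(G,K)$ and acts on $V^K$ --- and with that direction the step ``images of $\cH(G,K)$-generators of $V^K$ generate $r_M^G(V)^{K\cap M}$ over $\cH(M,K\cap M)$'' no longer follows. Also note that the fourth bullet of Theorem \ref{thm:cent}(\ref{Ber.cent:basis}) asserts only the surjectivity of $V^K\to r_M^G(V)^{K\cap M}$; it contains no Hecke-equivariance statement.

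Your fallback is closer to the actual argument but is incomplete as stated, for two reasons. First, adjointness alone does not show that $r_M^G$ preserves finite generation: it only exhibits $r_M^G(V)$ as a quotient of finitely many modules $r_M^G(\Sc(G/K'))$, whose finite generation over $M$ is exactly the point at issue; the paper obtains this fact from the Iwasawa decomposition (a $G$-finitely generated smooth module is $P$-finitely generated by smoothness plus compactness of $K_0$, hence its Jacquet module is $M$-finitely generated). Second, and more importantly, $V$ is only \emph{locally} finitely generated, so ``$r_M^G$ preserves f.g.\ and f.g.\ implies locally f.g.'' cannot be applied to $V$ itself; one still has to localize, and this is where the paper's proof differs from both of your routes: given $K'<M$, choose $K<G$ inside some $K''$ with $K''\cap M\subset K'$ and satisfying Jacquet's lemma, so that $V^K\to r_M^G(V)^{K\cap M}$ is onto; then $M\cdot r_M^G(V)^{K'}\subset M\cdot r_M^G(V)^{K\cap M}\subset r_M^G(G\cdot V^K)$, where $G\cdot V^K$ is finitely generated (as $V^K$ is f.g.\ over $\cH(G,K)$), hence $r_M^G(G\cdot V^K)$ is $M$-finitely generated by the Iwasawa step, and Noetherianity gives that the subrepresentation $M\cdot r_M^G(V)^{K'}$ is finitely generated, whence $r_M^G(V)^{K'}$ is finitely generated over $\cH(M,K')$. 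You have all the ingredients on the table (the surjectivity, preservation of finite generation, Noetherianity), but neither of your two assemblies puts them together correctly: the first rests on a nonexistent homomorphism, and the second never uses the surjectivity to reduce to a finitely generated subrepresentation of $V$.
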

\begin{proof}
First note that by Iwasawa decomposition (see e.g. page 40 \cite{BerLec}) if $V$ is finitely generated over $G$ then it is finitely generated over $P$. Thus if $V$ is finitely generated over $G$ then $r_{M}^G(V)$ is finitely generated over $M$.

Now, let $V \in \cM(G)$ be a locally finitely generated module, and let $K'\subset M$ be an open compact subgroup. 
For any open compact subgroup $K' < M$, one can find open compact subgroup  $K''<G$ s.t. $K''\cap M \subset K'$. By Bruhat theorem (page 41 \cite{BerLec}) one can find  open compact subgroup  $K < K''$ satisfying the conditions of Jacquet's lemma (page 65 in loc. cit.). Thus the map
$$V^K\to r_{M}^G(V)^{K\cap M}$$ is onto. 

This implies that  $$r_{M}^G(G \cdot V^K) \supset r_{M}^G(M \cdot V^K)= M\cdot r_{M}^G(V)^{K\cap M}\supset M\cdot r_{M}^G(V)^{K'}.$$ We know that $G \cdot V^K$ is finitely generated over $G$. Thus $r_{M}^G(G \cdot V^K)$ finitely generated over $M$. By Noetherity, this implies that $M\cdot r_{M}^G(V)^{K'}$ is finitely generated over $M$, \EitanG{in other words $r_{M}^G(V)^{K'}$ is finitely generated over $\cH_{K'}(M)$}.

\end{proof}
\begin{lemma}\label{lem:as.alg} Let $B$ a unital associative algebra and $C$ be a commutative algebra. Assume that $B \otimes C$ have finite homological dimension.
Let $V$ be finitely generated module over $B \otimes C$ and  $T$ be finite dimensional right module over $B \otimes C$. 
Then $\Tor_B^*(T, V)$ is finitely generated over $C$ with respect to the diagonal action.
\end{lemma}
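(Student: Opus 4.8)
The plan is to reduce the statement to a standard homological fact by choosing a good resolution. First I would observe that since $B\otimes C$ has finite homological dimension and $V$ is finitely generated over $B\otimes C$, we can resolve $V$ by a bounded complex $P_\bullet$ of finitely generated projective $B\otimes C$-modules; in fact since $C$ is commutative and we want to track the $C$-action, it is cleanest to take the $P_i$ to be finitely generated \emph{free} $B\otimes C$-modules in each degree up to the homological dimension, accepting that the last syzygy is merely projective, which is harmless because a finitely generated projective module over $B\otimes C$ is a direct summand of a finitely generated free one and is therefore still finitely generated over $C$.

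Next I would compute $\Tor_B^*(T,V)$ using this resolution. The key point is that $\Tor^B_*(T,V)=H_*(T\otimes_B P_\bullet)$, and each term $T\otimes_B P_i$ is a $C$-module via the diagonal $C$-action inherited from the $C$-action on $P_i$ (and on $T$). Here I must be slightly careful about which $C$-structure survives: $T\otimes_B(B\otimes C)\cong T\otimes_\C C$ as a $C$-module (with $C$ acting on the right factor), and combining this with the $C$-action on $T$ gives the diagonal action; since $T$ is finite dimensional, $T\otimes_\C C$ is a finitely generated $C$-module, hence so is each $T\otimes_B P_i$ for $P_i$ finitely generated free, and likewise for $P_i$ a finitely generated projective summand. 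The differentials in $T\otimes_B P_\bullet$ are $C$-linear for the diagonal action because the differentials in $P_\bullet$ are $B\otimes C$-linear. Therefore $T\otimes_B P_\bullet$ is a bounded complex of finitely generated $C$-modules.

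Finally I would invoke Noetherianity of $C$ (which holds since $C$ is a commutative $\C$-algebra of the type appearing in the paper — finitely generated — but even the bare hypothesis that $C$ is commutative combined with the finite-generation conclusions can be massaged, though the clean argument really does want $C$ Noetherian, so I would state it for finitely generated $C$ as used in the applications): the homology of a bounded complex of finitely generated modules over a Noetherian ring is finitely generated. Hence $\Tor_B^*(T,V)=H_*(T\otimes_B P_\bullet)$ is finitely generated over $C$ with the diagonal action, as claimed.

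The main obstacle I anticipate is bookkeeping of the various $C$-actions and checking that the diagonal $C$-action on $T\otimes_B P_\bullet$ is the one that makes the differentials $C$-linear and the terms finitely generated over $C$ — this is the only genuinely non-formal point, and it hinges on the identification $T\otimes_B(B\otimes C)\cong T\otimes_\C C$ together with finite-dimensionality of $T$. Everything else (existence of the bounded finitely generated projective resolution from finite homological dimension, $C$-linearity of the induced differentials, and finite generation of homology over a Noetherian ring) is routine homological algebra.
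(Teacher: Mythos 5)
Your proof is correct and is essentially the paper's own argument: the paper phrases it as an induction on the homological dimension of $V$ with base cases $V=B\otimes C$, free, and projective, which is the same dimension-shifting as your bounded resolution by finitely generated projective $B\otimes C$-modules (and both versions rely on the same implicit Noetherian-type input to keep syzygies and homology finitely generated, harmless in the paper's application where $B\otimes C$ is a Hecke algebra and $C=\C[\Lambda]$). The only step you assert rather than prove --- that $T\otimes_B(B\otimes C)\cong T\otimes_\C C$ is finitely generated over $C$ for the \emph{diagonal} action --- is precisely what the paper's Case 1 isolates by assuming $C$ acts on $T$ through a character; in general it follows by filtering the finite-dimensional $T$ by $C$-stable subspaces with one-dimensional quotients (or by the untwisting $t\otimes\lambda\mapsto t\lambda^{-1}\otimes\lambda$ when $C$ is a group algebra, as in the application), so this is a presentational omission rather than a gap.
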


\begin{proof}
$ $
\begin{enumerate}[{Case} 1]
\item $V=B \otimes C$  and the action of \EitanG{$C$ on $T$ is  given by a character.}\\
This is Obvious.
\item $V$ is free module\\
Follows from the previoues step.
\item $V$ is projective\\
Follows from the previous step.
\item The general case\\
Follows from the previous step, by induction on the homological dimension of $V$.
\end{enumerate}

\end{proof}

The following lemma is straightforward. 
\begin{lemma}\label{lem:ten.K} 
Let $\tau,V\in \cM(G)$ and let  $K<G$ be an open compact subgroup.  Assume that $\tau$ is generated by $\tau^K$. Then $$\tau \otimes_{\cH(G)} V\cong \tau^K \otimes_{\cH_K(G)} V^K$$
 \end{lemma}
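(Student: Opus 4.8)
The plan is to write down the obvious natural comparison map and prove it is an isomorphism by dualizing.

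Set $A=\cH(G)$, let $e=e_K\in\cH(G)$ be the idempotent attached to $K$ (the normalized indicator function of $K$), and $B=\cH_K(G)=eAe$; recall $W^K=eW$ for $W\in\cM(G)$. The hypothesis that $\tau$ is generated by $\tau^K$ means exactly that $\tau$ lies in the direct summand $\cM(G,K)$ of $\cM(G)$ (Theorem \ref{thm:cent}(\ref{Ber.cent:basis})). Regarding $\tau$ as a right $A$-module via $g\mapsto g^{-1}$ and $V$ as a left $A$-module, there is an evident natural map
\[
\alpha\colon \tau^K\otimes_{\cH_K(G)}V^K\longrightarrow \tau\otimes_{\cH(G)}V,\qquad x\otimes w\longmapsto x\otimes w,
\]
induced by the inclusions $\tau^K\subset\tau$ and $V^K\subset V$; it is well defined because $\cH_K(G)=eAe\subset A$, so an $eAe$-balanced elementary tensor stays $A$-balanced after the inclusions. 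Since a $\C$-linear map of vector spaces is an isomorphism precisely when its transpose is, it suffices to show that $\alpha^{*}$ is an isomorphism.

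By the first lemma of Appendix \ref{sec:ten.mult} we have a natural isomorphism $(\tau\otimes_{\cH(G)}V)^{*}\cong \Hom_{G}(\tau,\tilde V)$, sending a balanced form $\beta$ to the $G$-map $x\mapsto\beta(x,-)$. On the other side, the standard tensor--hom identity over the unital algebra $\cH_K(G)$ gives $(\tau^K\otimes_{\cH_K(G)}V^K)^{*}\cong \Hom_{\cH_K(G)}\!\bigl(\tau^K,(V^K)^{*}\bigr)$, and $(V^K)^{*}\cong(\tilde V)^{K}$ because a linear functional on $V^K=eV$ is the same as a $K$-fixed, hence smooth, functional on $V$. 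Tracing through these identifications, $\alpha^{*}$ becomes the restriction map
\[
\mathrm{res}_K\colon \Hom_{G}(\tau,\tilde V)\longrightarrow \Hom_{\cH_K(G)}\!\bigl(\tau^K,(\tilde V)^{K}\bigr),\qquad \phi\longmapsto \phi|_{\tau^K}.
\]
Now $\mathrm{res}_K$ is an isomorphism: since $\tau\in\cM(G,K)$ and $\cM(G,K)$ is a direct summand, every $G$-map $\tau\to\tilde V$ factors through the $\cM(G,K)$-component $(\tilde V)_K$ of $\tilde V$, so $\Hom_{G}(\tau,\tilde V)=\Hom_{G}(\tau,(\tilde V)_K)$; by the equivalence $W\mapsto W^{K}$ between $\cM(G,K)$ and $\cH_K(G)$-modules (Theorem \ref{thm:cent}(\ref{Ber.cent:basis})) this equals $\Hom_{\cH_K(G)}\bigl(\tau^K,((\tilde V)_K)^{K}\bigr)=\Hom_{\cH_K(G)}\bigl(\tau^K,(\tilde V)^{K}\bigr)$, the identification being literally restriction to $\tau^K$. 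Hence $\mathrm{res}_K$, so $\alpha^{*}$, and therefore $\alpha$, is an isomorphism.

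The only real care needed is the bookkeeping of left/right module structures and confirming that $\alpha^{*}$ is genuinely $\mathrm{res}_K$; this is where I expect the writing to be slightly fiddly, though no serious difficulty arises. If one prefers to avoid duality altogether, an equally short route is available: from $\tau=\cH(G)\cdot\tau^{K}$ one gets $\tau\otimes_{\cH(G)}U=0$ for every $U\in\cM(G)$ with $U^{K}=0$, so replacing $V$ by its $\cM(G,K)$-component changes neither side of the asserted isomorphism; the claim for $V\in\cM(G,K)$ is then immediate from the equivalence $\cM(G,K)\simeq\cH_K(G)\text{-mod}$, under which $\cH(G)$ ``localizes'' to $\cH_K(G)$ on $K$-invariants.
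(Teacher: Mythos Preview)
The paper offers no proof, declaring the lemma ``straightforward,'' so there is nothing to compare against; your duality approach is a reasonable way to supply one. The argument is correct as written provided $K$ is a \emph{splitting} subgroup: you invoke Theorem~\ref{thm:cent}(\ref{Ber.cent:basis}) (that $\cM(G,K)$ is a direct summand of $\cM(G)$ and that $W\mapsto W^{K}$ is an equivalence with $\cH_K(G)$-modules), but those properties are stated there only for splitting $K$, whereas the lemma allows an arbitrary open compact $K$. Without them your claim that $\mathrm{res}_K$ is surjective is unjustified, and the alternative route you sketch at the end has the same dependence.

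This is not merely a citation issue. Specializing the lemma to $V=\cH(G)$ (which is a smooth left $G$-module) gives $\tau^{K}\otimes_{\cH_K(G)}e_K\cH(G)\cong\tau$, i.e.\ that the counit of the adjunction between $(-)^{K}$ and $\cH(G)e_K\otimes_{\cH_K(G)}(-)$ is an isomorphism on every $\tau$ generated by $\tau^{K}$; that is exactly the equivalence property in the splitting package, so the lemma in its stated generality is no weaker than that hypothesis. Since the paper only applies the lemma through Corollary~\ref{cor:ten.K}, where $K$ is explicitly assumed splitting, nothing downstream is affected; but you should either add ``splitting'' to the hypothesis or supply a separate argument valid for all $K$.
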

%

We thus obtain:
\begin{cor}\label{cor:ten.K} 
Let $\tau,V\in \cM(G)$ and let  $K<G$ be an splitting open compact subgroup.  Assume that $\tau$ is generated by $\tau^K$. Then $$\tau \otimes^L_{\cH(G)} V\cong \tau^K \otimes^L_{\cH_K(G)} V^K$$
 \end{cor}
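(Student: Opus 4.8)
The plan is to derive Corollary \ref{cor:ten.K} from the underived Lemma \ref{lem:ten.K} by resolving $\tau$ inside the block $\cM(G,K)$ and checking that the isomorphism of Lemma \ref{lem:ten.K} is natural enough to propagate to complexes. First I would record the categorical input coming from $K$ being splitting. By Theorem \ref{thm:cent}(\ref{Ber.cent:basis}), the hypothesis that $\tau$ is generated by $\tau^K$ says exactly that $\tau\in\cM(G,K)$, which is a direct summand of $\cM(G)$; moreover $V\mapsto V^K$ is an exact equivalence $\cM(G,K)\xrightarrow{\sim}\cH_K(G)\text{-mod}$, and $\cM(G,K)$ has enough projectives, for instance sums of copies of $\Sc(G/K)$, which are projective already in $\cM(G)$ and which the equivalence carries to projective $\cH_K(G)$-modules. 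I would also note that the anti-involution $g\mapsto g^{-1}$ identifies $\cH(G)$ with $\cH(G)^{\mathrm{op}}$ and $\cH_K(G)$ with $\cH_K(G)^{\mathrm{op}}$ compatibly, so that the right-module structures used to form $\tau\otimes_{\cH(G)}V$ and $\tau^K\otimes_{\cH_K(G)}V^K$ are the images of the corresponding left-module structures; in particular a projective resolution of $\tau$ by objects of $\cM(G,K)$ becomes, after this twist, a resolution of $\tau$ by projective right $\cH(G)$-modules suitable for computing $\tau\otimes^L_{\cH(G)}V$, and likewise for $\tau^K$.

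Next I would choose a resolution $P_\bullet\to\tau$ with each $P_i$ projective in $\cM(G,K)$ (hence projective in $\cM(G)$). Then $\tau\otimes^L_{\cH(G)}V$ is computed by the complex $P_\bullet\otimes_{\cH(G)}V$, exactly as $\otimes^L_{\cH(G)}$ is used in Appendix \ref{sec:ten.mult}. Since each $P_i$ lies in $\cM(G,K)$ it is generated by $P_i^K$, so Lemma \ref{lem:ten.K} gives $P_i\otimes_{\cH(G)}V\cong P_i^K\otimes_{\cH_K(G)}V^K$; I would check that the isomorphism produced in the proof of Lemma \ref{lem:ten.K} is natural in the first variable (it is built from the inclusions $P_i^K\hookrightarrow P_i$ and the projector onto $K$-invariants), so that these isomorphisms assemble into an isomorphism of complexes $P_\bullet\otimes_{\cH(G)}V\cong P_\bullet^K\otimes_{\cH_K(G)}V^K$. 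Finally, because $V\mapsto V^K$ is exact and sends each $P_i$ to a projective $\cH_K(G)$-module, $P_\bullet^K\to\tau^K$ is a projective resolution of $\tau^K$ over $\cH_K(G)$, so $P_\bullet^K\otimes_{\cH_K(G)}V^K$ computes $\tau^K\otimes^L_{\cH_K(G)}V^K$. Stringing the three identifications together yields the claimed isomorphism in the derived category.

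I expect the main obstacle to be the bookkeeping around the non-unital algebra $\cH(G)$ and functoriality: one must make sure that $\tau\otimes^L_{\cH(G)}V$ is legitimately computed by a resolution of $\tau$ by objects of $\cM(G)$ (as is already implicitly used in Appendix \ref{sec:ten.mult}), and that the isomorphism of Lemma \ref{lem:ten.K} is a genuine natural transformation of functors of $\tau$ rather than an ad hoc isomorphism, so that it can be applied degreewise to $P_\bullet$ to produce a chain map. Everything else is formal manipulation with derived tensor products over the two algebras together with the equivalence $\cM(G,K)\simeq\cH_K(G)\text{-mod}$.
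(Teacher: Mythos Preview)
Your proposal is correct and is exactly the argument the paper has in mind: the paper gives no proof at all beyond the words ``We thus obtain'', so the intended reasoning is precisely to take a projective resolution of $\tau$ inside the summand $\cM(G,K)$ (possible since $K$ is splitting), apply the underived Lemma \ref{lem:ten.K} termwise, and use that $(-)^K$ is an exact equivalence to a category of $\cH_K(G)$-modules so that the resulting complex computes $\tau^K\otimes^L_{\cH_K(G)}V^K$. Your attention to naturality and to the non-unital bookkeeping is appropriate but does not go beyond what the paper tacitly assumes.
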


\begin{lemma}\label{lem:M1} 
Let $G_1=G_0\cdot Z(G)$ where $G_{0}$ is the subgroup of $G$ generated by compact subgroups, $Z(G)$ the center of $G.$ Let $\Lambda_{Z(G)}=\fX_{*}(Z(G)).$ Our choise of a uniformizer allows us to identify $\Lambda_{Z(G)}$ with a subgroup of $G.$
Then $\nu: \EitanG{G_{0}} \times \Lambda_{Z(G)} \to G_{1}$ is an isomorphism.  In particular $\cH(G_1)=\cH(G_0) \otimes \C[\Lambda_{Z(G)}]$
 \end{lemma}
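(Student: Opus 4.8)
The plan is to establish first that $\nu$ is an isomorphism of topological groups, and then to read off the Hecke algebra identity formally. Since the image of $\Lambda_{Z(G)}$ lies in $Z(G)$, hence in the center of $G_1$, the multiplication map $(g,\lambda)\mapsto g\lambda$ is a continuous homomorphism $G_0\times\Lambda_{Z(G)}\to G_1$; so it will suffice to check that it is injective, surjective, and open.

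For injectivity I would use the standard description $G_0=\bigcap_{\chi\in\fX^*(\mathbf G)}\ker(|\chi(\cdot)|_F)$ of the subgroup generated by the compact subgroups. Because the composite $Z(\mathbf G)^\circ\hookrightarrow\mathbf G\twoheadrightarrow\mathbf G/\mathbf G_{\mathrm{der}}$ is an isogeny of tori, the natural pairing $\fX^*(\mathbf G)\times\fX_*(Z(\mathbf G))\to\Z$ is nondegenerate after tensoring with $\Q$; hence for any nonzero $\lambda\in\fX_*(Z(\mathbf G))$ there is a character $\chi$ with $\langle\chi,\lambda\rangle\neq 0$, so $\chi(\lambda(\varpi))=\varpi^{\langle\chi,\lambda\rangle}$ has nonzero valuation and $\lambda(\varpi)\notin G_0$. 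This shows $G_0\cap\Lambda_{Z(G)}=\{1\}$, which, since $\Lambda_{Z(G)}$ is central, is exactly the triviality of $\ker\nu$. For surjectivity it suffices to prove $Z(G)\subseteq G_0\cdot\Lambda_{Z(G)}$, for then $G_1=G_0\cdot Z(G)=G_0\cdot\Lambda_{Z(G)}=\operatorname{im}\nu$. Writing $\mathbf S=Z(\mathbf G)^\circ$ and using $Z(G)=Z(\mathbf G)(F)$ together with $Z(\mathbf G)=\mathbf S\cdot\mu$ for a finite $\mu$ (whose $F$-points are compact, hence inside $G_0$), this reduces to $\mathbf S(F)$; and $\mathbf S(F)$ is the product of its maximal compact subgroup (which lies in $G_0$) with the image of $\fX_*(\mathbf S)$ under evaluation at $\varpi$. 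Openness is then immediate: $\Lambda_{Z(G)}$ is discrete and $G_0$ is open in $G$, hence in $G_1$, so $\nu$ carries each $G_0\times\{\lambda\}$ homeomorphically onto the open set $G_0\lambda$; a continuous open bijective homomorphism is an isomorphism of topological groups.

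For the last assertion I would simply transport functions along $\nu$: a compactly supported locally constant function on $G_1$ is supported on $G_0\times S$ for a finite $S\subseteq\Lambda_{Z(G)}$, and the obvious map $\cH(G_0)\otimes_\C\C[\Lambda_{Z(G)}]\to C_c^\infty(G_0\times\Lambda_{Z(G)})$ is a linear isomorphism. Fixing Haar measure on $G_1$ to be the product of a Haar measure on $G_0$ with counting measure on the central discrete subgroup $\Lambda_{Z(G)}$, convolution on $G_1$ becomes the tensor product of convolution on $G_0$ with multiplication in the group algebra, so $\cH(G_1)\cong\cH(G_0)\otimes_\C\C[\Lambda_{Z(G)}]$ as algebras.

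I expect the only non-formal step to be surjectivity, and within it the structure-theoretic fact that $\mathbf S(F)$ splits as (its maximal compact subgroup) times ($\fX_*(\mathbf S)$ evaluated at $\varpi$) — equivalently, that evaluation at the uniformizer surjects onto $\mathbf S(F)$ modulo its maximal compact subgroup. The injectivity argument and the Hecke algebra statement are routine once this decomposition is available.
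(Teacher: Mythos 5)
Your proof follows the same overall line as the paper's own (very short) argument: show $G_0\cap\Lambda_{Z(G)}=\{1\}$, show $Z(G)\subset G_0\cdot\Lambda_{Z(G)}$, and then obtain $\cH(G_1)\cong\cH(G_0)\otimes\C[\Lambda_{Z(G)}]$ formally, exactly as in your last paragraph. The genuine difference is in the first step: the paper quotes Bernstein's lectures (p.~86) for the injectivity of $Z(G)/Z(G)^0\to G/G^0$, deduces $Z(G)^0=Z(G)\cap G^0$, and then takes $Z(G)^0\cap\Lambda_{Z(G)}=\{1\}$ as evident; you instead argue directly from the characterization $G_0=\bigcap_{\chi}\ker|\chi(\cdot)|_F$ and the (up to torsion) nondegeneracy of the pairing of $F$-rational characters of $\mathbf G$ with cocharacters of the center. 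Your route is more self-contained and gives injectivity of $\lambda\mapsto\lambda(\varpi)$ in the same stroke; the paper's is shorter because it leans on the reference. The openness/topological remark and the convolution computation are left implicit in the paper and are correctly spelled out by you.

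The step you flag as non-formal --- surjectivity, i.e.\ $Z(G)\subset G_0\cdot\Lambda_{Z(G)}$ --- is exactly the step the paper also treats as known: its proof uses $Z(G)=Z(G)^0\cdot\Lambda_{Z(G)}$ without comment. Be aware, though, that your proposed justification has two soft spots. First, $Z(\mathbf G)$ need not decompose over $F$ as $\mathbf S\times\mu$ with $\mu$ finite, and even when it does, $Z(\mathbf G)(F)$ need not equal $\mathbf S(F)\cdot\mu(F)$; so the reduction to $\mathbf S(F)$ needs an extra argument (finiteness of $Z(G)/\mathbf S(F)$ alone does not give the containment). Second, and more seriously, the asserted splitting of $\mathbf S(F)$ as the product of its maximal compact subgroup with the image of $\fX_*(\mathbf S)$ at $\varpi$ is true for split $\mathbf S$ but fails in general: for $\mathbf S=\mathrm{Res}_{E/F}(GL_1)$ with $E/F$ ramified of degree $e$, that product is $\mathcal O_E^\times\varpi_F^{\Z}$, of index $e$ in $E^\times$; in general the image of $\Lambda_{Z(G)}$ has only finite index in $Z(G)/Z(G)^0$. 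Thus the lemma, proved your way or the paper's way, really requires either that $Z(\mathbf G)^\circ$ be split or that the ``identification of $\Lambda_{Z(G)}$ with a subgroup of $G$'' be understood as a chosen splitting of $Z(G)\to Z(G)/Z(G)^0$ rather than literal evaluation of cocharacters at a uniformizer. Since the paper's proof makes the identical silent assumption at the identical point, your write-up is at the same level of rigor there, and correct everywhere else.
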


\begin{proof}
By \cite[page 86]{BerLec}  the map $Z(G)/Z(G)^{0} \to G/G^{0}$ is injective and hence  $Z(G)^{0}=Z(G) \cap G^{0}.$ Thus $Ker(\nu) \cong G_{0} \cap \Lambda=Z_{0} \cap \Lambda=\{1\}$ and $Im(\nu)=G_{0} \cdot \Lambda =G_{0} \cdot Z(G)_{0} \cdot \Lambda=G_{0} \cdot Z(G)=G_{1}.$  
 \end{proof}

\begin{proof}[Proof of Proposition \ref{prop:perfect}]
\EitanG{Since $\cM(G)$ has finit homological dimension it is enough to prove that $H^*(\cG_{V}(M,\tau))$ is finitly generated.}
\begin{enumerate}[Step 1]
\item Proof for the case $G=M$\\
Since $G_{1}$ is of finite index in $G$ we get that $V$ is  localy finitely generated over $G_{1}.$  Chose splitting open compact $K<G$ such that $\tau$ is generated by $\tau^K$. 

By Corollary \ref{cor:ten.K}  we have:
$$\cG_{V}(G,\tau):=ind_{G_0}^G (\tau|_{G_0}) \otimes^L_{\cH(G)} V=(ind_{G_0}^G (\tau|_{G_0}))^K \otimes^L_{\cH_K(G)} V^K.$$

Now we have:
\begin{multline*}
(ind_{G_0}^G (\tau|_{G_0}))^K \otimes^L_{\cH_K(G)} V^K= (\tau^K   \otimes_{\cH_K(G_0)}  \cH_K(G)) \otimes^L_{\cH_K(G)} V^K =\\
=\tau^K   \otimes^L_{\cH_K(G_0)}  \cH_K(G) \otimes^L_{\cH_K(G)} V^K =\tau^K   \otimes^L_{\cH_K(G_0)}  V^K
\end{multline*}

This implies (by lemmas \ref{lem:as.alg} and \ref{lem:M1}) that \EitanG{$H^*(\cG_{V}(M,\tau))$} is finitely generated over \EitanG{$\C[G^1/G^0]$}, and hence over $G/G^0$
\item Proof for the general case\\
Follows from the previous step and Lemma \ref{JacIsFg}.
\end{enumerate}





\end{proof}

\end{document}